\DeclareMathOperator{\Ad}{Ad}
\newlength\tindent
\newtheorem{theorem}{Theorem}[section]
\newtheorem{lemma}[theorem]{Lemma}
\newtheorem{proposition}[theorem]{Proposition}
\newtheorem{corollary}[theorem]{Corollary}
\theoremstyle{definition}
\newtheorem{definition}[theorem]{Definition}
\newtheorem{example}[theorem]{Example}
\theoremstyle{remark}
\newtheorem{remark}[theorem]{Remark}
\numberwithin{equation}{section}
\begin{document}
\setcounter{page}{1}

\title[Equigeodesics vectors on homogeneous spaces]{Equigeodesic vectors on compact homogeneous spaces with equivalent isotropy summands}

\author[B. Grajales]{Brian Grajales}
\address{Brian Grajales \endgraf
IMECC-Unicamp, Departamento de Matemática, Rua Sérgio Buarque de Holanda \endgraf
651, Cidade Universitária Zeferino Vaz. 13083-859, Campinas - SP, Brazil
\endgraf
  {\it E-mail address} {\rm grajales@ime.unicamp.br}
  }

\author[L. Grama]{Lino Grama}
\address{Lino Grama \endgraf
IMECC-Unicamp, Departamento de Matem\'{a}tica. Rua S\'{e}rgio Buarque de Holanda \endgraf 651, Cidade Universit\'{a}ria Zeferino Vaz. 13083-859, Campinas - SP, Brazil.
\endgraf
{\it E-mail address} {\rm lino@ime.unicamp.br}
  }

%\thanks{B. Grajales is supported has been partially supported by  Universidad de Pamplona. %J. Delgado is also supported by Vice. Inv.Universidad del Valle Grant CI 71329, MathAmSud and Minciencias-Colombia under the project MATHAMSUD21-MATH-03. 
%M. Ruzhansky is also supported  by EPSRC grant 
%EP/R003025/2.
%}

     \keywords{Equigeodesic, Invariant Geometry, Homogeneous space}
     \subjclass[2020]{53C30, 53C22}

\begin{abstract} In this paper, we investigate equigeodesics on a compact homogeneous space $M=G/H.$ We introduce a formula for the identification of equigeodesic vectors only relying on the isotropy representation of $M$ and the Lie structure of the Lie algebra of $G$. Applications to $M$-spaces are also discussed.
\end{abstract} 

\maketitle

\tableofcontents
\allowdisplaybreaks

\section{Introduction}

Homogeneous spaces have long been a captivating realm within the field of pure mathematics, offering a rich terrain for exploring the interplay between geometry, topology, and group theory. These spaces, equipped with natural group actions, have attracted the attention of mathematicians for their inherent symmetry and elegant mathematical structure. 
The study of differential geometry on homogeneous spaces is a classic topic in Differential Geometry with recent developments. See for instance \cite{agrico,ABD,AD,conti,FGV,FMR,FT,FTo,GGN,sn} and references therein.\\
%\cite{agrico}, \cite{ABD}, \cite{AD}, \cite{conti},  \cite{FGV}, \cite{FMR}, \cite{FT}, \cite{FTo}, \cite{GGN}, \cite{sn} and references therein. \\

Among the myriad of questions that arise in this context, the study of geodesics stands out as a central and challenging problem. Geodesics, as the shortest paths connecting points on a manifold, have played a fundamental role in the development of modern differential geometry. Understanding geodesics on homogeneous spaces not only extends our knowledge of geometric structures but also provides insights into various applied fields, including physics, computer graphics, and robotics. The intrinsic beauty and significance of this topic have motivated mathematicians to delve deep into the intricate world of geodesics on homogeneous spaces.\\

In this work, we are interested in investigating a specific class of geodesics on homogeneous spaces known as {\it homogeneous equigeodesics}. A homogeneous equigeodesic on a homogeneous space $G/H$ is a homogeneous curve $\gamma$ through the point $o=eH$, meaning $$\gamma(t) = \exp(tX)\cdot o,$$ such that $\gamma$ is a geodesic with respect to every invariant metric on $G/H$. The vector $ X\in T_o(G/H)$ is referred to as the {\it equigeodesic vector}. The notion of equigeodesic is important for mechanics since homogeneous geodesics are described relative equilibrium of the dynamical system, represented by an invariant metric and we consider the case when this property is stable under change of the metric.\\

The concept of homogeneous equigeodesics was initially introduced in \cite{CGN}, where the authors explored them within the context of generalized flag manifolds. They demonstrated the existence of these equigeodesics within generalized flag manifolds of type $A_l$. Several authors have contributed to the realm of equigeodesics within flag manifolds. For instance, in \cite{GN}, and \cite{X}, the authors focus on the study of equigeodesics on generalized flag manifolds with two, and four isotropy summands, respectively, in \cite{S2} are investigated equigeodesics on flag manifolds with $G_2$-type $\mathfrak{t}$-roots, and, in \cite{WZ}, the authors examine the existence and properties of equigeodesics in flag manifolds where the second Betti number $b_2(G/K) = 1.$ For other homogeneous spaces, it is noteworthy to mention the works of Statha \cite{S}, which includes a characterization of algebraic equigeodesics on some homogeneous spaces, such as Stiefel manifolds, generalized Wallach spaces, and some spheres, and Xu and Tan \cite{XT}, who have extended the concept of homogeneous equigeodesics to the context of homogeneous Finsler spaces, expanding the scope of this field of study.\\

In this paper, we embark on a comprehensive exploration of equigeodesic vectors within the general context of a compact homogeneous space. In Section \ref{Preliminaries}, we carefully detail the description of invariant metrics on homogeneous spaces. While such a description has appeared in the literature in various forms, we take a distinctive approach by providing a detailed exposition. This effort is not merely an exercise in redundancy but serves to establish a robust and consistent notation framework that supports our subsequent analyses. It is worth to point out that our framework is highly versatile, encompassing, for instance, homogeneous spaces where the isotropy representation has equivalent components. This general framework unifies terminology and extends the scope of several prior works.\\

One of the key achievements of this paper is the introduction of a formula that facilitates the identification of equigeodesic vectors without relying on the Riemannian invariant metrics (Theorem \ref{main:2}). This innovation holds immense importance as it simplifies the process of identifying equigeodesic vectors, making it more efficient for researchers in this field. This achievement is obtained through a careful analysis of both the isotropy representation and the underlying Lie theoretical structure within the homogeneous space.\\

In Section \ref{section:3}, we present a comprehensive overview of the general and established theory of homogeneous geodesics within homogeneous spaces and prove our main result (Theorem \ref{main:2}): a necessary condition for a vector to be equigeodesic. This condition provides an effective tool for finding equigeodesics on a compact homogeneous space. Furthermore, as demonstrated in Theorem \ref{main:3}, it serves as a definitive characterization for those homogeneous spaces where each isotropy summand with multiplicity greater than one is orthogonal. Finally, Section \ref{applications} is dedicated to the study of equigeodesics on $M$-spaces. We apply the results obtained in Section \ref{section:3} to give a description of equigeodesic vectors for these spaces.

\section{Invariant Metrics on Reductive Homogeneous Spaces}\label{Preliminaries}
Let $M$ be a differentiable manifold and $G$ a Lie group. We say that $M$ is a homogeneous $G$-space (or simply, homogeneous space)  if a smooth transitive action $\phi: G\times M\rightarrow M$ exists. Given $a\in G$ and $p\in M$, we will denote $a\cdot p=\phi(a,p)$ and define the maps \begin{center}
$\begin{array}{rccc}
\phi_a: & M & \longrightarrow & M\\
& x & \longmapsto 
 & a\cdot x
\end{array}$\  and \  $\begin{array}{rccc}
\varphi_p: & G & \longrightarrow & M\\
& b & \longmapsto 
 & b\cdot p
\end{array}.$
\end{center}

\begin{proposition}\label{diff:submersion} For every $a\in G$ and $p\in M$, $\phi_a$ is a diffeomorphism and $\varphi_p$ is a surjective submersion. 
\end{proposition}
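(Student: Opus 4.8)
The plan is to establish the two assertions separately, relying on the transitivity of the action and elementary properties of Lie group actions. For the first claim, observe that $\phi_a \colon M \to M$ is smooth by hypothesis (as a restriction/composition of the smooth action $\phi$), and that $\phi_{a^{-1}}$ is also smooth; since $\phi_{a^{-1}} \circ \phi_a = \phi_e = \mathrm{id}_M$ and likewise $\phi_a \circ \phi_{a^{-1}} = \mathrm{id}_M$ (this follows from the action axioms $\phi(ab,p)=\phi(a,\phi(b,p))$ and $\phi(e,p)=p$), the map $\phi_a$ is a diffeomorphism with smooth inverse $\phi_{a^{-1}}$. This part is immediate and presents no obstacle.

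For the second claim, that $\varphi_p \colon G \to M$, $b \mapsto b\cdot p$, is a surjective submersion: surjectivity is precisely the transitivity of the action, so the content is the submersion property, i.e. that $(d\varphi_p)_b \colon T_bG \to T_{b\cdot p}M$ is surjective for every $b\in G$. First I would reduce to the case $b=e$: from the relation $\varphi_p = \phi_b \circ \varphi_p \circ L_{b^{-1}}$ (where $L_{b^{-1}}$ is left translation on $G$), differentiating and using that $\phi_b$ and $L_{b^{-1}}$ are diffeomorphisms shows that $(d\varphi_p)_b$ is surjective if and only if $(d\varphi_p)_e$ is. So it suffices to show $(d\varphi_p)_e \colon \mathfrak{g} = T_eG \to T_pM$ is surjective.

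To see that $(d\varphi_p)_e$ is surjective, I would use the orbit map together with the fact that the action is transitive, so the orbit $G\cdot p$ is all of $M$. The standard argument: the image of $\varphi_p$ is the orbit, which is an immersed submanifold whose tangent space at $p$ is exactly $\mathrm{im}\,(d\varphi_p)_e$; since the orbit equals $M$, a dimension count (or the constant-rank theorem applied to $\varphi_p$, whose rank is constant by the homogeneity argument above) forces $(d\varphi_p)_e$ to have rank $\dim M$, hence to be surjective. Alternatively, and perhaps more cleanly for a self-contained exposition, one invokes that a transitive smooth action of a Lie group on a manifold is automatically a submersion onto the manifold — a consequence of Sard's theorem (the image of $\varphi_p$ has measure zero if $\varphi_p$ is nowhere a submersion, contradicting surjectivity) combined with the constant-rank reduction.

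The main obstacle is this last point: surjectivity of a smooth map does not in general imply it is a submersion, so one genuinely needs the homogeneity (constant rank via the $\phi_b$-equivariance) plus a measure-theoretic input such as Sard's theorem, or else the structure theory identifying the orbit $G\cdot p$ with $G/G_p$ as a smooth manifold. I would present the constant-rank-plus-Sard route, since it is short and uses only tools available at this stage, and then note that $\varphi_p$ descends to the familiar diffeomorphism $G/G_p \cong M$, which motivates the reductive setup in the rest of the section.
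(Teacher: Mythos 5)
Your proposal is correct and follows essentially the same route as the paper: $\phi_a$ is a diffeomorphism with inverse $\phi_{a^{-1}}$, and the equivariance relation $\phi_{a^{-1}}\circ\varphi_p\circ L_a=\varphi_p$ gives constant rank, which together with surjectivity (transitivity of the action) yields the submersion property. You are in fact slightly more careful than the paper, which passes from ``constant rank and surjective'' to ``submersion'' without comment, whereas you correctly flag that this step needs the global rank theorem (via Sard's theorem or the orbit structure theory).
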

\begin{proof} Since the action $\phi$ is smooth, then $\phi_a$ and $\varphi_p$ are smooth for each $a\in G$ and $p\in M$. Now, observe that $\phi_a\circ\phi_{a^{-1}}=\phi_{a^{-1}}\circ\phi_{a}=\textnormal{I}_M$, so $\phi_a$ is a diffeomorphism. Before proving that $\varphi_p$ is a surjective submersion, let us define the isotropy subgroup of an element $p\in M$ as $G_p:=\{a\in G:a\cdot p=p\}$ and denote by $L_a:G\rightarrow G;\ b\mapsto ab$ the left translation with respect to $a.$ Observe that $L_a$ is a diffeomorphism of $G$ and that for every $a\in G$ we have
\begin{align*}
    [\phi_{a^{-1}}\circ\varphi_p\circ L_{a}](b)=&[\phi_{a^{-1}}\circ\varphi_p](ab)\\
    =&\phi_{a^{-1}}((ab)\cdot p)\\
    =&a^{-1}\cdot[(ab)\cdot p]\\
    =&(a^{-1}ab)\cdot p\\
    =&b\cdot p\\
    =&\varphi_p(b),
\end{align*}
thus, $\phi_{a^{-1}}\circ\varphi_p\circ L_{a}=\varphi_p,\ \forall a\in G,$ so that $(d\phi_a)_{a\cdot p}\circ(d\varphi_p)_a\circ(dL_a)_e=(d\varphi_p)_e.$ Since $(d\phi_a)_{a\cdot p}$ and $(dL_a)_e$ are linear isomorphisms, then $$\textnormal{rank}\ (d\varphi_p)_a=\textnormal{rank}\ (d\varphi_p)_e$$
for all $a\in G.$ This means that $\varphi_p$ is a smooth map with constant rank and it is surjective because of the transitivity of the action $\phi$. Hence, $\varphi_p$ is a surjective submersion.
\end{proof}
Fix an element $o\in M$ and denote by $H=\{a\in G:a\cdot o=o\}$ the corresponding isotropy subgroup.  A homogeneous $G$-space $M$ can be identified with the quotient $G/H$ since the map 
\begin{equation*}
    \begin{array}{rccc}
    \Psi: & M & \longrightarrow & G/H\\
          & a\cdot o & \longmapsto & aH
    \end{array}
\end{equation*}
is a diffeomorphism. The isotropy representation of $M$ with respect to $o$ is the group homomorphism $j:H\longrightarrow \textnormal{GL}(T_oM),\ j(h):=(d\phi_h)_o,$ where $T_oM$ is the tangent space of $M$ at $o$, $\textnormal{GL}(T_oM)$ is the general linear group of $T_oM$ and $(d\phi_h)_o$ is the derivative of the map $\phi_h$ at $o.$ 

\begin{remark} We know that $(d\phi_h)_o$ is an automorphism of the linear space $T_oM$ because $\phi_a$ is a diffeomorphism for each $a\in G$ and, in particular, for each $h\in H.$ 
\end{remark}

Denote the Lie algebras of $G$ and $H$ by $\mathfrak{g}$ and $\mathfrak{h}$ respectively. We say that the homogeneous $G$-space $M$ is reductive if there exists a linear subspace $\mathfrak{m}$ of $\mathfrak{g}$ such that $\mathfrak{g}=\mathfrak{h}\oplus\mathfrak{m}$ and $\textnormal{Ad}(h)\mathfrak{m}=\mathfrak{m}$ for all $h\in H,$ where $\textnormal{Ad}$ is the adjoint representation of the Lie group $G.$ We also will say that $\mathfrak{g}=\mathfrak{h}\oplus\mathfrak{m}$ is a reductive decomposition of the Lie algebra $\mathfrak{g}.$ 

\begin{remark} If $\mathfrak{g}=\mathfrak{h}\oplus \mathfrak{m}$ is a reductive decomposition of the Lie algebra $\mathfrak{g}$ of $G,$ then the condition $\textnormal{Ad}(h)\mathfrak{m}=\mathfrak{m}$ implies that the map
$$\textnormal{Ad}^H\big{|}_{\mathfrak{m}}:H\longrightarrow \textnormal{GL}(\mathfrak{m});\ h\mapsto \textnormal{Ad}(h)\big{|}_\mathfrak{m}:\mathfrak{m}\rightarrow \mathfrak{m}$$
is a well-defined representation of $H$ in $\mathfrak{m}.$
\end{remark}
\begin{proposition}\label{Ad:j:equivalence}
    Let $M$ be a reductive homogeneous $G$-space, $o\in M$ and $H$ the isotropy subgroup of $o$. If $\mathfrak{g}=\mathfrak{h}\oplus \mathfrak{m}$ is a reductive decomposition of the Lie algebra $\mathfrak{g}$ of $G,$ then the representation $\textnormal{Ad}^H\big{|}_{\mathfrak{m}}$ is equivalent to the isotropy representation $j$ of $M$ with respect to $o.$
\end{proposition}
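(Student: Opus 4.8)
The plan is to exhibit an explicit linear isomorphism $T\colon\mathfrak{m}\to T_oM$ intertwining the two representations, taking for $T$ the restriction to $\mathfrak{m}$ of the differential $(d\varphi_o)_e\colon\mathfrak{g}\to T_oM$ of the orbit map $\varphi_o\colon G\to M$, $b\mapsto b\cdot o$.

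First I would verify that $T:=(d\varphi_o)_e\big|_{\mathfrak{m}}$ is a linear isomorphism onto $T_oM$. By Proposition \ref{diff:submersion}, $\varphi_o$ is a surjective submersion, so $(d\varphi_o)_e$ maps $\mathfrak{g}$ onto $T_oM$; in particular $\dim\ker(d\varphi_o)_e=\dim\mathfrak{g}-\dim T_oM=\dim\mathfrak{g}-\dim\mathfrak{h}=\dim\mathfrak{m}$, where I use that $\dim M=\dim G/H$. Since $\varphi_o$ is constant equal to $o$ on $H$, the subspace $\mathfrak{h}=T_eH$ lies in $\ker(d\varphi_o)_e$, and the dimension count just made forces $\ker(d\varphi_o)_e=\mathfrak{h}$. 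Hence, by the direct sum $\mathfrak{g}=\mathfrak{h}\oplus\mathfrak{m}$, the restriction $T$ is injective, and by equality of dimensions it is a bijection onto $T_oM$.

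Next comes the equivariance, which is the heart of the argument. At the group level I would establish the identity $\varphi_o\circ C_h=\phi_h\circ\varphi_o$ for every $h\in H$, where $C_h\colon G\to G$, $b\mapsto hbh^{-1}$, is conjugation by $h$: indeed, using $h^{-1}\cdot o=o$,
\[
\varphi_o(C_h(b))=(hbh^{-1})\cdot o=h\cdot\big(b\cdot(h^{-1}\cdot o)\big)=h\cdot(b\cdot o)=\phi_h(\varphi_o(b)).
\]
Differentiating this identity at $e$, and recalling that $(dC_h)_e=\textnormal{Ad}(h)$ and $(d\phi_h)_o=j(h)$, I obtain $(d\varphi_o)_e\circ\textnormal{Ad}(h)=j(h)\circ(d\varphi_o)_e$ on all of $\mathfrak{g}$. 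Restricting to $\mathfrak{m}$ and using $\textnormal{Ad}(h)\mathfrak{m}=\mathfrak{m}$ gives $T\circ\big(\textnormal{Ad}^H\big|_{\mathfrak{m}}(h)\big)=j(h)\circ T$ for all $h\in H$, which is exactly the statement that $T$ is an equivalence between $\textnormal{Ad}^H\big|_{\mathfrak{m}}$ and $j$.

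The only point that requires genuine care is the identification $\ker(d\varphi_o)_e=\mathfrak{h}$, since the rest reduces to differentiating the structural maps $\varphi_o$, $\phi_h$, $C_h$. I do not expect a real obstacle here: this is the classical identification of the isotropy representation with the adjoint action on a reductive complement, and the work consists mainly in phrasing it within the notation fixed above.
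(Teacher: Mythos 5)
Your proposal is correct and follows essentially the same route as the paper: both take the intertwining isomorphism to be $(d\varphi_o)_e\big{|}_{\mathfrak{m}}$, establish that it is a linear isomorphism using the submersion property of $\varphi_o$ together with $\mathfrak{h}\subseteq\ker(d\varphi_o)_e$ and a dimension count, and derive equivariance by differentiating the identity $\varphi_o\circ C_h=\phi_h\circ\varphi_o$. The only cosmetic difference is that you pin down $\ker(d\varphi_o)_e=\mathfrak{h}$ exactly, whereas the paper argues directly that the restriction to $\mathfrak{m}$ is onto and then compares dimensions; the two are interchangeable.
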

\begin{proof} We shall prove that the restriction map $(d\varphi_o)_e\big{|}_{\mathfrak{m}}:\mathfrak{m}\rightarrow T_o M$, where $(d\varphi_o)_e$ is the derivative of $\varphi_o$ at the identity element $e\in G,$ is an intertwining isomorphism between $\textnormal{Ad}^H\big{|}_{\mathfrak{m}}$ and $j.$ First, observe that for every $X\in\mathfrak{g}$ we have that
\begin{align*}
    (d\varphi_o)_e(X)=& (d\varphi_o)_e\left(\left.\frac{d}{dt}\exp(tX)\right|_{t=0}\right)\\
    \\
    =& \left.\frac{d}{dt}\varphi_o\left(\exp(tX)\right)\right|_{t=0}\\
    \\
    =&\left.\frac{d}{dt}\left[\exp(tX)\cdot o\right]\right|_{t=0},
    \end{align*}
in particular, if $X\in\mathfrak{h}$, then $\exp(tX)\in H$ and $$(d\varphi_o)_e(X)=\left.\frac{d}{dt}[\exp(tX)\cdot o]\right|_{t=0}=\left.\frac{d}{dt}o\right|_{t=0}=0,$$
thus $\mathfrak{h}\subseteq\textnormal{ker}(d\varphi_o)_e$ and, since $\mathfrak{g}=\mathfrak{h}\oplus\mathfrak{m}$, then $\textnormal{Im} (d\varphi_o)_e=\textnormal{Im}(d\varphi_o)_e\big{|}_{\mathfrak{m}}.$ On the other hand, we already know that $\varphi_o$ is a submersion (see Proposition \ref{diff:submersion}), so $$\textnormal{Im}(d\varphi_o)_e\big{|}_{\mathfrak{m}}=\textnormal{Im}(d\varphi_o)_e= T_oM.$$ 
This proves that $(d\varphi_o)_e\big{|}_{\mathfrak{m}}$ is onto, but
$$\dim \mathfrak{m}=\dim\mathfrak{g}-\dim\mathfrak{h}=\dim (G/H)=\dim M=\dim T_o M,$$ so  $(d\varphi_o)_e\big{|}_{\mathfrak{m}}:\mathfrak{m}\rightarrow T_o M$ is a linear isomorphism. Additionally, observe that for each $h\in H$ and for all $b\in G,$
\begin{align*}
    (\varphi_o\circ C_h)(b)=&\varphi_o(hbh^{-1})\hspace{1cm} (\textnormal{where } C_h \textnormal{ is the conjugation map with respect to } h)\\
    =&(hbh^{-1})\cdot o\\
    =& (hb)\cdot(h^{-1}\cdot o)\\
    =&(hb)\cdot o\hspace{1.52cm}(\textnormal{since } h\in H)\\
    =&h\cdot(b\cdot o)\\
    =&h\cdot \varphi_o(b)\\
    =&\phi_h(\varphi_o(b))\\
    =&(\phi_h\circ \varphi_o)(b),
\end{align*}
hence $$\varphi_o\circ C_h=\phi_h\circ \varphi_o,\ \forall h\in H.$$
By taking derivatives we obtain $$(d\varphi_o)_e\circ(dC_h)_e=(d\phi_h)_o\circ(d\varphi_o)_e,\ \forall h\in H$$ or, equivalently, \begin{equation}\label{equivariant:equality}(d\varphi_o)_e\circ\textnormal{Ad}(h)=j(h)\circ(d\varphi_o)_e,\ \forall h\in H.\end{equation}
In particular, $$(d\varphi_o)_e\big{|}_{\mathfrak{m}}\circ\textnormal{Ad}(h)\big{|}_{\mathfrak{m}}=j(h)\circ(d\varphi_o)_e\big{|}_{\mathfrak{m}},\ \forall h\in H,$$ thus, $(d\varphi_o)_e\big{|}_{\mathfrak{m}}$ is an intertwining operator beetween $j$ and $\Ad^H\big{|}_{\mathfrak{m}}$. The proof is complete. 
\end{proof}

We say that a Riemannian metric $g$ on a homogeneous reductive $G$-space $M$ is $G$-invariant whenever $\phi_a:(M,g)\rightarrow (M,g)$ is an isometry for every $a\in G.$ Such a metric can be identified with an $\textnormal{Ad}(H)$-invariant inner product defined on $\mathfrak{m},$ that is, an inner product for which $\textnormal{Ad}(h)\in \textnormal{O}(\mathfrak{m}),\ \forall h\in H.$  More precisely, we have the following proposition:
\begin{proposition}\label{metric:inner:equivalence} Let $M=G/H$ be a reductive homogeneous $G$-space and $\mathfrak{g}=\mathfrak{h}\oplus\mathfrak{m}$ a reductive decomposition of the Lie algebra $\mathfrak{g}$ of $G.$ Define the sets
$$\textnormal{Inv}_G(M):=\{g: g\textnormal{ is a $G$-invariant Riemannian metric on $M$}\}$$ and $$\textnormal{Inn}_H(\mathfrak{m}):=\{\langle\cdot,\cdot\rangle:\langle\cdot,\cdot\rangle \textnormal{ is an Ad($H$)-invariant inner product on $\mathfrak{m}$} \}.$$ Then, the map $g\in\textnormal{Inv}_G(M)\mapsto \langle\cdot,\cdot\rangle_g\in\textnormal{Inn}_H(\mathfrak{m}),$  where $$\langle X,Y\rangle_g:=g_o((d\varphi_o)_e(X),(d\varphi_o)_e(Y)),$$ is a bijection.
\end{proposition}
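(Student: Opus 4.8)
The plan is to rely on Proposition~\ref{Ad:j:equivalence}: the linear map $\Phi:=(d\varphi_o)_e\big|_{\mathfrak m}\colon\mathfrak m\to T_oM$ is an isomorphism intertwining $\textnormal{Ad}^H\big|_{\mathfrak m}$ and the isotropy representation $j$, i.e.\ $\Phi\circ\textnormal{Ad}(h)\big|_{\mathfrak m}=j(h)\circ\Phi$ for all $h\in H$, which is precisely \eqref{equivariant:equality}. First I would check that the assignment $g\mapsto\langle\cdot,\cdot\rangle_g$ really lands in $\textnormal{Inn}_H(\mathfrak m)$. Bilinearity, symmetry and positive-definiteness of $\langle\cdot,\cdot\rangle_g$ are immediate since $\Phi$ is a linear isomorphism and $g_o$ is an inner product on $T_oM$. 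For $\textnormal{Ad}(H)$-invariance one uses that $j(h)=(d\phi_h)_o$ is a $g_o$-isometry of $T_oM$ (being the differential at the fixed point $o$ of the isometry $\phi_h$), so for $h\in H$ and $X,Y\in\mathfrak m$,
\[
\langle\textnormal{Ad}(h)X,\textnormal{Ad}(h)Y\rangle_g=g_o\big(\Phi\,\textnormal{Ad}(h)X,\Phi\,\textnormal{Ad}(h)Y\big)=g_o\big(j(h)\Phi X,j(h)\Phi Y\big)=g_o(\Phi X,\Phi Y)=\langle X,Y\rangle_g .
\]

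For injectivity, I would observe that a $G$-invariant metric is determined by its value at $o$: if $p=a\cdot o$, then $\phi_{a^{-1}}$ is an isometry taking $p$ to $o$, hence $g_p(u,v)=g_o\big((d\phi_{a^{-1}})_p u,(d\phi_{a^{-1}})_p v\big)$; and $g_o$ is in turn recovered from $\langle\cdot,\cdot\rangle_g$ because $\Phi$ is onto $T_oM$. Therefore two $G$-invariant metrics with the same associated inner product agree at every point of $M$.

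For surjectivity I would reverse the construction. Given $\langle\cdot,\cdot\rangle\in\textnormal{Inn}_H(\mathfrak m)$, define an inner product on $T_oM$ by $g_o(u,v):=\langle\Phi^{-1}u,\Phi^{-1}v\rangle$; since the intertwining relation gives $j(h)=\Phi\circ\textnormal{Ad}(h)\big|_{\mathfrak m}\circ\Phi^{-1}$, the $\textnormal{Ad}(H)$-invariance of $\langle\cdot,\cdot\rangle$ translates into $j(H)$-invariance of $g_o$. Then set, for $p\in M$ and any $a\in G$ with $a\cdot o=p$,
\[
g_p(u,v):=g_o\big((d\phi_{a^{-1}})_p u,(d\phi_{a^{-1}})_p v\big),\qquad u,v\in T_pM .
\]
If also $b\cdot o=p$, then $h:=b^{-1}a\in H$ and $\phi_{a^{-1}}=\phi_{h^{-1}}\circ\phi_{b^{-1}}$, so $(d\phi_{a^{-1}})_p=j(h^{-1})\circ(d\phi_{b^{-1}})_p$; the $j(H)$-invariance of $g_o$ then makes the definition independent of $a$. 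The $G$-invariance of $g$ follows from a one-line computation using $\phi_{(ca)^{-1}}\circ\phi_c=\phi_{a^{-1}}$, and finally $\langle X,Y\rangle_g=g_o(\Phi X,\Phi Y)=\langle X,Y\rangle$, so $g$ is a preimage of the given inner product.

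The main obstacle, and the only non-formal point, is verifying that the metric $g$ just constructed is \emph{smooth}. To deal with it I would invoke that $\varphi_o\colon G\to M$ is a surjective submersion (Proposition~\ref{diff:submersion}): around any point of $M$ there is a smooth local section $\sigma$ of $\varphi_o$, so that $\sigma(p)\cdot o=p$ and on the domain of $\sigma$ one may take $a=\sigma(p)$, giving $g_p(u,v)=g_o\big((d\phi_{\sigma(p)^{-1}})_p u,(d\phi_{\sigma(p)^{-1}})_p v\big)$, which depends smoothly on $p$ because $\sigma$, inversion in $G$, the action $\phi$, and $g_o$ are all smooth. Smoothness being local, this finishes the construction and hence the proof that $g\mapsto\langle\cdot,\cdot\rangle_g$ is a bijection.
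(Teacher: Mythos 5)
Your proof is correct and follows essentially the same route as the paper: the intertwining relation \eqref{equivariant:equality} for well-definedness, pullback by $\phi_{a^{-1}}$ for injectivity, and the inverse construction $g_p(u,v)=\langle\Phi^{-1}(d\phi_{a^{-1}})_pu,\Phi^{-1}(d\phi_{a^{-1}})_pv\rangle$ for surjectivity. In fact you are more careful than the paper on the surjectivity step, where the paper simply asserts ``by its own definition, $g$ is $G$-invariant'': you explicitly verify independence of the choice of $a$ with $a\cdot o=p$ (via $h=b^{-1}a\in H$ and the $j(H)$-invariance of $g_o$) and the smoothness of $g$ (via local sections of the submersion $\varphi_o$), both of which are genuine, if routine, points that the paper leaves implicit.
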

\begin{proof} Let $g\in \textnormal{Inv}_g(M).$ Since $(d\varphi_o)_e\big{|}_{\mathfrak{m}}$ is a linear isomorphism, then $\langle\cdot,\cdot\rangle_g$ is an inner product on $\mathfrak{m}.$ Additionally, for every $h\in H$ and $X,Y\in\mathfrak{m}$ we have that
\begin{align*}
    \langle \textnormal{Ad}(h)X,\Ad(h)Y\rangle_g=&g_o((d\varphi_o)_e\circ\Ad(h)(X),(d\varphi_o)_e\circ\Ad(h)(Y))\\
    =&g_o(j(h)\circ(d\varphi_o)_e(X),j(h)\circ(d\varphi_o)_e(Y)),\hspace{2cm} (\textnormal{by} \ \eqref{equivariant:equality})\\
    =&g_o((d\phi_h)_o\circ(d\varphi_o)_e(X),(d\phi_h)_o\circ(d\varphi_o)_e(Y))\\
    =&(\phi_h^*g)_o((d\varphi_o)_e(X),(d\varphi_o)_e(Y))\\
    =&g_o((d\varphi_o)_e(X),(d\varphi_o)_e(Y))\\
    =&\langle X,Y\rangle_g,
\end{align*}
where we use the fact that $\phi_h^*g=g$ because $g$ is $G$-invariant. So far, we have shown that the map $g\in\textnormal{Inv}_G(M)\mapsto \langle\cdot,\cdot\rangle_g\in\textnormal{Inn}_H(\mathfrak{m})$ is well-defined. Now, if $\langle\cdot,\cdot\rangle_{g_1}=\langle\cdot,\cdot\rangle_{g_2}$ for $g_1,g_2\in\textnormal{Inv}_G(M),$ then for any $p=a\cdot o\in M$ and $\tilde{X},\tilde{Y}\in T_pM$ we have
\begin{align*}
    (g_1)_p(\tilde{X},\tilde{Y})=&(\phi_{a^{-1}}^\ast g_1)_p(\tilde{X},\tilde{Y})\\
                =&(g_1)_{o}((d\phi_{a^{-1}})_p(\tilde{X}),(d\phi_{a^{-1}})_p(\tilde{Y}))\\
                =&(g_1)_o(X,Y),\ (\textnormal{where }X:=(d\phi_{a^{-1}})_p(\tilde{X})\ \textnormal{and }Y:=(d\phi_{a^{-1}})_p(\tilde{Y}))\\
                =&(g_1)_o\left((d\varphi_o)_e\left[\left((d\varphi_o)_e\big{|}_{\mathfrak{m}}\right)^{-1}(X)\right],(d\varphi_o)_e\left[\left((d\varphi_o)_e\big{|}_{\mathfrak{m}}\right)^{-1}(Y)\right]\right)\\
                =&\left\langle \left((d\varphi_o)_e\big{|}_{\mathfrak{m}}\right)^{-1}(X),\left((d\varphi_o)_e\big{|}_{\mathfrak{m}}\right)^{-1}(Y)\right\rangle_{g_1}\\
                =&\left\langle \left((d\varphi_o)_e\big{|}_{\mathfrak{m}}\right)^{-1}(X),\left((d\varphi_o)_e\big{|}_{\mathfrak{m}}\right)^{-1}(Y)\right\rangle_{g_2}\\
                =&(g_2)_o\left((d\varphi_o)_e\left[\left((d\varphi_o)_e\big{|}_{\mathfrak{m}}\right)^{-1}(X)\right],(d\varphi_o)_e\left[\left((d\varphi_o)_e\big{|}_{\mathfrak{m}}\right)^{-1}(Y)\right]\right)\\
                =&(g_2)_o(X,Y)\\
                =&(g_2)_{o}((d\phi_{a^{-1}})_p(\tilde{X}),(d\phi_{a^{-1}})_p(\tilde{Y}))\\
                =&(\phi_{a^{-1}}^\ast g_2)_p(\tilde{X},\tilde{Y})\\
                =&(g_2)_p(\tilde{X},\tilde{Y}),
\end{align*}
so $g_1=g_2.$ This shows that the map $g\in\textnormal{Inv}_G(M)\mapsto \langle\cdot,\cdot\rangle_g\in\textnormal{Inn}_H(\mathfrak{m})$ is injective. Finally, given an $\Ad(H)$-invariant inner product $\langle\cdot,\cdot\rangle$ on $\mathfrak{m},$ define a Riemannian metric $g$ on $M$ by $$g_p(\tilde{X},\tilde{Y}):=\left\langle \left((d\varphi_o)_e\big{|}_{\mathfrak{m}}\right)^{-1}\left((d\phi_{a^{-1}})_p(\tilde{X})\right),\left((d\varphi_o)_e\big{|}_{\mathfrak{m}}\right)^{-1}\left((d\phi_{a^{-1}})_p(\tilde{Y})\right)\right\rangle,$$ 
for $p=a\cdot o\in M$ and $\tilde{X},\tilde{Y}\in T_pM.$ By its own definition, we have that $g$ is $G$-invariant; also, for $X,Y\in\mathfrak{m},$ 
\begin{align*}
    \left\langle X,Y\right\rangle_g=&g_o((d\varphi_o)_e(X),(d\varphi_o)_e(Y))\\
    =&\left\langle \left((d\varphi_o)_e\big{|}_{\mathfrak{m}}\right)^{-1}(d\phi_{e^{-1}})_p\left((d\varphi_o)_e(X)\right),\left((d\varphi_o)_e\big{|}_{\mathfrak{m}}\right)^{-1}(d\phi_{e^{-1}})_p\left((d\varphi_o)_e(Y)\right)\right\rangle\\
    =&\left\langle \left((d\varphi_o)_e\big{|}_{\mathfrak{m}}\right)^{-1}(d\varphi_o)_e(X),\left((d\varphi_o)_e\big{|}_{\mathfrak{m}}\right)^{-1}(d\varphi_o)_e(Y)\right\rangle\\
    =&\langle X, Y\rangle.
\end{align*}
Hence, $g\in\textnormal{Inv}_G(M)\mapsto \langle\cdot,\cdot\rangle_g\in\textnormal{Inn}_H(\mathfrak{m})$ is surjective. The proof is complete.
\end{proof}

From now on, we will assume that $G$ is compact so that there exists a unique (up to re-scaling) $\Ad(G)$-invariant inner product $(\cdot, \cdot)$ on $\mathfrak{g}.$ This inner product induces a reductive decomposition $\mathfrak{g}=\mathfrak{h}\oplus \mathfrak{m},$ where $\mathfrak{m}$ is the orthogonal complement of $\mathfrak{h}$ in $\mathfrak{g}.$ The restriction $(\cdot,\cdot)\big{|}_{\mathfrak{m}\times\mathfrak{m}}$ of $(\cdot,\cdot)$ to $\mathfrak{m}$ defines a $\Ad(H)$-invariant inner product on $\mathfrak{m},$ that is, a $G$-invariant metric on $M=G/H$ (see Proposition \ref{metric:inner:equivalence}). If $g$ is an arbitrary $G$-invariant metric on $M$ and $\langle\cdot,\cdot\rangle_g$ is the corresponding $\Ad(H)$-invariant inner product on $\mathfrak{m}$ induced by $g,$ then there exists a unique linear operator $A:\mathfrak{m}\rightarrow\mathfrak{m}$ such that \begin{equation}\label{metric:operator:def}(AX,Y)=\langle X,Y\rangle_g,\ \forall X,Y\in\mathfrak{m}.\end{equation} We will say that $A$ is the metric operator associated with $g$ and, when $(\cdot,\cdot)$ is fixed, we can identify any invariant metric with its associated metric operator. The following proposition provides some properties of a metric operator.

\begin{proposition}\label{properties:metric:operator} Let $g$ be an invariant metric and $A:\mathfrak{m}\rightarrow\mathfrak{m}$ its associated metric operator. Then:
\begin{itemize}
    \item[$i)$] $A$ is positive-definite.\vspace{0.3cm}
    \item[$ii)$] $A$ is self-adjoint with respect to $(\cdot,\cdot)\big{|}_{\mathfrak{m}\times\mathfrak{m}}.$\vspace{0.3cm}
    \item[$iii)$] $A$ commutes with $\Ad(h)\big{|}_\mathfrak{m}$ for all $h\in H.$
\end{itemize}
Furthermore, a linear operator satisfying $i)-iii)$ is the metric operator associated with a $G$-invariant metric.
\end{proposition}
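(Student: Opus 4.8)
The plan is to verify the three properties directly from the defining relation \eqref{metric:operator:def}, namely $(AX,Y)=\langle X,Y\rangle_g$ for all $X,Y\in\mathfrak{m}$, and then to reverse this construction for the converse statement. Throughout I would use two features of the background inner product $(\cdot,\cdot)$: it is symmetric, and being $\Ad(G)$-invariant it makes each $\Ad(h)\big{|}_{\mathfrak{m}}$ an element of $\textnormal{O}(\mathfrak{m})$, so that the $(\cdot,\cdot)$-adjoint of $\Ad(h)\big{|}_{\mathfrak{m}}$ is $\Ad(h^{-1})\big{|}_{\mathfrak{m}}=\left(\Ad(h)\big{|}_{\mathfrak{m}}\right)^{-1}$.

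For $i)$ and $ii)$ the argument is immediate. Positivity: $(AX,X)=\langle X,X\rangle_g>0$ whenever $X\neq 0$, since $\langle\cdot,\cdot\rangle_g$ is an inner product. Self-adjointness: using the symmetry of both $(\cdot,\cdot)$ and $\langle\cdot,\cdot\rangle_g$, one has $(AX,Y)=\langle X,Y\rangle_g=\langle Y,X\rangle_g=(AY,X)=(X,AY)$ for all $X,Y\in\mathfrak{m}$, hence $A$ is $(\cdot,\cdot)$-self-adjoint. For $iii)$, I would fix $h\in H$ and compute, for arbitrary $X,Y\in\mathfrak{m}$, $(A\,\Ad(h)X,Y)=\langle\Ad(h)X,Y\rangle_g$; writing $Y=\Ad(h)\big(\Ad(h^{-1})Y\big)$ and invoking the $\Ad(H)$-invariance of $\langle\cdot,\cdot\rangle_g$ gives $\langle X,\Ad(h^{-1})Y\rangle_g=(AX,\Ad(h^{-1})Y)$, and then the orthogonality of $\Ad(h)\big{|}_{\mathfrak{m}}$ turns this into $(\Ad(h)AX,Y)$. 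Since $(A\,\Ad(h)X,Y)=(\Ad(h)AX,Y)$ holds for every $Y\in\mathfrak{m}$ and $(\cdot,\cdot)\big{|}_{\mathfrak{m}\times\mathfrak{m}}$ is nondegenerate, we conclude $A\,\Ad(h)\big{|}_{\mathfrak{m}}=\Ad(h)\big{|}_{\mathfrak{m}}\,A$.

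For the converse, given a linear operator $A$ satisfying $i)$–$iii)$, I would define $\langle X,Y\rangle:=(AX,Y)$ on $\mathfrak{m}$ and check it lies in $\textnormal{Inn}_H(\mathfrak{m})$: bilinearity is clear, symmetry is exactly $ii)$, positive-definiteness is $i)$, and $\Ad(H)$-invariance follows from $iii)$ together with $\Ad(h)\big{|}_{\mathfrak{m}}\in\textnormal{O}(\mathfrak{m})$, since $\langle\Ad(h)X,\Ad(h)Y\rangle=(\Ad(h)AX,\Ad(h)Y)=(AX,Y)=\langle X,Y\rangle$. By Proposition \ref{metric:inner:equivalence} this inner product is $\langle\cdot,\cdot\rangle_g$ for a unique $G$-invariant metric $g$ on $M$; comparing with \eqref{metric:operator:def} and using nondegeneracy of $(\cdot,\cdot)$ shows that $A$ is precisely the metric operator associated with $g$.

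I do not anticipate a genuine obstacle here: every step is a short computation. The only point needing mild care is the bookkeeping in $iii)$ — transferring $\Ad(h)$ across the inner product $(\cdot,\cdot)$ via orthogonality and keeping the inverse/adjoint identifications consistent — and, in the converse, making sure to cite Proposition \ref{metric:inner:equivalence} to obtain an actual metric rather than just an abstract inner product.
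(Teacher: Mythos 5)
Your proof is correct and follows essentially the same route as the paper: parts $i)$ and $ii)$ read off directly from $(AX,Y)=\langle X,Y\rangle_g$, part $iii)$ combines the $\Ad(H)$-invariance of $\langle\cdot,\cdot\rangle_g$ with the $\Ad(G)$-invariance (orthogonality) of $(\cdot,\cdot)$ and nondegeneracy, and the converse defines $\langle X,Y\rangle:=(AX,Y)$ and invokes Proposition \ref{metric:inner:equivalence}. The only difference is cosmetic — you move $\Ad(h)$ across the pairing starting from $(A\,\Ad(h)X,Y)$ rather than from $(AX,Y)$ — so nothing further is needed.
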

\begin{proof} Since $\langle\cdot,\cdot\rangle_g$ is an inner product then $$\langle X,X\rangle_g>0,\ X\in\mathfrak{m}-\{0\}$$ and $$\langle X,Y\rangle_g=\langle Y,X\rangle_g,\ X,Y\in\mathfrak{m}.$$ By equality \eqref{metric:operator:def} we have that
$$(AX,X)>0,\ X\in\mathfrak{m}-\{0\}$$ and $$(AX,Y)=(AY,X),\ X,Y\in\mathfrak{m}.$$
This proves $i)$ and $ii).$ On the other hand, for each $h\in H$ we have
\begin{align*}
    (AX,Y)=\ &\langle X,Y\rangle_g\\
    =\ &\langle\Ad(h)X,\Ad(h)Y\rangle_g\hspace{3.75cm} (\textnormal{since }\langle\cdot,\cdot\rangle_g\ \textnormal{is }\Ad(H)\textnormal{-invariant})\\
    =\ & (A\Ad(h)X,\Ad(h)Y)\\
    =\ & (\Ad(h^{-1}) A\Ad(h)X,\Ad(h^{-1})\Ad(h)Y)\hspace{0.5cm} (\textnormal{since }(\cdot,\cdot)\ \textnormal{is }\Ad(G)\textnormal{-invariant})\\
    =\ & (\Ad(h^{-1}) A\Ad(h)X,Y)\\
    =\ & (\Ad(h)^{-1} A\Ad(h)X,Y).
\end{align*}
This equality holds for every $X,Y\in\mathfrak{m},$ therefore $AX=\Ad(h)^{-1} A\Ad(h)X,\ \forall X\in\mathfrak{m}$ or, equivalently, $$\Ad(h) A=A\Ad(h).$$ This proves $iii).$ Now, if $A:\mathfrak{m}\rightarrow \mathfrak{m}$ satisfies $i)-iii),$ then is easy to see that the formula $$\langle X,Y\rangle:=(AX,Y)$$ defines a $\Ad(H)$-invariant inner product on $\mathfrak{m}$ such that, by virtue of Proposition \ref{metric:inner:equivalence}, induces a unique $G$-invariant metric $g$ on $M.$ By definition, $A$ is the metric operator associated with $g.$
\end{proof}
It is well known that the isotropy representation (henceforth, referred to as $\Ad^H\big{|}_\mathfrak{m}$ due to Proposition \ref{Ad:j:equivalence}) of a reductive homogeneous $G$-space is completely reducible, this means that there exist irreducible $\Ad(H)$-invariant subspaces  $\mathfrak{m}_1,...,\mathfrak{m}_s$ such that \begin{equation}\label{Isotropy:decomposition}
    \mathfrak{m}=\mathfrak{m}_1\oplus\cdots\oplus\mathfrak{m}_s.
\end{equation}
The subspaces $\mathfrak{m}_1,...,\mathfrak{m}_s$ are usually called isotropy summands of the decomposition \eqref{Isotropy:decomposition}. Since the inner product $(\cdot,\cdot)$ is $\Ad(G)$-invariant, we may assume that the subspaces $\mathfrak{m}_1,...,\mathfrak{m}_s$ are pairwise $(\cdot,\cdot)$-orthogonal. Let us consider the equivalence relation $\cong$ in the set $\{\mathfrak{m}_1,...,\mathfrak{m}_s\}$ given by
$$\mathfrak{m}_i\cong\mathfrak{m}_j\ \textnormal{if and only if}\ \Ad^H\big{|}_{\mathfrak{m}_i}\ \textnormal{and}\ \Ad^H\big{|}_{\mathfrak{m}_j}\ \textnormal{are equivalent as representations}.$$ This relation induces a partition $$\displaystyle\{1,...,s\}=C_1\cup...\cup C_S\ \textnormal{with}\ S\leq s,$$
so that
$$\mathfrak{m}=M_1\oplus\cdots\oplus M_S,$$ where $$\displaystyle M_k:=\bigoplus\limits_{j\in C_k}\mathfrak{m}_j,\ k=1,...,S.$$
The subspaces $M_1,...,M_S$ are called isotypical summands of the decomposition \eqref{Isotropy:decomposition}.
\begin{remark} The decomposition \eqref{Isotropy:decomposition} is not necessarily unique; however, it becomes unique (up to reordering) when all the subrepresentations $\mathfrak{m}_1,...,\mathfrak{m}_s$ are pairwise inequivalent.
\end{remark}
Given $j\in\{1,...,s\}$, denote by $i_j:\mathfrak{m}_j\rightarrow\mathfrak{m}$ the inclusion map and $P_j:\mathfrak{m}\rightarrow\mathfrak{m}_j$ the projection map onto $\mathfrak{m}_j$ with respect to \eqref{Isotropy:decomposition}, this means that for a given $Y=Y_1+...+Y_s\in\mathfrak{m},\   Y_r\in\mathfrak{m}_r,\ r=1,...,s;$ we have that $P_j(Y):=Y_j.$ If we define $Q_j(Y):=\sum\limits_{r\neq j}Y_r,$ then $Y=P_j(Y)+Q_j(Y),$ where $P_j^2(Y)=P_j(Y)$ and $P_jQ_j(Y)=0.$
\begin{proposition}\label{A:Equivariant} Let $A:\mathfrak{m}\rightarrow\mathfrak{m}$ be a metric operator. Then $P_k\circ A\circ i_j:\mathfrak{m}_j\rightarrow\mathfrak{m}_k$ is an equivariant map between the irreducible representations $\Ad^H\big{|}_{\mathfrak{m}_j}$ and $\Ad^H\big{|}_{\mathfrak{m}_k}$ for all $j$ and $k$ in the set $\{1,...,s\}.$  
\end{proposition}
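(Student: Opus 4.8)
The plan is to unwind the definition of equivariance and reduce everything to the two structural facts already available: that the metric operator $A$ commutes with $\Ad(h)\big|_\mathfrak{m}$ for every $h\in H$ (property $iii)$ of Proposition \ref{properties:metric:operator}), and that the projections $P_1,\dots,P_s$ attached to the decomposition \eqref{Isotropy:decomposition} commute with $\Ad(h)\big|_\mathfrak{m}$ because that decomposition is into $\Ad(H)$-invariant subspaces. Fix $j,k\in\{1,\dots,s\}$ and $h\in H$. I would first record that, since $\Ad(h)\mathfrak{m}_r=\mathfrak{m}_r$ for each $r$, writing $Y=Y_1+\dots+Y_s$ with $Y_r\in\mathfrak{m}_r$ gives $\Ad(h)Y=\sum_r\Ad(h)Y_r$ with $\Ad(h)Y_r\in\mathfrak{m}_r$; comparing with the definition of $P_k$ yields $P_k\circ\Ad(h)\big|_\mathfrak{m}=\Ad(h)\big|_{\mathfrak{m}_k}\circ P_k$ (and similarly $i_j$ intertwines $\Ad(h)\big|_{\mathfrak{m}_j}$ with $\Ad(h)\big|_\mathfrak{m}$, which is immediate since $i_j$ is just inclusion).

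Next I would chain these identities. For $X\in\mathfrak{m}_j$,
\[
(P_k\circ A\circ i_j)\big(\Ad(h)\big|_{\mathfrak{m}_j}X\big)=P_k\big(A(\Ad(h)X)\big)=P_k\big(\Ad(h)(AX)\big)=\Ad(h)\big|_{\mathfrak{m}_k}\big(P_k(AX)\big),
\]
where the middle step uses property $iii)$ of Proposition \ref{properties:metric:operator} and the last step uses the intertwining property of $P_k$ just established. Since the right-hand side is $\Ad(h)\big|_{\mathfrak{m}_k}\circ(P_k\circ A\circ i_j)(X)$, this says precisely that
\[
(P_k\circ A\circ i_j)\circ\Ad(h)\big|_{\mathfrak{m}_j}=\Ad(h)\big|_{\mathfrak{m}_k}\circ(P_k\circ A\circ i_j),\qquad\forall h\in H,
\]
which is the definition of $P_k\circ A\circ i_j$ being equivariant between $\Ad^H\big|_{\mathfrak{m}_j}$ and $\Ad^H\big|_{\mathfrak{m}_k}$. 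The irreducibility of the two representations is part of the data of decomposition \eqref{Isotropy:decomposition}, so nothing extra is needed there.

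There is no real obstacle here; the only point that deserves an explicit sentence rather than being asserted is the commutation $P_k\circ\Ad(h)\big|_\mathfrak{m}=\Ad(h)\big|_{\mathfrak{m}_k}\circ P_k$, i.e.\ that the isotypical-summand projections are $H$-equivariant — everything else is a one-line substitution. (As a side remark, Schur's lemma then forces $P_k\circ A\circ i_j$ to be an isomorphism when $\mathfrak{m}_j\cong\mathfrak{m}_k$ and zero when $\mathfrak{m}_j\not\cong\mathfrak{m}_k$, which is presumably how this proposition will be used later, but that is not asserted in the statement and I would not prove it here.)
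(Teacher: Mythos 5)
Your argument is correct and is essentially the paper's proof: both rest on the same two facts, namely that $A$ commutes with $\Ad(h)\big|_{\mathfrak{m}}$ (Proposition \ref{properties:metric:operator} $iii)$) and that $P_k$ intertwines $\Ad(h)\big|_{\mathfrak{m}}$ with $\Ad(h)\big|_{\mathfrak{m}_k}$ because each $\mathfrak{m}_r$ is $\Ad(H)$-invariant. The paper establishes the latter via the decomposition $Y=P_k(Y)+Q_k(Y)$ rather than by summing over all components, but this is only a cosmetic difference.
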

\begin{proof} Let $j,k\in\{1,...,s\},$ $h\in H$ and  $X\in\mathfrak{m}_j.$ Then \begin{equation}\label{equivariant:1}\left[\Ad(h)\circ P_k\circ A\circ i_j\right](X)=\ \Ad(h)(P_k(AX)).\end{equation} Since $P_k(AX)\in\mathfrak{m}_k$ then $\Ad(h)(P_k(AX))\in\mathfrak{m}_k,$ thus \begin{equation}\label{P:projection}P_k(\Ad(h)(P_k(AX))=\Ad(h)(P_k(AX)).
\end{equation}
Additionally, $\displaystyle Q_k(AX)\in\bigoplus\limits_{r\neq k}\mathfrak{m}_r$ (which is also an $\Ad(H)$-invariant subspace as each $\mathfrak{m}_r$ is), hence $$\displaystyle\Ad(h)(Q_k(AX))\in\bigoplus\limits_{r\neq k}\mathfrak{m}_r$$ which implies that \begin{equation}\label{Q:projection}P_k(\Ad(h)(Q_k(AX)))=0.
\end{equation}
By adding equations \eqref{P:projection} and \eqref{Q:projection} we obtain
\begin{align*}
    &P_k(\Ad(h)(P_k(AX))+P_k(\Ad(h)(Q_k(AX)))=\Ad(h)(P_k(AX))\\
    \Longrightarrow\ &P_k(\Ad(h)(P_k(AX)+Q_k(AX)))=\Ad(h)(P_k(AX))\\
    \Longrightarrow\ &P_k(\Ad(h)(AX))=\Ad(h)(P_k(AX)),
\end{align*}
but $A$ is a metric operator, so $\Ad(h)(AX)=A(\Ad(h)X)$ (see Proposition \ref{properties:metric:operator}, $iii)$) and, therefore
\begin{equation}\label{auxiliar:P:A:Ad}
    P_k(A(\Ad(h)X))=\Ad(h)(P_k(AX)).
\end{equation}
Finally, note that $X\in\mathfrak{m}_j,$ which is $\Ad(H)$-invariant, then $\Ad(h)X\in\mathfrak{m}_j$ and $i_j(\Ad(h)X)=\Ad(h)X.$ Hence, we can conclude from \eqref{auxiliar:P:A:Ad} that
$$P_k(A(i_j(\Ad(h)X)))=\Ad(h)(P_k(AX)),$$ or equivalently, \begin{equation}\label{equivariant:2}
    \left[P_k\circ A\circ i_j\circ\Ad(h)\right](X)=\Ad(h)(P_k(AX)).
\end{equation}
By \eqref{equivariant:1} and \eqref{equivariant:2} we obtain 
\begin{equation*}
    \left[P_k\circ A\circ i_j\circ\Ad(h)\right](X)=\left[\Ad(h)\circ P_k\circ A\circ i_j\right](X).
\end{equation*}
Since $X\in\mathfrak{m}_j$ was arbitrarily chosen, then $$P_k\circ A\circ i_j\circ\Ad(h)\big{|}_{\mathfrak{m}_j}=\Ad(h)\big{|}_{\mathfrak{m}_k}\circ P_k\circ A\circ i_j.$$
The proof is complete.
\end{proof}
\begin{corollary}\label{j:k:0} If $\Ad^H\big{|}_{\mathfrak{m}_j}$ and $\Ad^H\big{|}_{\mathfrak{m}_k}$ are not equivalent, then $P_k\circ A\circ i_j=0.$ 
\end{corollary}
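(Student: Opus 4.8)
The statement is Corollary \ref{j:k:0}: if $\Ad^H\big|_{\mathfrak{m}_j}$ and $\Ad^H\big|_{\mathfrak{m}_k}$ are not equivalent, then $P_k\circ A\circ i_j=0$.

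The plan is to invoke Proposition \ref{A:Equivariant}, which tells us that $P_k\circ A\circ i_j\colon\mathfrak{m}_j\to\mathfrak{m}_k$ is an equivariant map (intertwiner) between the irreducible $H$-representations $\Ad^H\big|_{\mathfrak{m}_j}$ and $\Ad^H\big|_{\mathfrak{m}_k}$, and then apply Schur's Lemma. The key point is that an intertwining operator between two irreducible representations is either zero or an isomorphism: its kernel is an invariant subspace of the source, hence (by irreducibility) either $\{0\}$ or all of $\mathfrak{m}_j$; and its image is an invariant subspace of the target, hence either $\{0\}$ or all of $\mathfrak{m}_k$. If the map were nonzero, it would therefore be a bijective intertwiner, exhibiting an equivalence between $\Ad^H\big|_{\mathfrak{m}_j}$ and $\Ad^H\big|_{\mathfrak{m}_k}$, contradicting the hypothesis that these representations are inequivalent. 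Hence $P_k\circ A\circ i_j=0$.

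I would carry this out in three short steps: first, cite Proposition \ref{A:Equivariant} to get equivariance of $f:=P_k\circ A\circ i_j$; second, observe $\ker f$ and $\operatorname{Im} f$ are $\Ad(H)$-invariant subspaces of $\mathfrak{m}_j$ and $\mathfrak{m}_k$ respectively, so by irreducibility each is trivial or the whole space; third, rule out the "whole space" case on both sides simultaneously — if $\ker f=\{0\}$ then $f$ is injective, forcing $\operatorname{Im} f\neq\{0\}$, hence $\operatorname{Im} f=\mathfrak{m}_k$, so $f$ is a bijective intertwiner and the two representations would be equivalent, against hypothesis. Therefore $\ker f=\mathfrak{m}_j$, i.e. $f=0$.

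There is essentially no obstacle here; this is a direct application of Schur's Lemma to the equivariance already established in Proposition \ref{A:Equivariant}. The only mild subtlety is that one should be working over a field where Schur's Lemma in the "zero or isomorphism" form is valid (the real case suffices for the conclusion as stated, since we only need nonzero $\Rightarrow$ equivalence, not that the endomorphism algebra is a division ring); since we merely need the dichotomy between the zero map and an isomorphism, the argument goes through without invoking complexifications.
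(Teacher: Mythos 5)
Your proposal is correct and follows exactly the same route as the paper: cite Proposition \ref{A:Equivariant} for equivariance of $P_k\circ A\circ i_j$, then apply the Schur dichotomy (zero or isomorphism) between irreducible representations to conclude the map must vanish when the representations are inequivalent. You merely spell out the kernel/image argument behind Schur's Lemma, which the paper takes as known.
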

\begin{proof} By Proposition \ref{A:Equivariant}, $P_k\circ A\circ i_j$ is an equivariant map between the representations of $H$ in $\mathfrak{m}_j$ and $\mathfrak{m}_k.$ Since these representations are irreducible, then either $P_k\circ A\circ i_j=0$ or $P_k\circ A\circ i_j$ is an isomorphism. If $\Ad^H\big{|}_{\mathfrak{m}_j}$ and $\Ad^H\big{|}_{\mathfrak{m}_k}$ are not equivalent representations, then $P_k\circ A\circ i_j$ cannot be an isomorphism, hence $P_k\circ A\circ i_j=0.$
\end{proof}
The next result is a useful theorem that characterizes all the equivariant endomorphisms of an irreducible real representation. We refer to \cite[Chapter II, Theorem 6.3]{BD} for a proof.
\begin{theorem}\label{irreducible:real:representations}
    Let $\rho:K\rightarrow \textnormal{GL}(V)$ be an irreducible real representation of a compact Lie group and let $(\cdot,\cdot)_V$ be a $\rho(K)$-invariant inner product on $V.$ Then there exists a $(\cdot,\cdot)_V$-orthonormal basis $\mathcal{B}_V$ of $V$ such that one of the following assertions holds:
    \begin{itemize}
        \item[$i)$] For each equivariant endomorphism $S:V\rightarrow V,$  there exist $\zeta\in\mathbb{R}$ such that 
    \begin{equation}\label{end:orthogonal}
 [S]_{\mathcal{B}_V}=\left(\begin{array}{cccc}
            \zeta & 0 & \cdots & 0\\
            0 & \zeta & \cdots & 0\\
            \vdots & \vdots & \ddots & \vdots\\
            0 & 0 & \cdots & \zeta
            \end{array}\right),
    \end{equation}
    and, conversely, for each $\zeta \in\mathbb{R},$ the formula \eqref{end:orthogonal} defines an equivariant endomorphism $S:V\rightarrow V.$\\
    
    \item[$ii)$] For each equivariant endomorphism $S:V\rightarrow V,$ there exists a matrix $$\zeta=\left(\begin{array}{cc}a & -b \\ b & a \end{array}\right),\ a,b\in\mathbb{R},$$ such that
    \begin{equation}\label{end:complex}
        [S]_{\mathcal{B}_V}=\left(\begin{array}{cccc}
            \zeta & 0 & \cdots & 0\\
            0 & \zeta & \cdots & 0\\
            \vdots & \vdots & \ddots & \vdots\\
            0 & 0 & \cdots & \zeta
            \end{array}\right),
    \end{equation}
    and, conversely, for each matrix $$\zeta=\left(\begin{array}{cc}a & -b \\ b & a \end{array}\right),\ a,b\in\mathbb{R},$$ the formula \eqref{end:complex} defines an equivariant endomorphism $S:V\rightarrow V.$\\
    
    \item[$iii)$] For each equivariant endomorphism $S:V\rightarrow V,$ there exists a matrix $$\zeta=\left(\begin{array}{cccc}a & d & -b & -c \\ -d & a & c & -b\\ b & -c & a & -d\\ c & b & d & a \end{array}\right),\ a,b,c,d\in\mathbb{R},$$ such that
    \begin{equation}\label{end:quaternionic}
        [S]_{\mathcal{B}_V}=\left(\begin{array}{cccc}
            \zeta & 0 & \cdots & 0\\
            0 & \zeta & \cdots & 0\\
            \vdots & \vdots & \ddots & \vdots\\
            0 & 0 & \cdots & \zeta
            \end{array}\right),
    \end{equation}
    and, conversely, for each matrix $$\zeta=\left(\begin{array}{cccc}a & d & -b & -c \\ -d & a & c & -b\\ b & -c & a & -d\\ c & b & d & a \end{array}\right),\ a,b,c,d\in\mathbb{R}$$ the formula \eqref{end:quaternionic} defines an equivariant endomorphism $S:V\rightarrow V.$
    \end{itemize}
    \end{theorem}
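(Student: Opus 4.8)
The plan is to reduce the statement to the classification of finite-dimensional real division algebras, while keeping careful track of the invariant inner product $(\cdot,\cdot)_V$. First I would introduce the algebra $\mathcal{E} := \mathrm{End}_K(V)$ of equivariant endomorphisms of $V$. Since $\rho$ is irreducible, Schur's lemma shows that every nonzero element of $\mathcal{E}$ is invertible, so $\mathcal{E}$ is a finite-dimensional associative real division algebra; by the classical theorem of Frobenius, $\mathcal{E}$ is isomorphic as an $\mathbb{R}$-algebra to $\mathbb{R}$, $\mathbb{C}$, or $\mathbb{H}$. These three alternatives will produce cases $i)$, $ii)$ and $iii)$ respectively.

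The second ingredient is that $\mathcal{E}$ is stable under taking adjoints with respect to $(\cdot,\cdot)_V$: if $S\in\mathcal{E}$, then $S\rho(k)=\rho(k)S$ for all $k\in K$, and since each $\rho(k)$ is $(\cdot,\cdot)_V$-orthogonal one checks directly that $S^{*}\rho(k)=\rho(k)S^{*}$ as well, so $S^{*}\in\mathcal{E}$. Thus $S\mapsto S^{*}$ is an $\mathbb{R}$-linear anti-automorphism of $\mathcal{E}$ fixing $\mathbb{R}\cdot\mathrm{Id}$, i.e. an involution of the division algebra $\mathcal{E}$. I would then observe that no $S\in\mathcal{E}$ can be self-adjoint with $S^{2}$ a negative multiple of $\mathrm{Id}$ (its spectrum would be purely imaginary), so the $(+1)$-eigenspace of this involution is exactly $\mathbb{R}\cdot\mathrm{Id}$; in the $\mathbb{C}$, resp. $\mathbb{H}$, case the $(-1)$-eigenspace is then the space of \emph{imaginary} elements of $\mathcal{E}$. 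After rescaling, this yields a skew-adjoint $J\in\mathcal{E}$ with $J^{2}=-\mathrm{Id}$ in the complex case, and skew-adjoint $J_{1},J_{2},J_{3}\in\mathcal{E}$ with $J_{a}^{2}=-\mathrm{Id}$ and $J_{1}J_{2}=J_{3}=-J_{2}J_{1}$ (and cyclic permutations) in the quaternionic case.

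Next I would build the orthonormal basis $\mathcal{B}_V$. In case $i)$ every $S\in\mathcal{E}$ equals $\zeta\,\mathrm{Id}$ for some $\zeta\in\mathbb{R}$, so any $(\cdot,\cdot)_V$-orthonormal basis gives \eqref{end:orthogonal}. In case $ii)$, for a unit vector $e$ the pair $\{e,Je\}$ is orthonormal (because $J$ is skew-adjoint, $(e,Je)=-(e,Je)=0$, and $|Je|=|e|$ since $J$ is orthogonal), and it spans an $\mathcal{E}$-invariant subspace whose $(\cdot,\cdot)_V$-orthogonal complement is again $\mathcal{E}$-invariant since $\mathcal{E}$ is closed under adjoints. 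Iterating gives $\mathcal{B}_V=(e_1,Je_1,e_2,Je_2,\dots)$, and writing $S=a\,\mathrm{Id}+bJ$ one reads off \eqref{end:complex} block by block. Case $iii)$ is identical: $\{e,J_1e,J_2e,J_3e\}$ is orthonormal (the quaternion relations together with skew-adjointness annihilate all cross terms) and spans an $\mathcal{E}$-invariant subspace with $\mathcal{E}$-invariant complement, so iterating produces $\mathcal{B}_V=(e_1,J_1e_1,J_2e_1,J_3e_1,e_2,\dots)$, and $S=a\,\mathrm{Id}+bJ_1+cJ_2+dJ_3$ has in each block precisely the $4\times 4$ matrix of \eqref{end:quaternionic}, once the labelling and signs of $J_1,J_2,J_3$ are fixed to match. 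In all three cases the converse is immediate, since a block-diagonal matrix of the stated shape represents a real linear combination of $\mathrm{Id}$ and the relevant $J_a$, hence an equivariant endomorphism.

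The step I expect to be the main obstacle is pinning down the adjoint involution in the quaternionic case---proving that $S\mapsto S^{*}$ must be quaternionic conjugation, so that the imaginary quaternions in $\mathcal{E}$ act as skew-adjoint operators---together with the purely bookkeeping task of ordering $J_1,J_2,J_3$ and choosing the signs in their multiplication table so that the resulting $4\times 4$ block is exactly the matrix displayed in \eqref{end:quaternionic}, and not merely one conjugate to it.
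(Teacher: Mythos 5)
Your proposal is correct, and it is essentially the argument behind the result the paper does not prove but simply cites (Br\"ocker--tom Dieck, Chapter II, Theorem 6.3): Schur's lemma plus Frobenius identifies $\mathrm{End}_K(V)$ with $\mathbb{R}$, $\mathbb{C}$ or $\mathbb{H}$, closure under $(\cdot,\cdot)_V$-adjoints yields orthogonal complex/quaternionic structures, and iterating on orthogonal complements produces the adapted orthonormal basis. The only point worth one extra line is your identification of the $(-1)$-eigenspace of $S\mapsto S^{*}$ with the imaginary elements: note that any nonzero skew-adjoint $w$ has $w^{*}w=-w^{2}$ self-adjoint and positive-definite, hence $w^{2}=-\lambda\,\mathrm{Id}$ with $\lambda>0$, so $w$ is imaginary, and a dimension count (the self-adjoint part being exactly $\mathbb{R}\cdot\mathrm{Id}$) then forces equality; the remaining sign/ordering issue for $J_1,J_2,J_3$ is, as you say, pure bookkeeping since relabelling the basis conjugates one $4\times4$ model of $\mathbb{H}$ into the displayed one.
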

    \begin{corollary}\label{invariant:inner:products}
    Let $\rho:K\rightarrow\textnormal{GL}(V)$ be a real irreducible representation of a compact Lie group, and let $(\cdot,\cdot)_1$ and $(\cdot,\cdot)_2$  be $\rho$-invariant inner products on $V.$ Then, there exists $a>0$ such that $(\cdot,\cdot)_1=a(\cdot,\cdot)_2.$
    \end{corollary}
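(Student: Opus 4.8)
The plan is to encode the comparison of the two inner products in a single self-adjoint operator and then use irreducibility to force that operator to be a scalar. Since $(\cdot,\cdot)_2$ is nondegenerate, there is a unique linear operator $S:V\to V$ with $(X,Y)_1=(SX,Y)_2$ for all $X,Y\in V$. Because $(\cdot,\cdot)_1$ is symmetric and $(\cdot,\cdot)_2$ is an inner product, $S$ is self-adjoint with respect to $(\cdot,\cdot)_2$; because $(\cdot,\cdot)_1$ is positive-definite, $S$ is positive-definite with respect to $(\cdot,\cdot)_2$ (in particular all its eigenvalues are positive reals).

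Next I would check that $S$ is equivariant, i.e.\ commutes with $\rho(k)$ for every $k\in K$. For $k\in K$ and $X,Y\in V$, the $\rho$-invariance of $(\cdot,\cdot)_1$ gives
\[
(S\rho(k)X,\rho(k)Y)_2=(\rho(k)X,\rho(k)Y)_1=(X,Y)_1=(SX,Y)_2,
\]
and rewriting the left-hand side using the $\rho$-invariance of $(\cdot,\cdot)_2$ as $(\rho(k)^{-1}S\rho(k)X,Y)_2$, the nondegeneracy of $(\cdot,\cdot)_2$ yields $\rho(k)^{-1}S\rho(k)=S$ for all $k\in K$.

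Finally I would pin down $S$. One route is to invoke Theorem \ref{irreducible:real:representations}: in each of the three cases $[S]_{\mathcal{B}_V}$ is block-diagonal with equal blocks $\zeta$, and since $\mathcal{B}_V$ is $(\cdot,\cdot)_2$-orthonormal, self-adjointness of $S$ forces $[S]_{\mathcal{B}_V}$ to be symmetric; inspecting the three admissible shapes of $\zeta$ shows that symmetry makes $\zeta$ a scalar multiple of the identity block in every case, hence $S=a\,\textnormal{Id}_V$, and positive-definiteness gives $a>0$. A shorter alternative that bypasses the case analysis: by the spectral theorem for self-adjoint operators on the real inner product space $(V,(\cdot,\cdot)_2)$, $S$ has a real eigenvalue $a$, which is positive since $S$ is positive-definite; the eigenspace $\ker(S-a\,\textnormal{Id}_V)$ is nonzero and, because $S$ commutes with $\rho(K)$, it is $\rho$-invariant, so irreducibility forces it to be all of $V$, i.e.\ $S=a\,\textnormal{Id}_V$. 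Either way, $(X,Y)_1=(SX,Y)_2=a\,(X,Y)_2$ for all $X,Y\in V$, proving the claim. The only step requiring any care is the equivariance verification; the rest is routine linear algebra, so I do not expect a genuine obstacle here.
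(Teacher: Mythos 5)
Your proposal is correct. Your first route---defining $S$ by $(X,Y)_1=(SX,Y)_2$, checking that $S$ is positive-definite, self-adjoint with respect to $(\cdot,\cdot)_2$ and $\rho$-equivariant, and then combining Theorem \ref{irreducible:real:representations} with the symmetry of $[S]_{\mathcal{B}_V}$ in a $(\cdot,\cdot)_2$-orthonormal basis to force the blocks $\zeta$ to be scalar---is exactly the paper's argument; you even spell out the equivariance computation that the paper delegates to ``proceeding analogously to the proof of Proposition \ref{properties:metric:operator}.'' Your second route, via the spectral theorem and the observation that the eigenspace $\ker(S-a\,\mathrm{Id}_V)$ is nonzero and $\rho$-invariant (because $S$ commutes with every $\rho(k)$), hence all of $V$ by irreducibility, is a genuinely different and more elementary closing step: it is the standard real Schur-lemma argument and bypasses the classification in Theorem \ref{irreducible:real:representations} entirely. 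What the paper's route buys is uniformity with the matrix formalism it reuses immediately afterward (Corollary \ref{equivariant:maps} and Lemma \ref{almost:main}); what your spectral argument buys is logical independence from that classification and a proof that works verbatim for any positive self-adjoint intertwiner of an irreducible real representation. Both are complete; no gaps.
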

    \begin{proof} Since $(\cdot,\cdot)_1$ and $(\cdot,\cdot)_2$ are $\rho$-invariant inner products, there exists a linear map $S:V\rightarrow V$ such that \begin{equation}\label{auxiliar:inner:products}(u,v)_1=(Su,v)_2,\ u,v\in V.\end{equation} By proceeding analogously to the proof of Proposition \ref{properties:metric:operator}, we can show that $S$ is positive-definite, self-adjoint with respect to $(\cdot,\cdot)_2$ and commutes with $\rho(k)$ for every $k\in K,$ that is, $S$ is $\rho$-equivariant. By Theorem \ref{irreducible:real:representations}, there exists a $(\cdot,\cdot)_2$-orthonormal basis $\mathcal{B}_2$ of $V$ such that 
        $$[S]_{\mathcal{B}_2}=\left(\begin{array}{cccc}
            \zeta & 0 & \cdots & 0\\
            0 & \zeta & \cdots & 0\\
            \vdots & \vdots & \ddots & \vdots\\
            0 & 0 & \cdots & \zeta
            \end{array}\right),$$
    where either $\zeta\in\mathbb{R},$ $$\zeta=\left(\begin{array}{cc}a & -b \\ b & a \end{array}\right),\ a,b\in\mathbb{R},$$ or $$\zeta=\left(\begin{array}{cccc}a & d & -b & -c \\ -d & a & c & -b\\ b & -c & a & -d\\ c & b & d & a \end{array}\right),\ a,b,c,d\in\mathbb{R}.$$ We shall show that $\zeta$ is a scalar multiple of the identity matrix. In fact, if $\zeta\in\mathbb{R}$, then $\left[S\right]_{\mathcal{B}_2}-a\textnormal{I}$ is skew-symmetric, but it is also symmetric because $S$ is self-adjoint with respect to $(\cdot,\cdot)_2$ and the basis $\mathcal{B}_2$ is $(\cdot,\cdot)_2$-orthonormal. Thus $\left[S\right]_{\mathcal{B}_2}-a\textnormal{I}$ must be zero and $S=a\textnormal{I}.$ On the other hand, $S$ is positive-definite so $a>0.$ By formula \eqref{auxiliar:inner:products}, it follows that $$(u,v)_1=(Su,v)_2=(a u,v)_2=a(u,v)_2.$$ The proof is complete.
    \end{proof}
    \begin{corollary}\label{intertwining:isometry} 
    Let $\rho:K\rightarrow\textnormal{GL}(V)$ and $\tau:K\rightarrow \textnormal{GL}(W)$ be equivalent real irreducible representations of a compact Lie group $K,$ let $(\cdot,\cdot)_V$ be a $\rho$-invariant inner product on $V$ and let $(\cdot,\cdot)_W$ be a $\tau$-invariant inner product on $W.$ Then, there exists a intertwining operator $T:V\rightarrow W$ preserving these inner products, that is, $$(Tu,Tv)_W=(u,v)_V,\ u,v\in V.$$
    \end{corollary}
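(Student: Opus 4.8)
The plan is to start from an arbitrary intertwining isomorphism and rescale it by a positive constant until it becomes an isometry. Since $\rho$ and $\tau$ are equivalent, fix an intertwining isomorphism $\phi\colon V\to W$, so that $\phi\circ\rho(k)=\tau(k)\circ\phi$ for all $k\in K$. The first step is to transport the inner product of $W$ back to $V$: define $(u,v)':=(\phi u,\phi v)_W$ for $u,v\in V$. Because $\phi$ is a linear isomorphism, $(\cdot,\cdot)'$ is again an inner product on $V$ (positive definiteness uses precisely the injectivity of $\phi$), and because $\phi$ intertwines $\rho$ with $\tau$ while $(\cdot,\cdot)_W$ is $\tau$-invariant, a one-line computation shows that $(\cdot,\cdot)'$ is $\rho$-invariant.

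The second step is to invoke Corollary \ref{invariant:inner:products}, which applies since $\rho$ is a real irreducible representation of a compact Lie group: both $(\cdot,\cdot)'$ and $(\cdot,\cdot)_V$ are $\rho$-invariant inner products on $V$, so there exists $a>0$ with $(\cdot,\cdot)'=a(\cdot,\cdot)_V$, that is, $(\phi u,\phi v)_W=a(u,v)_V$ for all $u,v\in V$. Finally, set $T:=a^{-1/2}\phi$. Then $T$ is still an intertwining isomorphism, being a nonzero scalar multiple of $\phi$, and
$$(Tu,Tv)_W=a^{-1}(\phi u,\phi v)_W=a^{-1}\cdot a\,(u,v)_V=(u,v)_V,$$
so $T$ is the required inner-product-preserving intertwining operator.

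There is essentially no obstacle here; the only point deserving attention is checking that the pulled-back form $(\cdot,\cdot)'$ is genuinely an inner product and is $\rho$-invariant, so that Corollary \ref{invariant:inner:products} is legitimately applicable. The compactness of $K$ and the irreducibility of $\rho$ enter the argument only through that corollary, and the irreducibility of $\tau$ is not even needed beyond guaranteeing that an intertwining isomorphism $\phi$ exists.
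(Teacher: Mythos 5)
Your proof is correct and follows essentially the same strategy as the paper: transport one invariant inner product across an intertwining isomorphism, invoke Corollary \ref{invariant:inner:products} to identify it with the other up to a positive scalar, and rescale the isomorphism. The only (cosmetic) difference is direction — you pull $(\cdot,\cdot)_W$ back to $V$ and compare with $(\cdot,\cdot)_V$ there, while the paper pushes $(\cdot,\cdot)_V$ forward to $W$ and compares with $(\cdot,\cdot)_W$; your version is, if anything, slightly more streamlined.
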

    \begin{proof} Since $\rho$ and $\tau$ are equivalent representations, there exists a intertwining isomorphism $\tilde{T}:V\rightarrow W.$ Let $\mathcal{B}_V=\{v_1,...,v_d\}$ be a $(\cdot,\cdot)_V$-orthonormal basis of $V$ as in Theorem \ref{irreducible:real:representations}. Define an inner product $(\cdot,\cdot)_0$ by declaring the set $\mathcal{B}_0=\{\tilde{T}v_1,...,\tilde{T}v_d\}$ to be a $(\cdot,\cdot)_0$-orthonormal  basis. Clearly,
    \begin{equation}\label{auxiliar:tilde:isometry}\left(\tilde{T}u,\tilde{T}v\right)_0=(u,v)_V,\ u,v\in V.\end{equation} We shall show that $(\cdot,\cdot)_0$ is $\tau$-invariant. In fact, given $k\in K$ and $w_1,w_2\in W,$ we have that
    \begin{align*}
        (\tau(k)w_1,\tau(k)w_2)_0 =&\left(\tau(k)\tilde{T}\tilde{T}^{-1}w_1,\tau(k)\tilde{T}\tilde{T}^{-1}w_2\right)_0\\
        =&\left(\tilde{T}\rho(k)\tilde{T}^{-1}w_1,\tilde{T}\rho(k)\tilde{T}^{-1}w_2\right)_0\\
        =&\left(\rho(k)\tilde{T}^{-1}w_1,\rho(k)\tilde{T}^{-1}w_2\right)_V\hspace{1.44cm} \textnormal{by}\ \eqref{auxiliar:tilde:isometry}\\
        =&\left(\tilde{T}^{-1}w_1,\tilde{T}^{-1}w_2\right)_V\hspace{3cm} \textnormal{since}\ (\cdot,\cdot)_V\ \textnormal{is}\ \rho\textnormal{-invariant}\\
        =&\left(\tilde{T}\tilde{T}^{-1}w_1,\tilde{T}\tilde{T}^{-1}w_2\right)_0\hspace{2.49cm} \textnormal{by}\ \eqref{auxiliar:tilde:isometry}\\
        =&\left(w_1,w_2\right)_0.
    \end{align*}
        This means that $(\cdot,\cdot)_0$ is $\tau$-invariant so $(\cdot,\cdot)_0=a(\cdot,\cdot)_W$ for some $a>0$ (see Corollary \ref{invariant:inner:products}). Since $\tilde{T}$ is an intertwining operator, then $T:=\sqrt{a}\tilde{T}$ is also a intertwining operator, furthermore,
        $$(Tu,Tv)_W=\left(\sqrt{a}\tilde{T}u,\sqrt{a}\tilde{T}v\right)_W=a\left(\tilde{T}u,\tilde{T}v\right)_W=\left(\tilde{T}u,\tilde{T}v\right)_0=(u,v)_V.$$
    \end{proof}
    
    \begin{corollary}\label{equivariant:maps} Let $\rho:K\rightarrow\textnormal{GL}(V)$ and $\tau:K\rightarrow \textnormal{GL}(W)$ be equivalent real irreducible representations of a compact Lie group $K,$ let $(\cdot,\cdot)_V$ be a $\rho$-invariant inner product on $V$ and let $(\cdot,\cdot)_W$ be a $\tau$-invariant inner product on $W.$ Let $T:V\rightarrow W$ be an intertwining operator preserving these inner products. Let $\mathcal{B}_V$ a basis of $V$ as in Theorem \ref{irreducible:real:representations} and $\mathcal{B}_W:=T(\mathcal{B}_V).$ Then one of the following assertions holds: 
    \begin{itemize}
        \item[$i)$] Any equivariant map $S:V\rightarrow W$ can be written in the basis $\mathcal{B}_V$ and $\mathcal{B}_W$ as
    \begin{equation*}
        [S]_{\mathcal{B}_V\mathcal{B}_W}=\left(\begin{array}{cccc}
            \zeta & 0 & \cdots & 0\\
            0 & \zeta & \cdots & 0\\
            \vdots & \vdots & \ddots & \vdots\\
            0 & 0 & \cdots & \zeta
            \end{array}\right),\ \zeta\in\mathbb{R}.
    \end{equation*}
    \item[$ii)$] Any equivariant map $S:V\rightarrow W$ can be written in the bases $\mathcal{B}_V$ and $\mathcal{B}_W$ as
    \begin{equation*}
        [S]_{\mathcal{B}_V\mathcal{B}_W}=\left(\begin{array}{cccc}
            \zeta & 0 & \cdots & 0\\
            0 & \zeta & \cdots & 0\\
            \vdots & \vdots & \ddots & \vdots\\
            0 & 0 & \cdots & \zeta
            \end{array}\right),\ \zeta=\left(\begin{array}{cc}a & -b \\ b & a \end{array}\right),\ a,b\in\mathbb{R}.
    \end{equation*}
    \item[$iii)$] Any equivariant map $S:V\rightarrow W$ can be written in the bases $\mathcal{B}_V$ and $\mathcal{B}_W$ as
    \begin{equation*}
        \hspace{1.3cm}[S]_{\mathcal{B}_V\mathcal{B}_W}=\left(\begin{array}{cccc}
            \zeta & 0 & \cdots & 0\\
            0 & \zeta & \cdots & 0\\
            \vdots & \vdots & \ddots & \vdots\\
            0 & 0 & \cdots & \zeta
            \end{array}\right),\ \zeta=\left(\begin{array}{cccc}a & d & -b & -c \\ -d & a & c & -b\\ b & -c & a & -d\\ c & b & d & a \end{array}\right),\ a,b,c,d\in\mathbb{R}.
    \end{equation*}
    \end{itemize}
    \end{corollary}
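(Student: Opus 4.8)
The plan is to reduce everything to Theorem \ref{irreducible:real:representations} by transporting the normal form for equivariant \emph{endomorphisms} of $V$ across the isometric intertwining operator $T$. The first observation I would make is that for any equivariant map $S\colon V\to W$, the composition $T^{-1}\circ S\colon V\to V$ is an equivariant endomorphism of $V$: since $T$ intertwines $\rho$ and $\tau$, so does $T^{-1}$ (in the reverse direction), and composing with the $K$-equivariant $S$ shows $T^{-1}\circ S$ commutes with $\rho(k)$ for every $k\in K$. Thus $\mathrm{Hom}_K(V,W)$ is carried bijectively onto $\mathrm{End}_K(V)$ by $S\mapsto T^{-1}\circ S$, with inverse $R\mapsto T\circ R$.

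Next I would fix $\mathcal{B}_V$ to be precisely the $(\cdot,\cdot)_V$-orthonormal basis of $V$ provided by Theorem \ref{irreducible:real:representations}, so that \emph{every} equivariant endomorphism of $V$ — in particular each $T^{-1}\circ S$ — has a matrix in $\mathcal{B}_V$ of one of the three block-scalar shapes \eqref{end:orthogonal}, \eqref{end:complex}, \eqref{end:quaternionic}, with the shape (real, complex, or quaternionic type) determined once and for all by the equivalence class of $\rho$, independently of $S$. I then set $\mathcal{B}_W:=T(\mathcal{B}_V)$; this is a basis of $W$ because $T$ is a linear isomorphism, and it is $(\cdot,\cdot)_W$-orthonormal because $T$ preserves the inner products (here is where the hypothesis that $T$ is an \emph{isometric} intertwiner, available by Corollary \ref{intertwining:isometry}, is used).

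The only computation needed is the matrix identity $[S]_{\mathcal{B}_V\mathcal{B}_W}=[T^{-1}\circ S]_{\mathcal{B}_V}$. This holds because the matrix of $T$ with respect to the basis $\mathcal{B}_V$ on the domain and $\mathcal{B}_W=T(\mathcal{B}_V)$ on the codomain is the identity matrix: its $j$-th column is the coordinate vector of $T(v_j)$ in $\mathcal{B}_W$, which is $e_j$. Writing $S=T\circ(T^{-1}\circ S)$ and using that matrices multiply under composition gives $[S]_{\mathcal{B}_V\mathcal{B}_W}=[T]_{\mathcal{B}_V\mathcal{B}_W}\,[T^{-1}\circ S]_{\mathcal{B}_V}=[T^{-1}\circ S]_{\mathcal{B}_V}$. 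Substituting the three possible shapes for $[T^{-1}\circ S]_{\mathcal{B}_V}$ from Theorem \ref{irreducible:real:representations} yields exactly cases $i)$, $ii)$, $iii)$ of the corollary, and the corresponding case is the same for all $S$. (If one also wants the converse, that each such block-scalar matrix arises from an equivariant $S$, it follows by the same identity read backwards, using the converse half of Theorem \ref{irreducible:real:representations} and the fact that $S=T\circ(T^{-1}\circ S)$ is equivariant whenever $T^{-1}\circ S$ is.)

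I do not anticipate a genuine obstacle: the entire mathematical content is packaged in Theorem \ref{irreducible:real:representations}, and the role of $T$ is purely to conjugate that normal form from $\mathrm{End}_K(V)$ to $\mathrm{Hom}_K(V,W)$; the only thing requiring care is the bookkeeping of bases and the matrices of compositions, together with checking that $\mathcal{B}_W$ is a legitimate (orthonormal) basis.
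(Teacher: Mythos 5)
Your proposal is correct and follows essentially the same route as the paper: both reduce to Theorem \ref{irreducible:real:representations} via the equivariant endomorphism $T^{-1}\circ S$ of $V$ and the observation that, with $\mathcal{B}_W=T(\mathcal{B}_V)$, the matrix of $T$ (equivalently of $T^{-1}$) in these bases is the identity, so $[S]_{\mathcal{B}_V\mathcal{B}_W}=[T^{-1}\circ S]_{\mathcal{B}_V}$. Your remarks on the orthonormality of $\mathcal{B}_W$ and the converse direction are correct additions but not needed beyond what the paper records.
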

    \begin{proof} For any equivariant map $S:V\rightarrow W$ we have that $T^{-1}S:V\rightarrow V$ is an equivariant endomorphism of $V.$ Additionally, since $\mathcal{B}_W=T(\mathcal{B}_V)$ then $\left[T^{-1}\right]_{\mathcal{B}_W\mathcal{B}_V}=\textnormal{I}_m$, where $\textnormal{I}_m$ is the identity matrix of order $m=\dim W.$ Therefore,
        $$\left[T^{-1}S\right]_{\mathcal{B}_V}=\left[T^{-1}\right]_{\mathcal{B}_W\mathcal{B}_V}\left[S\right]_{\mathcal{B}_V\mathcal{B}_W}=\textnormal{I}_m\left[S\right]_{\mathcal{B}_V\mathcal{B}_W}=\left[S\right]_{\mathcal{B}_V\mathcal{B}_W}.$$
    From this last equality and Theorem \ref{irreducible:real:representations} follows the result. 
    \end{proof}
    \begin{remark}\label{matrix:representation:zeta} The complex number $a+ib$ and the quaternion $a+bi+cj+dk$ can be represented by the square matrices $$\left(\begin{array}{cc} a & -b \\ b & a \end{array}\right)\ \textnormal{and}\ \left(\begin{array}{cccc}a & d & -b & -c \\ -d & a & c & -b\\ b & -c & a & -d\\ c & b & d & a \end{array}\right)$$ respectively. Corollary \ref{equivariant:maps} essentially says that for irreducible real representations $\rho:K\rightarrow \textnormal{GL}(V)$ and $\tau:K\rightarrow \textnormal{GL}(W)$ of a compact Lie group, the set of all equivariant maps from $V$ to $W$ is isomorphic $\mathbb{R},$ $\mathbb{C}$ or $\mathbb{H},$ in symbols, $$\textnormal{Hom}_{\rho,\tau}(V,W)\cong\mathbb{R},\mathbb{C},\ \textnormal{or}\ \mathbb{H}.$$  
    \end{remark}
    We may apply Corollary \ref{equivariant:maps} to describe the metric operator $A$ associated with a $G$-invariant metric on a homogeneous $G$-space $M.$ Recall that we fixed a $\Ad(G)$-invariant inner product $(\cdot,\cdot)$ on $\mathfrak{g}.$ This inner product, when restricted to an isotropy summand $\mathfrak{m}_j,$ gives rise to an $\Ad(H)$-invariant inner product on $\mathfrak{m}_j.$ Unless the context lacks clarity, we will use the notation $(\cdot,\cdot)$ to refer to the restriction of $(\cdot,\cdot)$ to $\mathfrak{m}_j\times \mathfrak{m}_j$.
    \begin{lemma}\label{almost:main} Let $$\displaystyle M_i=\bigoplus\limits_{j\in C_i}\mathfrak{m}_j=\mathfrak{m}_{j_1}\oplus\cdots\oplus\mathfrak{m}_{j_t}$$ be an isotypical summand of the decomposition \eqref{Isotropy:decomposition}. Fix a $(\cdot,\cdot)$-orthonormal basis $\mathcal{B}_{j_1}$ of $\mathfrak{m}_{j_1}$ as in Theorem \ref{irreducible:real:representations}, and also fix linear isomorphisms $T_{j_1}^{j_k}:\mathfrak{m}_{j_1}\rightarrow\mathfrak{m}_{j_k},\ k=1,2,...,t$ such that:
    \begin{itemize}
        \item[$i)$] $T_{j_1}^{j_1}$ is the identity map of $\mathfrak{m}_{j_1}.$
        \item[$ii)$] $T_{j_1}^{j_k}$ is an intertwining operator between $\Ad^H\big{|}_
        {\mathfrak{m}_{j_1}}$ and $\Ad^H\big{|}_
        {\mathfrak{m}_{j_k}},$ for $k=2,...,t.$
        \item[$iii)$] $T_{j_1}^{j_k}$ preserves $(\cdot,\cdot)$ for $k=2,....,t.$
    \end{itemize} (note that these isomorphisms exist due to Corollary \ref{intertwining:isometry}). Let $\mathcal{B}_{j_k}:=T_{j_1}^{j_k}(\mathcal{B}_{j_1}),\ k=1,...,t.$ Then, for any metric operator $A$, we have that
    \begin{equation}\label{blocks:of:A}
        \left[P_{j_k}\circ A\circ i_{j_l}\right]_{\mathcal{B}_{j_l}\mathcal{B}_{j_k}}=\left(\begin{array}{cccc}
            \zeta_{j_l}^{j_k} & 0 & \cdots & \\
            0 & \zeta_{j_l}^{j_k} & \cdots & 0\\
            \vdots & \vdots & \ddots & \vdots\\
            0 & 0 & \cdots & \zeta_{j_l}^{j_k}
            \end{array}\right),\ l,k\in\{1,...,t\},
    \end{equation}
    where each $\zeta_{j_l}^{j_k}$ is either a real number, the matrix representation of a complex number, or the matrix representation of a quaternion (see Remark \ref{matrix:representation:zeta}). 
    \end{lemma}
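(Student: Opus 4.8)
The plan is to reduce the statement to Corollary~\ref{equivariant:maps} by transporting the block $P_{j_k}\circ A\circ i_{j_l}$ back to the reference summand $\mathfrak{m}_{j_1}$, where a basis ``as in Theorem~\ref{irreducible:real:representations}'' has already been fixed. Fix $l,k\in\{1,\dots,t\}$. Since $M_i$ is isotypical, the representations $\Ad^H\big{|}_{\mathfrak{m}_{j_r}}$, $r=1,\dots,t$, are irreducible and pairwise equivalent; in particular $\Ad^H\big{|}_{\mathfrak{m}_{j_l}}$ and $\Ad^H\big{|}_{\mathfrak{m}_{j_k}}$ are equivalent. By Proposition~\ref{A:Equivariant}, $P_{j_k}\circ A\circ i_{j_l}\colon\mathfrak{m}_{j_l}\to\mathfrak{m}_{j_k}$ is equivariant, and $T_{j_1}^{j_l}\colon\mathfrak{m}_{j_1}\to\mathfrak{m}_{j_l}$ is equivariant by hypothesis~$ii)$; hence the composite
$$F:=\bigl(P_{j_k}\circ A\circ i_{j_l}\bigr)\circ T_{j_1}^{j_l}\colon\mathfrak{m}_{j_1}\longrightarrow\mathfrak{m}_{j_k}$$
is an equivariant map between the equivalent irreducible representations $\Ad^H\big{|}_{\mathfrak{m}_{j_1}}$ and $\Ad^H\big{|}_{\mathfrak{m}_{j_k}}$.

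Next I would apply Corollary~\ref{equivariant:maps} to $F$, taking $V=\mathfrak{m}_{j_1}$, $W=\mathfrak{m}_{j_k}$, the intertwining isometry $T=T_{j_1}^{j_k}$ (it is an intertwining operator preserving $(\cdot,\cdot)$ by hypotheses~$ii)$ and $iii)$), the reference basis $\mathcal{B}_V=\mathcal{B}_{j_1}$ (chosen as in Theorem~\ref{irreducible:real:representations}), and $\mathcal{B}_W=T_{j_1}^{j_k}(\mathcal{B}_{j_1})=\mathcal{B}_{j_k}$. Corollary~\ref{equivariant:maps} then yields
$$[F]_{\mathcal{B}_{j_1}\mathcal{B}_{j_k}}=\left(\begin{array}{cccc}\zeta_{j_l}^{j_k} & 0 & \cdots & 0\\ 0 & \zeta_{j_l}^{j_k} & \cdots & 0\\ \vdots & \vdots & \ddots & \vdots\\ 0 & 0 & \cdots & \zeta_{j_l}^{j_k}\end{array}\right),$$
where $\zeta_{j_l}^{j_k}$ is a real number, the matrix representation of a complex number, or the matrix representation of a quaternion, according to which of the cases $i)$--$iii)$ of Theorem~\ref{irreducible:real:representations} holds for $\Ad^H\big{|}_{\mathfrak{m}_{j_1}}$ — and this case is the same for every pair $l,k$, since it depends only on the common equivalence class of the summands of $M_i$.

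Finally I would translate this equality back to the original block. By the rule for the matrix of a composition in (possibly distinct) bases, $[F]_{\mathcal{B}_{j_1}\mathcal{B}_{j_k}}=[P_{j_k}\circ A\circ i_{j_l}]_{\mathcal{B}_{j_l}\mathcal{B}_{j_k}}\,[T_{j_1}^{j_l}]_{\mathcal{B}_{j_1}\mathcal{B}_{j_l}}$; but $\mathcal{B}_{j_l}=T_{j_1}^{j_l}(\mathcal{B}_{j_1})$ by definition, so $[T_{j_1}^{j_l}]_{\mathcal{B}_{j_1}\mathcal{B}_{j_l}}$ is the identity matrix, whence $[P_{j_k}\circ A\circ i_{j_l}]_{\mathcal{B}_{j_l}\mathcal{B}_{j_k}}=[F]_{\mathcal{B}_{j_1}\mathcal{B}_{j_k}}$, which is exactly \eqref{blocks:of:A}. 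I do not expect a genuine obstacle here: once the bases $\mathcal{B}_{j_k}$ are declared to be the $T_{j_1}^{j_k}$-images of the single reference basis $\mathcal{B}_{j_1}$, all change-of-basis matrices collapse to the identity and the claim follows at once from Proposition~\ref{A:Equivariant} and Corollary~\ref{equivariant:maps}. The only points that demand care are bookkeeping ones: keeping straight which summand is the domain and which is the codomain in the notation $[\cdot]_{\mathcal{B}_{j_l}\mathcal{B}_{j_k}}$, and verifying that hypotheses $i)$--$iii)$ on the maps $T_{j_1}^{j_k}$ are precisely the data needed to invoke Corollary~\ref{equivariant:maps}.
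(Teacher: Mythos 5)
Your proof is correct and takes essentially the same route as the paper: both arguments reduce the claim to Corollary~\ref{equivariant:maps} via Proposition~\ref{A:Equivariant} and the transport isometries $T_{j_1}^{j_k}$. The only (minor) difference is that you pull the block back to $\mathfrak{m}_{j_1}$ by precomposing with $T_{j_1}^{j_l}$, whereas the paper applies the corollary directly between $\mathfrak{m}_{j_l}$ and $\mathfrak{m}_{j_k}$ using $T_{j_l}^{j_k}:=T_{j_1}^{j_k}\circ\left(T_{j_1}^{j_l}\right)^{-1}$; your variant has the slight advantage of invoking the corollary only on $\mathfrak{m}_{j_1}$, where the basis is literally the one fixed as in Theorem~\ref{irreducible:real:representations}, rather than implicitly using that $\mathcal{B}_{j_l}$ inherits that property.
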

    \begin{proof} Let $l,k\in\{1,...,t\}.$ By Proposition \ref{A:Equivariant} we have that $P_{j_k}\circ A\circ i_{j_l}$ is an equivariant map between the representations $\Ad^H\big{|}_{\mathfrak{m}_{j_l}}$ and $\Ad^H\big{|}_{\mathfrak{m}_{j_k}}.$ In addition, $$T_{j_l}^{j_k}:=T_{j_1}^{j_k}\circ\left(T_{j_1}^{j_l}\right)^{-1}:\mathfrak{m}_{j_l}\rightarrow\mathfrak{m}_{j_k}$$ is a intertwining isomorphism and $$T_{j_l}^{j_k}(\mathcal{B}_{j_l})=\mathcal{B}_{j_k}.$$ Hence, equation \eqref{blocks:of:A} is valid due to Corollary \ref{equivariant:maps}. 
    \end{proof}

    \begin{remark} With the notations introduced in the preceding Lemma, it is important to highlight that, for every $k\in\{1,...,t\},$ the set $\mathcal{B}_{j_k}$ is indeed a $(\cdot,\cdot)$-orthonormal basis of $\mathfrak{m}_{j_k}.$ This is a result of two observations: the orthonormality of $\mathcal{B}_{j_1}$ with respect to the inner product $(\cdot,\cdot),$ and the property that the map $T_{j_1}^{j_k}$ preserves $(\cdot,\cdot).$ Furthermore, the union $\tilde{B}_i:=\mathcal{B}_{j_1}\cup\cdots\cup\mathcal{B}_{j_t}$ forms a basis for the isotypical summand $M_i$. This procedure can be extended to all values of $i$ within the set $\{1,...,S\},$ leading to corresponding sets $\tilde{\mathcal{B}}_1,...,\tilde{\mathcal{B}}_S,$ each forming a basis for the respective isotypical summands $M_1,...,M_S.$ Since the isotropy summands $\mathfrak{m}_1,...,\mathfrak{m}_s$ are pairwise orthogonal with respect to $(\cdot,\cdot),$ then the set $\mathcal{B}:=\tilde{\mathcal{B}}_1\cup\cdots\cup\tilde{\mathcal{B}}_S$ serves as a  $(\cdot,\cdot)$-orthonormal basis for $\mathfrak{m}.$ A basis $\mathcal{B}$ of constructed in this manner will be referred to as a {\it $(\cdot,\cdot)$-orthonormal basis of $\mathfrak{m}$ adapted to the decomposition} $\mathfrak{m}=M_1\oplus\cdots\oplus M_S.$
    \end{remark}
The following theorem is now introduced, presenting a matrix-based characterization of invariant metrics on a homogeneous space of a compact Lie group.

\begin{theorem}\label{main:1}
    Let $G$ be a compact Lie group, $\mathfrak{g}$ the corresponding Lie algebra equipped with an $\Ad(G)$-invariant inner product $(\cdot,\cdot),$ $M=G/H$ a homogeneous $G$-space, $\mathfrak{h}$ the Lie algebra of $H$ and $\mathfrak{m}$ the $(\cdot,\cdot)$-orthogonal complement of $\mathfrak{h}$ in $\mathfrak{g}.$ Let \begin{equation}\label{istropy:decomposition:2}\mathfrak{m}=\mathfrak{m}_1\oplus\cdots\oplus\mathfrak{m}_s
   \end{equation} be a decomposition of $\mathfrak{m}$ into $\Ad(H)$-invariant irreducible pairwise $(\cdot,\cdot)$-orthogonal subspaces, let \begin{equation}\label{isotypical:decomposition}
       \mathfrak{m}=M_1\oplus\cdots\oplus M_S
   \end{equation} be a decomposition of $\mathfrak{m}$ into isotypical summands of \eqref{istropy:decomposition:2} and let $\mathcal{B}$ be a $(\cdot,\cdot)$-orthonormal basis adapted to \eqref{isotypical:decomposition}.
   \begin{itemize}
   \item[$(1)$] For any $G$-invariant metric $g$ on $M$, its associated metric operator $A$ can be written in the basis $\mathcal{B}$ as a block-diagonal matrix of the form:\\
   \begin{equation}\label{matrix:metric:operator}
            \left[A\right]_{\mathcal{B}}=\left(\begin{array}{cccc}
A_1 & 0 & \dots & 0\\
0 & A_2 & \dots & 0\\
\vdots & \vdots & \ddots & \vdots\\
0 & 0 & \dots & A_S\\
\end{array}\right),
        \end{equation}
        where $A_i$ is a square matrix of the following form:
        \begin{equation}\label{matrix:metric:operator:isotypical:summand}
            A_i=\left(\begin{array}{cccc}
\mu_{j_1} \textnormal{I}_{d_i} & \left(A^i_{21}\right)^T & \dots & \left(A^i_{t1}\right)^T\\
A^i_{21} & \mu_{j_2}\textnormal{I}_{d_i} & \dots & \left(A^i_{t2}\right)^T\\
\vdots & \vdots & \ddots & \vdots\\
A^i_{t1} & A^i_{t2} & \dots & \mu_{j_t}\textnormal{I}_{d_i}\\
\end{array}\right),
       \end{equation}
      where $\left(A^i_{lk}\right)^T$ represents the transpose of $A^i_{lk},$ $C_i=\{j_1,...,j_t\},$ $d_i=\dim\mathfrak{m}_{j_1},$ $\mu_{j_1},...,\mu_{j_t}>0$ and $$A^i_{lk}=\left(\begin{array}{cccc}
            \zeta_{j_l}^{j_k} & 0 & \cdots & \\
            0 & \zeta_{j_l}^{j_k} & \cdots & 0\\
            \vdots & \vdots & \ddots & \vdots\\
            0 & 0 & \cdots & \zeta_{j_l}^{j_k}
            \end{array}\right),$$ for some $\zeta_{j_l}^{j_k}$ that is either a real number, the matrix representation of a complex number, or the matrix representation of a quaternion.\\
        
           \item[$(2)$] Conversely, if a matrix is given by \begin{equation}\label{matrix}
                \left(\begin{array}{cccc}
A_1 & 0 & \dots & 0\\
0 & A_2 & \dots & 0\\
\vdots & \vdots & \ddots & \vdots\\
0 & 0 & \dots & A_S\\
\end{array}\right),
\end{equation} where, each $A_i$ has the form of \eqref{matrix:metric:operator:isotypical:summand}, with $$A^i_{lk}\left[\Ad(h)\big{|}_{\mathfrak{m}_{j_1}}\right]_{\mathcal{B}_{j_1}}=\left[\Ad(h)\big{|}_{\mathfrak{m}_{j_1}}\right]_{\mathcal{B}_{j_1}}A^i_{lk},\ l,k\in\{1,...,t\},$$
and this matrix is positive-definite, then the operator $A:\mathfrak{m}\rightarrow\mathfrak{m}$ defined in the basis $\mathcal{B}$ using the formula given in equation \eqref{matrix:metric:operator} corresponds to the metric operator associated with a certain $G$-invariant metric $g.$
    \end{itemize}
    \end{theorem}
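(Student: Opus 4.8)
The plan is to handle the two parts separately, in each case reducing the statement to the three intrinsic properties of a metric operator isolated in Proposition \ref{properties:metric:operator} --- positive-definiteness, self-adjointness with respect to $(\cdot,\cdot)$, and commutation with $\Ad(h)\big{|}_{\mathfrak{m}}$ for every $h\in H$ --- and using the adapted basis $\mathcal{B}$ as the device that converts these three abstract conditions into the concrete block shape claimed. Throughout, it is convenient to abbreviate, for an isotypical summand $M_i=\mathfrak{m}_{j_1}\oplus\cdots\oplus\mathfrak{m}_{j_t}$, the matrix $R_i(h):=\big[\Ad(h)\big{|}_{\mathfrak{m}_{j_1}}\big]_{\mathcal{B}_{j_1}}$.

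For part $(1)$, I would start from a $G$-invariant metric $g$ with metric operator $A$ and read off its matrix $[A]_{\mathcal{B}}$ block by block along the decomposition \eqref{istropy:decomposition:2}. Whenever $\mathfrak{m}_j$ and $\mathfrak{m}_k$ sit in different isotypical summands they are inequivalent, so Corollary \ref{j:k:0} gives $P_k\circ A\circ i_j=0$; hence $[A]_{\mathcal{B}}$ is block-diagonal along \eqref{isotypical:decomposition}, producing the blocks $A_1,\dots,A_S$. Inside a fixed $M_i$, Lemma \ref{almost:main} already shows that in the adapted bases $\mathcal{B}_{j_k},\mathcal{B}_{j_l}$ the block $P_{j_l}\circ A\circ i_{j_k}$ has the prescribed $\zeta$-diagonal form, which produces the matrices $A^i_{lk}$. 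Self-adjointness of $A$ together with the $(\cdot,\cdot)$-orthonormality of the $\mathcal{B}_{j_k}$ forces the matrix of $P_{j_k}\circ A\circ i_{j_l}$ to be the transpose of that of $P_{j_l}\circ A\circ i_{j_k}$, which yields the symmetric placement of off-diagonal blocks in \eqref{matrix:metric:operator:isotypical:summand}. For the diagonal blocks, $P_{j_k}\circ A\circ i_{j_k}$ is a self-adjoint equivariant endomorphism of the irreducible module $\mathfrak{m}_{j_k}$, so by Theorem \ref{irreducible:real:representations} its matrix in $\mathcal{B}_{j_k}$ has $\zeta$-diagonal form with $\zeta$ real, complex, or quaternionic; symmetry kills the skew part of $\zeta$, so $\zeta$ is a real scalar $\mu_{j_k}$ and $A^i_{kk}=\mu_{j_k}\textnormal{I}_{d_i}$ --- this is exactly the linear-algebra step used in the proof of Corollary \ref{invariant:inner:products}. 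Finally, for $X\in\mathfrak{m}_{j_k}-\{0\}$ one has $(AX,X)=(P_{j_k}AX,X)=\mu_{j_k}(X,X)>0$, so $\mu_{j_k}>0$.

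For part $(2)$, I would define $A$ by the given matrix in the basis $\mathcal{B}$ and verify the three properties of Proposition \ref{properties:metric:operator}. Positive-definiteness is assumed. Self-adjointness is immediate: each $A_i$ is symmetric (its diagonal blocks $\mu_{j_k}\textnormal{I}_{d_i}$ are symmetric and the $(l,k)$ and $(k,l)$ off-diagonal blocks are mutual transposes by construction), distinct isotypical blocks do not interact, and $\mathcal{B}$ is $(\cdot,\cdot)$-orthonormal. For the commutation relation, the key observation is that because $\mathcal{B}$ is adapted --- $\mathcal{B}_{j_k}=T_{j_1}^{j_k}(\mathcal{B}_{j_1})$ with each $T_{j_1}^{j_k}$ an intertwining operator --- the matrix $\big[\Ad(h)\big{|}_{\mathfrak{m}}\big]_{\mathcal{B}}$ is block-diagonal along \eqref{istropy:decomposition:2} and, within each $M_i$, equals the block-diagonal matrix whose $t$ diagonal blocks are all equal to $R_i(h)$. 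Since $[A]_{\mathcal{B}}$ is block-diagonal along \eqref{isotypical:decomposition}, it suffices to check that each $A_i$ commutes with that repeated-block matrix, which reduces block by block to the trivial identity $\mu_{j_k}\textnormal{I}_{d_i}R_i(h)=R_i(h)\mu_{j_k}\textnormal{I}_{d_i}$ on the diagonal and to the hypothesis $A^i_{lk}R_i(h)=R_i(h)A^i_{lk}$ off the diagonal. Hence $A$ commutes with every $\Ad(h)\big{|}_{\mathfrak{m}}$, and Proposition \ref{properties:metric:operator} identifies $A$ as the metric operator of a $G$-invariant metric $g$.

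The routine part is the matrix bookkeeping; the one place that needs care is the structure of $\big[\Ad(h)\big{|}_{\mathfrak{m}}\big]_{\mathcal{B}}$ inside an isotypical summand --- establishing that it really is the block-diagonal matrix with all diagonal blocks equal to $R_i(h)$ is what makes the commutation argument work in both directions, and this rests squarely on the intertwining property of the maps $T_{j_1}^{j_k}$ built into the definition of an \emph{adapted} basis, which is why that notion was set up the way it was in Lemma \ref{almost:main} and the remark following it.
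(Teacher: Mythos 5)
Your proposal follows essentially the same route as the paper's proof: Corollary \ref{j:k:0} for block-diagonality along the isotypical decomposition, Lemma \ref{almost:main} for the $\zeta$-diagonal form of the blocks, self-adjointness in the orthonormal adapted basis for the transpose symmetry and the reality of the diagonal $\zeta$'s, the evaluation $(AX,X)=\mu_{j_l}\Vert X\Vert^2$ for positivity, and, in the converse direction, the identity $\left[\Ad(h)\big{|}_{\mathfrak{m}_{j_1}}\right]_{\mathcal{B}_{j_1}}=\left[\Ad(h)\big{|}_{\mathfrak{m}_{j_k}}\right]_{\mathcal{B}_{j_k}}$ coming from the intertwining property of the $T_{j_1}^{j_k}$, which is exactly the ``one place that needs care'' you identify.

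One step in part $(2)$ is glossed over: the blocks of $A_i$ sitting above the diagonal are the transposes $\left(A^i_{lk}\right)^T$, and the hypothesis only asserts that $A^i_{lk}$ itself commutes with $R_i(h)$. You still need to check that $\left(A^i_{lk}\right)^T$ commutes with $R_i(h)$ as well before the blockwise commutation goes through. The paper does this explicitly by writing $A^i_{lk}=a\textnormal{I}_{d_i}+B^i_{lk}$ with $B^i_{lk}$ skew-symmetric and passing the commutation relation through $-B^i_{lk}=\left(B^i_{lk}\right)^T$; alternatively one can observe that $R_i(h)$ is orthogonal (since $\Ad(h)$ preserves $(\cdot,\cdot)$ and $\mathcal{B}_{j_1}$ is orthonormal), so transposing $A^i_{lk}R_i(h)=R_i(h)A^i_{lk}$ and conjugating by $R_i(h)$ gives the needed identity. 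This is a one-line fix, not a flaw in the approach.
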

    \begin{proof} For the proof of $(1)$ observe that, according to Corollary \ref{j:k:0}, $P_k\circ A\circ i_j=0$ whenever $\mathfrak{m}_j\ncong\mathfrak{m}_k,$ thus $A(M_i)\subseteq M_i,\ i=1,...,S.$ Consequently, \begin{equation*}
        \left[A\right]_{\mathcal{B}}=\left(\begin{array}{cccc}
A_1 & 0 & \dots & 0\\
0 & A_2 & \dots & 0\\
\vdots & \vdots & \ddots & \vdots\\
0 & 0 & \dots & A_S\\
\end{array}\right),
    \end{equation*} where $A_i$ is the matrix of the restriction $A\big{|}_{M_i}:M_i\rightarrow M_i$ in the basis $\tilde{\mathcal{B}}_i.$ Since $\tilde{\mathcal{B}}_i=\mathcal{B}_{j_1}\cup\cdots\cup\mathcal{B}_{j_t},$ then 
    \begin{equation*}
        A_i=\left(\begin{array}{cccc}
\left[P_{j_1}\circ A\circ i_{j_1}\right]_{\mathcal{B}_{j_1}\mathcal{B}_{j_1}} & \left[P_{j_1}\circ A\circ i_{j_2}\right]_{\mathcal{B}_{j_2}\mathcal{B}_{j_1}} & \dots & \left[P_{j_1}\circ A\circ i_{j_t}\right]_{\mathcal{B}_{j_t}\mathcal{B}_{j_1}}\\
\left[P_{j_2}\circ A\circ i_{j_1}\right]_{\mathcal{B}_{j_1}\mathcal{B}_{j_2}} & \left[P_{j_2}\circ A\circ i_{j_2}\right]_{\mathcal{B}_{j_2}\mathcal{B}_{j_2}} & \dots & \left[P_{j_2}\circ A\circ i_{j_t}\right]_{\mathcal{B}_{j_t}\mathcal{B}_{j_2}}\\
\vdots & \vdots & \ddots & \vdots\\
\left[P_{j_t}\circ A\circ i_{j_1}\right]_{\mathcal{B}_{j_1}\mathcal{B}_{j_t}} & \left[P_{j_t}\circ A\circ i_{j_2}\right]_{\mathcal{B}_{j_2}\mathcal{B}_{j_t}} & \dots & \left[P_{j_t}\circ A\circ i_{j_t}\right]_{\mathcal{B}_{j_t}\mathcal{B}_{j_t}}\\
\end{array}\right).
    \end{equation*} By Lemma \ref{almost:main}, for all $l,k\in\{1,...,t\},$ there exists $\zeta_{j_l}^{j_k},$ that is either a real number, the matrix representation of a complex number, or the matrix representation of a quaternion, such that
    $$A^i_{lk}:=\left[P_{j_k}\circ A\circ i_{j_l}\right]_{\mathcal{B}_{j_l}\mathcal{B}_{j_k}}=\left(\begin{array}{cccc}
            \zeta_{j_l}^{j_k} & 0 & \cdots & \\
            0 & \zeta_{j_l}^{j_k} & \cdots & 0\\
            \vdots & \vdots & \ddots & \vdots\\
            0 & 0 & \cdots & \zeta_{j_l}^{j_k}
            \end{array}\right).$$
    Since $A$ is self-adjoint with respect to $(\cdot,\cdot)$ and $\mathcal{B}$ is $(\cdot,\cdot)$-orthonormal, then $\left[A\right]_{\mathcal{B}}$ is symmetric, so $(A_i)^T=A_i,$ and consequently, $$A^i_{lk}:=\left[P_{j_k}\circ A\circ i_{j_l}\right]_{\mathcal{B}_{j_l}\mathcal{B}_{j_k}}=\left[P_{j_l}\circ A\circ i_{j_k}\right]^T_{\mathcal{B}_{j_k}\mathcal{B}_{j_l}}=\left(A_{kl}^i\right)^T.$$ In particular, $A_{ll}^i$ is symmetric, so $\mu_{j_l}:=\zeta_{j_l}^{j_l}$ must be a real number. To see that it is positive, observe that for any  $X\in\mathfrak{m}_{j_l}-\{0\},$ we have that $Q_{j_l}(AX)\in\bigoplus\limits_{r\neq j_l}\mathfrak{m}_r,$ so
    \begin{align*}
        (AX,X)=(P_{j_l}(AX)+Q_{j_l}(AX),X)=(P_{j_l}(AX),X).
        \end{align*}
    Additionally, $i_{j_l}(X)=X$, so
    $$(AX,X)=(P_{j_l}(AX),X)=\left(\left[P_{j_l}\circ A\circ i_{j_l}\right](X),X\right)=(\mu_{j_l}X,X)=\mu_{j_l}\vert\vert X\vert\vert^2.$$
    Since $A$ is positive-definite, then $$\mu_{j_l}\vert\vert X\vert\vert^2=(AX,X)>0.$$ Hence, $\mu_{j_l}>0.$ We have proven $(1).$\\

    For the proof of $(2)$, assume that the matrix \eqref{matrix} is positive-definite. By virtue of Proposition \ref{properties:metric:operator}, it is enough to show that the operator $A:\mathfrak{m}\rightarrow\mathfrak{m}$ defined by \eqref{matrix:metric:operator} is self-adjoint with respect to $(\cdot,\cdot)$ and commutes with $\Ad(h)\big{|}_{\mathfrak{m}}$ for all $h\in H.$ Observe that the basis $\mathcal{B}$ is orthonormal with respect to $(\cdot,\cdot)$ and that the matrix of $A$ in this basis is symmetric, therefore $A$ is self-adjoint. To prove that $A$ commutes with $\Ad(h)\big{|}_{\mathfrak{m}}$ for all $h\in H,$ we may write $\Ad(h)\big{|}_{\mathfrak{m}}$ in the basis $\mathcal{B}.$ Let $h\in H$ and assume that $\mathcal{B}_{j_1}=\{X_1,...,X_{d_i}\}$ so that $\mathcal{B}_{j_k}=\left\{T_{j_1}^{j_k}(X_1),...,T_{j_1}^{j_k}(X_{d_i})\right\}$ for every $k\in\{1,...,t\},$ where $T_{j_1}^{j_k}$ satisfies the conditions $i)$--$iii)$ of Lemma \ref{almost:main}. In particular, \begin{equation}\label{auxiliar:T:intertwining}
        T_{j_1}^{j_k}\circ\Ad(h)\big{|}_{\mathfrak{m}_{j_1}}=\Ad(h)\big{|}_{\mathfrak{m}_{j_k}}\circ T_{j_1}^{j_k}.    
    \end{equation}
    Since $\mathfrak{m}_{j_1}$ is $\Ad(h)$-invariant, then for each $r\in\{1,...,d_i\},$ there exits $\alpha_{1r},...,\alpha_{d_ir}\in\mathbb{R}$ such that \begin{equation}\label{matrix:m_{j_1}}\Ad(h)\big{|}_{\mathfrak{m}_{j_1}}\left(X_r\right)=\alpha_{1r}X_1+\cdots+\alpha_{d_ir}X_{d_i}.\end{equation} Applying $T_{j_1}^{j_k}$ in both sides of this equality obtain \begin{align*}T_{j_1}^{j_k}\left(\Ad(h)\big{|}_{\mathfrak{m}_{j_1}}\right)(X_r)=&T_{j_1}^{j_k}\left(\alpha_{1r}X_1+\cdots+\alpha_{d_ir}X_{d_i}\right)\\
    =&\alpha_{1r}T_{j_1}^{j_k}\left(X_1\right)+\cdots+\alpha_{d_ir}T_{j_1}^{j_k}\left(X_{d_i}\right),
    \end{align*}
    which, by \eqref{auxiliar:T:intertwining}, implies that 
    \begin{equation}\label{matrix:m_{j_k}}
        \Ad(h)\big{|}_{\mathfrak{m}_{j_k}}\left(T_{j_1}^{j_k}\left(X_r\right)\right)=\alpha_{1r}T_{j_1}^{j_k}\left(X_1\right)+\cdots+\alpha_{dr}T_{j_1}^{j_k}\left(X_d\right).
    \end{equation}
    The equalities \eqref{matrix:m_{j_1}} and \eqref{matrix:m_{j_k}} give us
    \begin{equation*}
        \left[\Ad(h)\big{|}_{\mathfrak{m}_{j_1}}\right]_{\mathcal{B}_{j_1}}=\left[\Ad(h)\big{|}_{\mathfrak{m}_{j_k}}\right]_{\mathcal{B}_{j_k}}.
    \end{equation*}
    By defining $D_i:=\left[\Ad(h)\big{|}_{\mathfrak{m}_{j_1}}\right]_{\mathcal{B}_{j_1}}$ we have that the restriction $\Ad(h)\big{|}_{M_i}:M_i\rightarrow M_i$ is expressed in the basis $\tilde{\mathcal{B}}_i=\mathcal{B}_{j_1}\cup\cdots\cup\mathcal{B}_{j_t}$ as \begin{equation}\label{D_i}
        \left[\Ad(h)\big{|}_{M_i}\right]_{\tilde{\mathcal{B}}_i}=\left(\begin{array}{cccc} D_i & 0 & \cdots & 0 \\ 0 & D_i &\cdots & 0 \\ \vdots & \vdots & \ddots & \vdots \\ 0 & 0 & \cdots & D_i \end{array}\right).
    \end{equation}
    Note that $A^i_{lk}=a\textnormal{I}_{d_i}+B^i_{lk}$ where $B_{lk}^i$ is a skew-symmetric matrix. Since $A^i_{lk}$ commutes with $\left[\Ad(h)\big{|}_{\mathfrak{m}_{j_1}}\right]_{\mathcal{B}_{j_1}}=D_i,$ then \begin{align*}
        \left(a\textnormal{I}_{d_i}+B_{lk}^i\right)D_i=D_i\left(a\textnormal{I}_{d_i}+B_{lk}^i\right) \Longrightarrow\ & B_{lk}^{i}D_i=D_iB_{lk}^{i}\\
        \Longrightarrow\ & \left(-B_{lk}^{i}\right)D_i=D_i\left(-B_{lk}^{i}\right)\\
        \Longrightarrow\ & \left(B_{lk}^{i}\right)^TD_i=D_i\left(B_{lk}^{i}\right)^T\\
        \Longrightarrow\ & aD_i+\left(B_{lk}^{i}\right)^TD_i=D_i\left(B_{lk}^{i}\right)^T+aD_i\\
        \Longrightarrow\ & \left(a\textnormal{I}+\left(B_{lk}^{i}\right)^T\right)D_i=\left(\left(B_{lk}^{i}\right)^T+a\textnormal{I}\right)D_i\\
        \Longrightarrow\ & \left(a\textnormal{I}+B_{lk}^{i}\right)^TD_i=\left(B_{lk}^{i}+a\textnormal{I}\right)^TD_i\\
        \Longrightarrow\ & \left(A_{lk}^{i}\right)^TD_i=D_i\left(A_{lk}^{i}\right)^T.\\
    \end{align*} Hence \begin{align*}
        A_i\left[\Ad(h)\big{|}_{M_i}\right]_{\tilde{\mathcal{B}}_i}=&\left(\begin{array}{cccc}
\mu_{j_1} \textnormal{I}_{d_i} & \left(A^i_{21}\right)^T & \dots & \left(A^i_{t1}\right)^T\\
A^i_{21} & \mu_{j_2}\textnormal{I}_{d_i} & \dots & \left(A^i_{t2}\right)^T\\
\vdots & \vdots & \ddots & \vdots\\
A^i_{t1} & A^i_{t2} & \dots & \mu_{j_t}\textnormal{I}_{d_i}\\
\end{array}\right)\left(\begin{array}{cccc} D_i & 0 & \cdots & 0 \\ 0 & D_i &\cdots & 0 \\ \vdots & \vdots & \ddots & \vdots \\ 0 & 0 & \cdots & D_i \end{array}\right)\\
    \\
    =&\left(\begin{array}{cccc}
\mu_{j_1}D_i& \left(A^i_{21}\right)^TD_i & \dots & \left(A^i_{t1}\right)^TD_i\\
A^i_{21}D_i & \mu_{j_2}D_i & \dots & \left(A^i_{t2}\right)^TD_i\\
\vdots & \vdots & \ddots & \vdots\\
A^i_{t1}D_i & A^i_{t2}D_i & \dots & \mu_{j_t}D_i\\
\end{array}\right)\\
\\
    =&\left(\begin{array}{cccc}
\mu_{j_1}D_i& D_i\left(A^i_{21}\right)^T & \dots & D_i\left(A^i_{t1}\right)^T\\
D_iA^i_{21} & \mu_{j_2}D_i & \dots & D_i\left(A^i_{t2}\right)^T\\
\vdots & \vdots & \ddots & \vdots\\
D_iA^i_{t1} & D_iA^i_{t2} & \dots & \mu_{j_t}D_i\\
\end{array}\right)\\
\\
=&\left(\begin{array}{cccc} D_i & 0 & \cdots & 0 \\ 0 & D_i &\cdots & 0 \\ \vdots & \vdots & \ddots & \vdots \\ 0 & 0 & \cdots & D_i 
\end{array}\right)\left(\begin{array}{cccc}
\mu_{j_1} \textnormal{I}_{d_i} & \left(A^i_{21}\right)^T & \dots & \left(A^i_{t1}\right)^T\\
A^i_{21} & \mu_{j_2}\textnormal{I}_{d_i} & \dots & \left(A^i_{t2}\right)^T\\
\vdots & \vdots & \ddots & \vdots\\
A^i_{t1} & A^i_{t2} & \dots & \mu_{j_t}\textnormal{I}_{d_i}\\
\end{array}\right)\\
\\
=& \left[\Ad(h)\big{|}_{M_i}\right]_{\tilde{\mathcal{B}}_i}A_i.
    \end{align*}
    This holds for all $i\in\{1,...,S\},$ so
    \begin{align*}
        \left[A\right]_{\mathcal{B}}\left[\Ad(h)\big{|}_{\mathfrak{m}}\right]_{\mathcal{B}}=&\left(\begin{array}{ccc} A_1 & \cdots & 0 \\ \vdots & \ddots & \vdots \\ 0 & \cdots & A_S 
    \end{array}\right)\left(\begin{array}{ccc} \left[\Ad(h)\big{|}_{M_1}\right]_{\tilde{\mathcal{B}}_1} & \cdots & 0 \\ \vdots & \ddots & \vdots \\ 0 & \cdots & \left[\Ad(h)\big{|}_{M_S}\right]_{\tilde{\mathcal{B}}_S} 
    \end{array}\right)\\
    \\
    =&\left(\begin{array}{ccc} A_1\left[\Ad(h)\big{|}_{M_1}\right]_{\tilde{\mathcal{B}}_1} & \cdots & 0 \\ \vdots & \ddots & \vdots \\ 0 & \cdots & A_S\left[\Ad(h)\big{|}_{M_S}\right]_{\tilde{\mathcal{B}}_S} 
    \end{array}\right)\\
    \\
    =&\left(\begin{array}{ccc} \left[\Ad(h)\big{|}_{M_1}\right]_{\tilde{\mathcal{B}}_1}A_1 & \cdots & 0 \\ \vdots & \ddots & \vdots \\ 0 & \cdots & \left[\Ad(h)\big{|}_{M_S}\right]_{\tilde{\mathcal{B}}_S}A_S 
    \end{array}\right)\\
    \\
    =&\left(\begin{array}{ccc} \left[\Ad(h)\big{|}_{M_1}\right]_{\tilde{\mathcal{B}}_1} & \cdots & 0 \\ \vdots & \ddots & \vdots \\ 0 & \cdots & \left[\Ad(h)\big{|}_{M_S}\right]_{\tilde{\mathcal{B}}_S} 
    \end{array}\right)\left(\begin{array}{ccc} A_1 & \cdots & 0 \\ \vdots & \ddots & \vdots \\ 0 & \cdots & A_S 
    \end{array}\right)\\
    \\
    =&\left[\Ad(h)\big{|}_{\mathfrak{m}}\right]_{\mathcal{B}}\left[A\right]_{\mathcal{B}}.
    \end{align*}
    This shows that $A$ commutes with $\Ad(h)\big{|}_{\mathfrak{m}}.$ The proof is complete.
    \end{proof}
    \begin{remark}
        Let us introduce some useful notations.  Consider two indices $p,q\in\{1,...,s\}.$ If $\mathfrak{m}_p$ is not equivalent to $\mathfrak{m}_q,$ we define $T_p^q:\mathfrak{m}_p\rightarrow\mathfrak{m}_q$ as the zero map. On the other hand, if there exists $i\in\{1,...,S\}$ such that both $p$ and $q$ belong to $C_i=\{j_1,...,j_t\},$ there exist $l,k\in\{1,...,t\}$ such that $p=j_l$ and $q=j_k.$ In this case, we define $T_p^q:=T_{j_l}^{j_k}=T_{j_1}^{j_k}\circ \left(T_{j_1}^{j_l}\right)^{-1},$ as explained in the proof of Lemma \ref{almost:main}. Observe that $T_q^p=(T_p^q)^{-1}.$ To summarize, we have a family $\{T_p^q:p,q\in\{1,...,s\}\}$ of linear operators that satisfies the following properties:
    \begin{itemize}
        \item[$i)$] $T_p^p=\textnormal{I}_{\mathfrak{m}_p},$ $p=1,...,s.$\\
        
        \item[$ii)$] $T_p^q=0$ whenever $\mathfrak{m}_p\ncong\mathfrak{m}_q.$\\
        
        \item[$iii)$] If $\mathfrak{m}_p\cong\mathfrak{m}_q,$ then $T_p^q:\mathfrak{m}_p\rightarrow\mathfrak{m}_q$ is a linear isomorphism that preserves $(\cdot,\cdot)$ and $T_p^q=(T_p^q)^{-1}.$
    \end{itemize}
    \end{remark}
    The following corollary is a particular case of Theorem \ref{main:1}. Nevertheless, we are presenting it separately, as it will be instrumental in Section \ref{section:3} for establishing the main result of this paper\begin{corollary}\label{instrumental:corollary} Let $\mu_1,...,\mu_s>0.$ Then, there exist $\epsilon>0$ such that for every family $\{a_{pq}:p,q\in\{1,...,s\},\ p\neq q\}$ of real numbers satisfying $|a_{pq}|<\epsilon$ and $a_{pq}=a_{qp},$ the linear map $A:\mathfrak{m}\rightarrow \mathfrak{m}$ defined as
    \begin{equation}\label{metric:operator:epsilon}A(X)=\mu_pX+\sum\limits_{q\neq p}a_{pq}T_p^q(X),\ X\in\mathfrak{m}_p,\ p=1,...,s;\end{equation} is the metric operator associated with a $G$-invariant metric on $M.$
    \end{corollary}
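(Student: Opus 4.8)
The plan is to verify that the operator $A$ defined by \eqref{metric:operator:epsilon} fulfils the hypotheses of part $(2)$ of Theorem \ref{main:1} once $\epsilon$ is chosen small enough. If $s=1$ the index family is empty, $A=\mu_1\mathrm{I}_{\mathfrak{m}}$ is the metric operator of the invariant metric $\mu_1(\cdot,\cdot)$, and any $\epsilon>0$ works; so from now on I assume $s\ge 2$.

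First I would write $A$ in a $(\cdot,\cdot)$-orthonormal basis $\mathcal{B}$ adapted to $\mathfrak{m}=M_1\oplus\cdots\oplus M_S$. Because $T_p^q=0$ whenever $\mathfrak{m}_p\ncong\mathfrak{m}_q$, formula \eqref{metric:operator:epsilon} shows $A(M_i)\subseteq M_i$, so $[A]_{\mathcal{B}}$ is block-diagonal with blocks $A_i=[A|_{M_i}]_{\tilde{\mathcal{B}}_i}$. Fix $i$ with $C_i=\{j_1,\dots,j_t\}$ and $d_i=\dim\mathfrak{m}_{j_1}$, and recall $\mathcal{B}_{j_k}=T_{j_1}^{j_k}(\mathcal{B}_{j_1})$. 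For $l\ne k$, the $\mathfrak{m}_{j_l}$-component of $A$ on $\mathfrak{m}_{j_k}$ is $a_{j_kj_l}T_{j_k}^{j_l}$, and since $T_{j_k}^{j_l}=T_{j_1}^{j_l}\circ(T_{j_1}^{j_k})^{-1}$ carries $\mathcal{B}_{j_k}$ onto $\mathcal{B}_{j_l}$ vector by vector, its matrix in these bases is $\mathrm{I}_{d_i}$; hence the off-diagonal block is $a_{j_lj_k}\mathrm{I}_{d_i}$, while the diagonal blocks are $\mu_{j_l}\mathrm{I}_{d_i}$. This is precisely the shape \eqref{matrix:metric:operator:isotypical:summand} with $\zeta^{j_k}_{j_l}=a_{j_lj_k}\in\mathbb{R}$, the symmetry built into \eqref{matrix:metric:operator:isotypical:summand} being respected thanks to $a_{pq}=a_{qp}$. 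Moreover, each off-diagonal block is a scalar multiple of the identity, hence commutes with $[\mathrm{Ad}(h)|_{\mathfrak{m}_{j_1}}]_{\mathcal{B}_{j_1}}$ for every $h\in H$, so the commutativity condition of Theorem \ref{main:1}$(2)$ holds automatically.

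It remains to pick $\epsilon$ making $[A]_{\mathcal{B}}$ positive-definite. Set $\mu_{\min}:=\min\{\mu_1,\dots,\mu_s\}>0$ and take $\epsilon:=\mu_{\min}/s$. The matrix $[A]_{\mathcal{B}}$ is symmetric, and a row of it indexed by a vector of $\mathcal{B}_p\subset\mathfrak{m}_p$ has diagonal entry $\mu_p$ and, since each off-diagonal block $a_{qp}\mathrm{I}_{d}$ is diagonal, at most one off-diagonal entry (of absolute value $|a_{pq}|$) for each $q\ne p$ with $\mathfrak{m}_q\cong\mathfrak{m}_p$. Hence the corresponding Gershgorin radius is at most $\sum_{q\ne p}|a_{pq}|\le (s-1)\epsilon$, so by Gershgorin's circle theorem every (real) eigenvalue $\lambda$ of $[A]_{\mathcal{B}}$ satisfies $\lambda\ge \mu_p-(s-1)\epsilon\ge \mu_{\min}-(s-1)\mu_{\min}/s=\mu_{\min}/s>0$. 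Thus $[A]_{\mathcal{B}}$ is positive-definite and Theorem \ref{main:1}$(2)$ yields that $A$ is the metric operator of a $G$-invariant metric on $M$. No step here is genuinely hard; the only point requiring care is the bookkeeping that identifies the off-diagonal blocks of $A$ with the scalar matrices $a_{pq}\mathrm{I}$, and one may replace the Gershgorin estimate by the observation that positive-definiteness is an open condition holding at $a_{pq}=0$, which equally produces such an $\epsilon$.
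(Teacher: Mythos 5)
Your proposal is correct, and the reduction to Theorem \ref{main:1}$(2)$ — checking that $[A]_{\mathcal{B}}$ is block-diagonal with blocks of the prescribed shape, that the off-diagonal blocks are the scalar matrices $a_{pq}\mathrm{I}$ (hence commute with $[\Ad(h)|_{\mathfrak{m}_{j_1}}]_{\mathcal{B}_{j_1}}$), and that only positive-definiteness remains — is exactly what the paper does. Where you genuinely diverge is in the positivity step. The paper writes $[A]$ in a rearranged basis, forms the characteristic polynomial $Q_A(z)=z^d+f_1(\mathbf{a})z^{d-1}+\cdots+f_d(\mathbf{a})$, and invokes a theorem on the continuous dependence of polynomial roots on coefficients (Harris--Martin) to conclude that for $|a_{pq}|$ small all eigenvalues stay in small balls around the $\mu_p$ and hence are positive; this produces an $\epsilon$ non-constructively. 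You instead observe that each row of the symmetric matrix $[A]_{\mathcal{B}}$ has diagonal entry $\mu_p$ and at most $s-1$ off-diagonal entries each bounded by $\epsilon$, so Gershgorin's circle theorem gives every eigenvalue $\lambda\ge\mu_{\min}-(s-1)\epsilon$, and the explicit choice $\epsilon=\mu_{\min}/s$ suffices. Your route is more elementary and yields a concrete $\epsilon$, which is a genuine improvement in transparency; your closing remark that positive-definiteness is an open condition at $\mathbf{a}=0$ is essentially the soft version of the paper's continuity argument. Both arguments are sound.
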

    \begin{proof} Let $\mathcal{B}$ be an $(\cdot,\cdot)$-orthonormal basis of $\mathfrak{m}$ adapted to \eqref{isotypical:decomposition}, and constructed from the family $\{T_p^q:p,q\in\{1,...,s\}\}.$ For any family $\{a_{pq}:p,q\in\{1,...,s\},\ p\neq q\}$ of real numbers satisfying $a_{pq}=a_{qp},$ the linear map \eqref{metric:operator:epsilon} can be written in a  rearrangement of the basis $\mathcal{B},$ denoted as $\tilde{\mathcal{B}},$ as follows:
    \begin{equation}\label{positive:matrix}
        \left[A\right]_{\tilde{\mathcal{B}}}=\left(\begin{array}{ccccc}\mu_1\textnormal{I}_{d_1} & A_{21} & A_{31} & \cdots & A_{s1} \\
        A_{21} & \mu_2\textnormal{I}_{d_2} & A_{32} & \cdots & A_{s2}\\
        A_{31} & A_{32} & \mu_3\textnormal{I}_{d_3} & \cdots & A_{s3}\\
        \vdots & \vdots & \vdots & \ddots & \vdots\\
        A_{s1} & A_{s2} & A_{s3} & \cdots &\mu_{s}\textnormal{I}_{d_s}
        \end{array}\right),
    \end{equation}
    where $d_p=\dim\mathfrak{m}_p,\ p=1,...,s;$ and $$A_{pq}=\left\{\begin{array}{ll}0, & \text{if}\ \mathfrak{m}_p\ncong\mathfrak{m}_q\\ a_{pq}\textnormal{I}_{d_p}, & \text{if}\ \mathfrak{m}_p\cong\mathfrak{m}_q.\end{array}\right.$$ Since $A_{pq}$ is a scalar multiple of the the identity matrix when $\mathfrak{m}_p\cong\mathfrak{m}_q,$ it follows that $A_{pq}$ commutes with any matrix of its same order. Hence, according to Theorem \ref{main:1} $(2),$ $A$ will be the metric operator associated with a $G$-invariant metric whenever the matrix in \eqref{positive:matrix} is positive-definite. We will show that this condition holds when the values of $a_{pq}$ are sufficiently small. In fact, consider the polynomial $P:\mathbb{C}\longrightarrow\mathbb{C}$ defined as $$P(z)=(z-\mu_1)^{d_1}\cdots(z-\mu_{s})^{d_s}=z^d+\alpha_1z^{d-1}+\cdots+\alpha_{d-1}z+\alpha_d$$ where $d=d_1+\cdots+d_s,$ and let $$\eta=\min\left\{\frac{\left|\mu_p-\mu_q\right|}{2}:p\neq q\right\}\cup\{\mu_p:p=1,...,2\}.$$ According to \cite[Theorem B]{HM}, there exists $\delta>0$ such that if $|\beta_j-\alpha_j|<\delta,$ for $j=1,...,d;$ then the polynomial $$Q(z)=z^d+\beta_1z^{d-1}+\cdots+\beta_{d-1}z+\beta_d$$
    has exactly $d_p$ roots in $B(\mu_p,\eta).$ If $\lambda$ is a root of $Q(z),$ there exists $p\in\{1,...,s\}$ such that $|\lambda-\mu_p|<\eta\leq\mu_p.$ Thus
    \begin{align*}
        |\textnormal{Re}(\lambda)-\mu_p|\leq|\lambda-\mu_p|<\mu_p,
    \end{align*}
    which means that $\textnormal{Re}(\lambda)\in(0,2\mu_p).$ In particular, $\textnormal{Re}(\lambda)>0.$\\
    
    Now, consider the polynomial $$Q_A(z)=\det\left(z\textnormal{I}_d-\left[A\right]_{\tilde{\mathcal{B}}}\right)=z^d+f_1(\textbf{a})z^{d-1}+\cdots+f_{d-1}(\textbf{a})z+f_d(\textbf{a}),$$ where each $f_j(\textbf{a})$ is a polynomial function of $\textbf{a}:=(a_{pq})_{1\leq q<p\leq s}.$ Observe that $f_j(\textbf{0})=\alpha_j.$ The continuity of $f_j$ implies that there exists $\epsilon_j>0$ such that $$|a_{pq}|<\epsilon_j,\ p> q\Longrightarrow \left|f_j(\textbf{a})-\alpha_j\right|<\delta.$$ Let $\epsilon=\min\{\epsilon_1,...,\epsilon_d\}.$ Then we have
    \begin{align*}
        |a_{pq}|<\epsilon,\ p> q\Longrightarrow\left|f_j(\textbf{a})-\alpha_j\right|<\delta,\ j=1,...,d.
    \end{align*}
    This implies that the real part of all $Q_A(z)$ roots is positive. In other words, the real part of all eigenvalues of $\left[A\right]_{\tilde{\mathcal{B}}}$ is positive. Since $\left[A\right]_{\tilde{\mathcal{B}}}$ is symmetric, all its eigenvalues are real numbers. Therefore, all eigenvalues of $\left[A\right]_{\tilde{\mathcal{B}}}$ are positive whenever $|a_{pq}|<\epsilon$ for $p> q$ or, equivalently, $\left[A\right]_{\tilde{\mathcal{B}}}$ is positive-definite when $|a_{pq}|<\epsilon$ for $p> q.$ The proof is complete.
    \end{proof}

    \section{Equigeodesic Vectors}\label{section:3}
    \begin{definition}
        Let $M=G/H$ be a homogeneous space endowed with a $G$-invariant metric $g.$ A vector $X\in\mathfrak{g}$ is called a {\it geodesic} with respect to $g,$ if the orbit of its 1-parameter subgroup $\exp(tX)\cdot o$ is a geodesic in $(M,g).$ If $X\in\mathfrak{g}$ is a geodesic vector with respect to any $G$-invariant metric, it is then referred to as an {\it equigeodesic vector.}      
    \end{definition}
    Given $X\in\mathfrak{g},$ we denote $X_{\mathfrak{h}}$ and $X_{\mathfrak{m}}$ the orthogonal projections of $X$ on the subspaces $\mathfrak{h}$ and $\mathfrak{m}$ respectively, so that, $$X=X_{\mathfrak{h}}+X_{\mathfrak{m}},\ X_{\mathfrak{h}}\in\mathfrak{h},\ X_{\mathfrak{m}}\in\mathfrak{m}.$$ The following theorem provides a necessary and sufficient condition for a vector $X\in\mathfrak{g}$ to be geodesic with respect to a $G$-invariant metric $g.$ It was established by Kowalski and Vanhecke in \cite{KV}.
    \begin{theorem}\label{KV:condition}
        A vector $X\in\mathfrak{g}$ is geodesic with respect to a $G$-invariant metric $g$ if and only if \begin{equation}\label{KV:formula}
            \left\langle\left[X,Y\right]_{\mathfrak{m}},X_{\mathfrak{m}}\right\rangle_g=0,
        \end{equation}
        for all $Y\in\mathfrak{m}.$
    \end{theorem}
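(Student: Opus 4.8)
The plan is to turn the geodesic equation for the homogeneous curve $\gamma(t)=\exp(tX)\cdot o$ into an algebraic identity at the single point $o$, and then to evaluate that identity with the Koszul formula applied to fundamental vector fields.

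First I would note that the velocity field of $\gamma$ is the restriction to $\gamma$ of the fundamental (Killing) vector field $\widetilde X$ of the $G$-action, $\widetilde X_p:=\left.\tfrac{d}{dt}\right|_{t=0}\exp(tX)\cdot p$; indeed $\gamma$ is exactly the integral curve of $\widetilde X$ through $o$, so that $\nabla_{\dot\gamma}\dot\gamma=(\nabla_{\widetilde X}\widetilde X)\circ\gamma$. Since every $\phi_{\exp(sX)}$ is an isometry that carries $\gamma(t)$ to $\gamma(t+s)$ and $\dot\gamma(t)$ to $\dot\gamma(t+s)$, the field $\nabla_{\dot\gamma}\dot\gamma$ is at each time the image under $d\phi_{\exp(sX)}$ of its value at $t=0$; hence $\gamma$ is a geodesic if and only if $(\nabla_{\widetilde X}\widetilde X)_o=0$. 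Because $(d\varphi_o)_e\big|_{\mathfrak m}$ identifies $\mathfrak m$ with $T_oM$ and sends $Y\in\mathfrak m$ to $\widetilde Y_o$ (more generally $\widetilde Y_o$ corresponds to $Y_{\mathfrak m}$ for $Y\in\mathfrak g$), this vanishing is equivalent to $g_o\big((\nabla_{\widetilde X}\widetilde X)_o,\widetilde Y_o\big)=0$ for all $Y\in\mathfrak m$, which is the quantity I now compute.

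The computation of $g(\nabla_{\widetilde X}\widetilde X,\widetilde Y)$ at $o$ rests on two standard facts: the fundamental vector fields of a left action satisfy $[\widetilde X,\widetilde Y]=-\widetilde{[X,Y]}$, and $(\phi_a)_\ast\widetilde Y=\widetilde{\Ad(a)Y}$ for $a\in G$. Using the second together with the fact that the maps $\phi_a$ are isometries, the functions $g(\widetilde X,\widetilde Y)$ and $g(\widetilde X,\widetilde X)$ may be rewritten along the relevant one-parameter orbits as $t\mapsto g_o\big(\widetilde X_o,\widetilde{\Ad(\exp(-tX))Y}_o\big)$ and $s\mapsto g_o\big(\widetilde{\Ad(\exp(-sY))X}_o,\widetilde{\Ad(\exp(-sY))X}_o\big)$ respectively; differentiating at $0$, using $\Ad(\exp(-tX))X=X$, and reading everything back in $\mathfrak m$ turns the directional-derivative terms of the Koszul formula into multiples of $\langle[X,Y]_{\mathfrak m},X_{\mathfrak m}\rangle_g$. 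Collecting the three terms (the bracket term entering through $[\widetilde X,\widetilde Y]=-\widetilde{[X,Y]}$) gives $g_o\big((\nabla_{\widetilde X}\widetilde X)_o,\widetilde Y_o\big)=\pm\langle[X,Y]_{\mathfrak m},X_{\mathfrak m}\rangle_g$, and combined with the reduction above this yields \eqref{KV:formula}.

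The main obstacle is precisely this last bookkeeping: since the inner product $\langle\cdot,\cdot\rangle_g$ is only $\Ad(H)$-invariant rather than $\Ad(G)$-invariant, the derivative terms $\widetilde X\,g(\widetilde X,\widetilde Y)$ and $\widetilde Y\,g(\widetilde X,\widetilde X)$ in the Koszul formula do not drop out, and getting them right --- with the correct signs and with the anti-homomorphism convention $[\widetilde X,\widetilde Y]=-\widetilde{[X,Y]}$ --- is what pins down the identity. As an alternative to this hands-on route, one may invoke Nomizu's formula for the Levi--Civita connection of a reductive homogeneous space, which packages exactly these computations and gives $(\nabla_{\widetilde X}\widetilde X)_o=U(X_{\mathfrak m},X_{\mathfrak m})$ with $U$ the symmetric bilinear map determined by $2\langle U(A,B),C\rangle_g=\langle[C,A]_{\mathfrak m},B\rangle_g+\langle A,[C,B]_{\mathfrak m}\rangle_g$ for all $C\in\mathfrak m$; from this \eqref{KV:formula} is immediate.
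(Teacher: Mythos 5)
The paper does not actually prove this statement: it is imported verbatim from Kowalski--Vanhecke \cite{KV} with only a citation, so there is no internal proof to compare against. Your argument is a correct and self-contained derivation of the standard kind. The reduction to $(\nabla_{\widetilde X}\widetilde X)_o=0$ via the isometries $\phi_{\exp(sX)}$ is sound (they preserve both $\widetilde X$ and the Levi--Civita connection, so $(\nabla_{\widetilde X}\widetilde X)_{\gamma(s)}=d\phi_{\exp(sX)}\bigl((\nabla_{\widetilde X}\widetilde X)_o\bigr)$), and the Koszul bookkeeping closes: with $\widetilde X g(\widetilde X,\widetilde Y)\big|_o=-\langle X_{\mathfrak m},[X,Y]_{\mathfrak m}\rangle_g$, $\widetilde Y g(\widetilde X,\widetilde X)\big|_o=2\langle[X,Y]_{\mathfrak m},X_{\mathfrak m}\rangle_g$ and $g([\widetilde X,\widetilde Y],\widetilde X)_o=-\langle[X,Y]_{\mathfrak m},X_{\mathfrak m}\rangle_g$, one gets $g_o\bigl((\nabla_{\widetilde X}\widetilde X)_o,\widetilde Y_o\bigr)=-\langle[X,Y]_{\mathfrak m},X_{\mathfrak m}\rangle_g$, and crucially the derivative of $t\mapsto\Ad(\exp(-tX))$ works for arbitrary $X\in\mathfrak g$, so the $\mathfrak h$-component of $X$ is correctly carried into $[X,Y]_{\mathfrak m}$. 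The only inaccuracy is in your closing ``alternative'' remark: for general $X\in\mathfrak g$ (not just $X\in\mathfrak m$) it is not true that $(\nabla_{\widetilde X}\widetilde X)_o=U(X_{\mathfrak m},X_{\mathfrak m})$; the Killing field $\widetilde{X_{\mathfrak h}}$ vanishes at $o$ but has nonzero covariant derivative there, contributing the term $\langle[X_{\mathfrak h},Y],X_{\mathfrak m}\rangle_g$ that the formula \eqref{KV:formula} contains and $U(X_{\mathfrak m},X_{\mathfrak m})$ alone does not. Since your main route already handles general $X$, this only affects the shortcut, not the proof.
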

    \begin{corollary}\label{CGN:condition}
        A vector $X\in\mathfrak{g}$ is geodesic with respect to a $G$-invariant metric $g$ if and only if \begin{equation*}
            \left[X,AX_{\mathfrak{m}}\right]_{\mathfrak{m}}=0,
        \end{equation*}
        where $A$ is the metric operator associated with $g.$
    \end{corollary}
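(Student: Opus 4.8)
The plan is to translate the Kowalski--Vanhecke criterion of Theorem \ref{KV:condition} into a statement about the fixed $\Ad(G)$-invariant inner product $(\cdot,\cdot)$ and then remove the quantifier over $Y$ using non-degeneracy. By Theorem \ref{KV:condition}, $X$ is geodesic with respect to $g$ precisely when $\langle [X,Y]_{\mathfrak{m}}, X_{\mathfrak{m}}\rangle_g = 0$ for every $Y \in \mathfrak{m}$, so it suffices to show that this whole family of scalar conditions is equivalent to the single equation $[X, A X_{\mathfrak{m}}]_{\mathfrak{m}} = 0$.

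First I would use the defining relation \eqref{metric:operator:def} of the metric operator together with the self-adjointness of $A$ with respect to $(\cdot,\cdot)$ (Proposition \ref{properties:metric:operator}, item $ii)$) to rewrite, for each $Y \in \mathfrak{m}$,
\[
\langle [X,Y]_{\mathfrak{m}}, X_{\mathfrak{m}} \rangle_g = (A[X,Y]_{\mathfrak{m}}, X_{\mathfrak{m}}) = ([X,Y]_{\mathfrak{m}}, A X_{\mathfrak{m}}).
\]
Since $A X_{\mathfrak{m}} \in \mathfrak{m}$ and $\mathfrak{m}$ is $(\cdot,\cdot)$-orthogonal to $\mathfrak{h}$, the $\mathfrak{h}$-component of $[X,Y]$ pairs trivially against $A X_{\mathfrak{m}}$, so $([X,Y]_{\mathfrak{m}}, A X_{\mathfrak{m}}) = ([X,Y], A X_{\mathfrak{m}})$. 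Next I would apply the $\textnormal{ad}$-invariance of $(\cdot,\cdot)$ (the infinitesimal form of $\Ad(G)$-invariance), namely $([X,Y],Z) = -(Y,[X,Z])$, with $Z = A X_{\mathfrak{m}}$, and then use once more that $Y \in \mathfrak{m}$ is orthogonal to $\mathfrak{h}$ to discard the $\mathfrak{h}$-part of $[X, A X_{\mathfrak{m}}]$:
\[
([X,Y], A X_{\mathfrak{m}}) = -(Y, [X, A X_{\mathfrak{m}}]) = -(Y, [X, A X_{\mathfrak{m}}]_{\mathfrak{m}}).
\]

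Combining these identities, the Kowalski--Vanhecke condition becomes $(Y, [X, A X_{\mathfrak{m}}]_{\mathfrak{m}}) = 0$ for all $Y \in \mathfrak{m}$. Since $[X, A X_{\mathfrak{m}}]_{\mathfrak{m}}$ itself lies in $\mathfrak{m}$ and the restriction of $(\cdot,\cdot)$ to $\mathfrak{m}$ is a genuine positive-definite inner product, this vanishing for all $Y$ holds if and only if $[X, A X_{\mathfrak{m}}]_{\mathfrak{m}} = 0$, which is exactly the asserted equation. There is no serious obstacle in this argument; the only points that require care are bookkeeping the $\mathfrak{h}$- and $\mathfrak{m}$-components of the Lie brackets when passing between $[\cdot,\cdot]$ and $[\cdot,\cdot]_{\mathfrak{m}}$ — which is legitimate precisely because the decomposition $\mathfrak{g} = \mathfrak{h} \oplus \mathfrak{m}$ is $(\cdot,\cdot)$-orthogonal — and invoking self-adjointness of $A$ and $\textnormal{ad}$-invariance of $(\cdot,\cdot)$ at the right steps.
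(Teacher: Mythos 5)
Your proof is correct and follows essentially the same route as the paper: rewrite $\langle[X,Y]_{\mathfrak{m}},X_{\mathfrak{m}}\rangle_g$ via the metric operator, drop the $\mathfrak{h}$-components using the $(\cdot,\cdot)$-orthogonality of $\mathfrak{g}=\mathfrak{h}\oplus\mathfrak{m}$, apply $\mathrm{ad}$-invariance, and conclude by non-degeneracy of $(\cdot,\cdot)$ on $\mathfrak{m}$. The only cosmetic difference is that you invoke self-adjointness of $A$ where the paper uses the symmetry of $\langle\cdot,\cdot\rangle_g$; these are interchangeable.
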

    \begin{proof} Let $X\in\mathfrak{g}.$ For every $Y\in\mathfrak{m}$ we have
    \begin{align*}
        \left\langle\left[X,Y\right]_{\mathfrak{m}},X_{\mathfrak{m}}\right\rangle_g = &\left\langle X_{\mathfrak{m}},\left[X,Y\right]_{\mathfrak{m}}\right\rangle_g\\
        = & \left(AX_{\mathfrak{m}},\left[X,Y\right]_{\mathfrak{m}}\right)\\
        = & \left(AX_{\mathfrak{m}},\left[X,Y\right]\right),\hspace{1.2cm} \text{since}\ \mathfrak{m}\ \text{and}\ \mathfrak{h}\ \text{are}\ (\cdot,\cdot)\text{-orthogonal}\\
        = & -\left(\left[X,AX_{\mathfrak{m}}\right],Y\right),\hspace{0.7cm} \text{since}\ (\cdot,\cdot)\ \text{is}\ \Ad(G)\text{-invariant}\\
        = & -\left(\left[X,AX_{\mathfrak{m}}\right]_{\mathfrak{m}},Y\right),\hspace{0.45cm} \text{since}\ \mathfrak{m}\ \text{and}\ \mathfrak{h}\ \text{are}\ (\cdot,\cdot)\text{-orthogonal.}
    \end{align*}
    Thus, the condition \eqref{KV:formula} is satisfied by $X$ if and only if $$\left(\left[X,AX_{\mathfrak{m}}\right]_{\mathfrak{m}},Y\right)=0,$$ for all $Y\in\mathfrak{m}.$ Since $(\cdot,\cdot)\big{|}_{\mathfrak{m}\times\mathfrak{m}}$ is an inner product, this holds if and only if $$\left[X,AX_{\mathfrak{m}}\right]_{\mathfrak{m}}=0.$$
    \end{proof}
    We will apply the characterization introduced in the previous section of $G$-invariant metrics on a homogeneous space to establish the following theorem, which provides a strong necessary condition for a vector $X\in\mathfrak{m}$ to be equigeodesic. In the subsequent section, we will see its utility in characterizing equigeodesic vectors on a homogeneous space with equivalent isotropy summands.
    \begin{theorem}\label{main:2}
        If $X\in\mathfrak{m}$ is an equigeodesic vector then \begin{equation}\label{main:formula}
            \left[X,T_i^j(P_i(X))+T_j^i(P_j(X))\right]_{\mathfrak{m}}=0,\ i,j=1,...,s.
        \end{equation}
    \end{theorem}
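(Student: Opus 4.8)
The plan is to use Corollary \ref{CGN:condition} together with the freedom in choosing the metric operator guaranteed by Corollary \ref{instrumental:corollary}. Since $X\in\mathfrak{m}$ is equigeodesic, $X$ is geodesic with respect to every $G$-invariant metric, so by Corollary \ref{CGN:condition} we have $[X,AX]_{\mathfrak{m}}=0$ for every metric operator $A$ (note $X_{\mathfrak m}=X$ here). First I would fix positive reals $\mu_1,\dots,\mu_s>0$ and let $\epsilon>0$ be as in Corollary \ref{instrumental:corollary}. Writing $X=\sum_{p=1}^s P_p(X)$, the standard \emph{diagonal} metric operator $A_0$ with $A_0|_{\mathfrak{m}_p}=\mu_p\,\mathrm{I}$ is a metric operator, so $[X,A_0X]_{\mathfrak{m}}=\big[X,\sum_p \mu_p P_p(X)\big]_{\mathfrak{m}}=0$. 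Next, for a fixed pair $i\neq j$ with $\mathfrak{m}_i\cong\mathfrak{m}_j$ and a small parameter $a$ with $|a|<\epsilon$, I would take the operator $A_a$ given by \eqref{metric:operator:epsilon} with $a_{ij}=a_{ji}=a$ and all other $a_{pq}=0$; by Corollary \ref{instrumental:corollary} this is again a metric operator. Then $A_aX = A_0X + a\,T_i^j(P_i(X)) + a\,T_j^i(P_j(X))$, and the geodesic condition gives $[X,A_aX]_{\mathfrak{m}}=0$.

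Subtracting the two identities $[X,A_aX]_{\mathfrak m}=0$ and $[X,A_0X]_{\mathfrak m}=0$ and using bilinearity of the bracket and linearity of the projection onto $\mathfrak{m}$, I obtain $a\big[X,\,T_i^j(P_i(X))+T_j^i(P_j(X))\big]_{\mathfrak{m}}=0$; choosing any $a\neq 0$ with $|a|<\epsilon$ yields \eqref{main:formula} for this pair $(i,j)$. The remaining case is when $\mathfrak{m}_i\not\cong\mathfrak{m}_j$: then by convention (the remark following Theorem \ref{main:1}) $T_i^j=0$ and $T_j^i=0$, so the left-hand side of \eqref{main:formula} is $[X,0]_{\mathfrak m}=0$ and the identity holds trivially. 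Similarly the diagonal case $i=j$ reduces to $[X,2T_i^i(P_i(X))]_{\mathfrak m}=[X,2P_i(X)]_{\mathfrak m}$; this is handled by a separate small perturbation of a single diagonal entry $\mu_i$, or more directly by noting that $\sum_i[X,2P_i(X)]_{\mathfrak m}=2[X,X]_{\mathfrak m}=0$ combined with varying the $\mu_i$ independently — one compares $[X,A_0X]_{\mathfrak m}=0$ for two diagonal metric operators differing only in the $i$-th entry, so that $(\mu_i-\mu_i')[X,P_i(X)]_{\mathfrak m}=0$, giving $[X,P_i(X)]_{\mathfrak m}=0$ and hence \eqref{main:formula} for $i=j$.

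I do not anticipate a genuine obstacle: the argument is a clean linear-algebra extraction from a family of equalities, and every ingredient (the Kowalski--Vanhecke characterization in the form of Corollary \ref{CGN:condition}, the abundance of metric operators of the form \eqref{metric:operator:epsilon} from Corollary \ref{instrumental:corollary}, and the conventions on $T_p^q$) has already been established. The one point requiring a little care is bookkeeping: one must make sure that when $\mathfrak m_i\cong\mathfrak m_j$ the perturbed operator $A_a$ really does satisfy the hypotheses of Corollary \ref{instrumental:corollary} (it does, since only one off-diagonal pair is switched on and $|a|<\epsilon$), and that the symmetry requirement $a_{ij}=a_{ji}$ is respected so that $A_a$ is self-adjoint. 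With that in place, the subtraction trick isolates exactly the bracket appearing in \eqref{main:formula}, and letting $i,j$ range over all pairs completes the proof.
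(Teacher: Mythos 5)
Your proposal is correct and follows essentially the same route as the paper: both arguments combine Corollary \ref{CGN:condition} with the family of metric operators \eqref{metric:operator:epsilon} from Corollary \ref{instrumental:corollary}, isolating the off-diagonal bracket by switching on a single pair $a_{ij}=a_{ji}\neq 0$ and the diagonal bracket by varying a single $\mu_i$. The only cosmetic difference is that you subtract two specific instances of $[X,AX]_{\mathfrak{m}}=0$, whereas the paper first expands $[X,AX]_{\mathfrak{m}}$ in full generality and then specializes the parameters.
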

    \begin{proof} Let $X\in\mathfrak{m}$ be an equigeodesic vector. Then $$\left[X,AX\right]_{\mathfrak{m}}=0,$$ for every linear map $A:\mathfrak{m}\rightarrow\mathfrak{m}$ which is the metric operator associated with some $G$-invariant metric. For $\mu_1,...,\mu_s>0$ and a family $\{a_{pq}:p,q\in\{1,...,s\},\ p\neq q\}$ of real numbers such that $a_{pq}=a_{qp},$ consider (as in Corollary \ref{instrumental:corollary}) the linear map $A:\mathfrak{m}\rightarrow\mathfrak{m}$ defined as \begin{equation}\label{auxiliar:0}A(Y)=\mu_pY+\sum\limits_{q\neq p}a_{pq}T_p^q(Y),\ Y\in\mathfrak{m}_p,\ p=1,...,s.\end{equation}
    Computing $\left[X,AX\right]_{\mathfrak{m}}$ we obtain: 
    \begin{align*}
        \left[X,AX\right]_{\mathfrak{m}}=&\left[X,A\left(\sum\limits_{p=1}^{s}P_p(X)\right)\right]_{\mathfrak{m}}\\
        =&\left[X,\sum\limits_{p=1}^{s}A\left(P_p(X)\right)\right]_{\mathfrak{m}}\\ 
        =& \left[X,\sum\limits_{p=1}^{s}\left\{\mu_pP_p(X)+\sum\limits_{q\neq p}a_{pq}T_p^q(P_p(X))\right\}\right]_{\mathfrak{m}}\\
        =& \sum\limits_{p=1}^s\mu_p\left[X,P_p(X)\right]_{\mathfrak{m}}+\sum\limits_{q\neq p}a_{pq}\left[X,T_p^q(P_p(X))\right]_{\mathfrak{m}}\\
        =& \sum\limits_{p=1}^s\mu_p\left[\sum\limits_{q=1}^sP_q(X),P_p(X)\right]_{\mathfrak{m}}+\sum\limits_{q\neq p}a_{pq}\left[X,T_p^q(P_p(X))\right]_{\mathfrak{m}}\\
        =& \sum\limits_{p=1}^s\sum\limits_{q=1}^s\mu_p\left[P_q(X),P_p(X)\right]_{\mathfrak{m}}+\sum\limits_{q\neq p}a_{pq}\left[X,T_p^q(P_p(X))\right]_{\mathfrak{m}}\\
        =& \sum\limits_{p\neq q}\mu_p\left[P_q(X),P_p(X)\right]_{\mathfrak{m}}+\sum\limits_{q\neq p}a_{pq}\left[X,T_p^q(P_p(X))\right]_{\mathfrak{m}}\\
        =& \sum\limits_{p>q}\mu_p\left[P_q(X),P_p(X)\right]_{\mathfrak{m}}+\sum\limits_{q>p}\mu_p\left[P_q(X),P_p(X)\right]_{\mathfrak{m}}\\
         &+\sum\limits_{p>q}a_{pq}\left[X,T_p^q(P_p(X))\right]_{\mathfrak{m}}+\sum\limits_{q>p}a_{pq}\left[X,T_p^q(P_p(X))\right]_{\mathfrak{m}}\\
         =& \sum\limits_{p>q}\mu_p\left[P_q(X),P_p(X)\right]_{\mathfrak{m}}+\sum\limits_{p>q}\mu_q\left[P_p(X),P_q(X)\right]_{\mathfrak{m}}\\
         &+\sum\limits_{p>q}a_{pq}\left[X,T_p^q(P_p(X))\right]_{\mathfrak{m}}+\sum\limits_{p>q}a_{qp}\left[X,T_q^p(P_q(X))\right]_{\mathfrak{m}}\\
         =& -\sum\limits_{p>q}\mu_p\left[P_p(X),P_q(X)\right]_{\mathfrak{m}}+\sum\limits_{p>q}\mu_q\left[P_p(X),P_q(X)\right]_{\mathfrak{m}}\\
         &+\sum\limits_{p>q}a_{pq}\left[X,T_p^q(P_p(X))\right]_{\mathfrak{m}}+\sum\limits_{p>q}a_{pq}\left[X,T_q^p(P_q(X))\right]_{\mathfrak{m}}\\
         =&\sum\limits_{p>q}\left(\mu_q-\mu_p\right)\left[P_p(X),P_q(X)\right]_{\mathfrak{m}}+\sum\limits_{p>q}a_{pq}\left[X,T_p^q(P_p(X))+T_q^p(P_q(X))\right]_{\mathfrak{m}}.
    \end{align*} Fix $i\in\{1,...,s\}$ and let $$\mu_p=\left\{\begin{array}{ll}2,&\textnormal{if}\ p=i,\\ 1, &\textnormal{if}\ p\neq i,\end{array}\right. a_{pq}=0,\ p\neq q.$$ Then, according to Corollary \ref{instrumental:corollary}, the formula \eqref{auxiliar:0} defines a metric operator associated with certain $G$-invariant metric. Hence,
    \begin{align*}
        0=& \sum\limits_{p>q}\left(\mu_q-\mu_p\right)\left[P_p(X),P_q(X)\right]_{\mathfrak{m}}+\sum\limits_{p>q}a_{pq}\left[X,T_p^q(P_p(X))+T_q^p(P_q(X))\right]_{\mathfrak{m}}\\
        =& \sum\limits_{p>i}\left(2-1\right)\left[P_p(X),P_i(X)\right]_{\mathfrak{m}}+\sum\limits_{i>p}\left(1-2\right)\left[P_i(X),P_p(X)\right]_{\mathfrak{m}}\\
        =& \sum\limits_{p>i}\left[P_p(X),P_i(X)\right]_{\mathfrak{m}}+\sum\limits_{i>p}\left[P_p(X),P_i(X)\right]_{\mathfrak{m}}\\
        =& \sum\limits_{p\neq i}\left[P_p(X),P_i(X)\right]_{\mathfrak{m}}\\
        =& \sum\limits_{p\neq i}\left[P_p(X),P_i(X)\right]_{\mathfrak{m}}+\left[P_i(X),P_i(X)\right]_{\mathfrak{m}}\\
        =&\left[\sum\limits_{p=1}^sP_p(X),P_i(X)\right]_{\mathfrak{m}}\\
        =&\left[X,P_i(X)\right]_{\mathfrak{m}}\\
        =&\left[X,T_i^i(P_i(X))\right]_{\mathfrak{m}}.
    \end{align*}
    This proves the result for $i=j.$ Now, consider $\mu_p=1,\ p=1,...,s.$ By Corollary \ref{instrumental:corollary}, there exists $\epsilon>0$ such that for every family $\{a_{pq}:p,q\in\{1,...,s\},\ p\neq q\}\subseteq (-\epsilon,\epsilon)$ satisfying $a_{pq}=a_{qp},$ the linear map \eqref{auxiliar:0} is the metric operator associated with some $G$-invariant metric. Given $i\neq j,$ without loss of generality suppose that $i>j$ and set
    $$a_{pq}=a_{qp}=\left\{\begin{array}{ll}
       \displaystyle\frac{\epsilon}{2},  & \text{if}\ \{p,q\}=\{i,j\}, \\
       \\
        0, &\text{otherwise.} 
    \end{array}\right.$$
    Since $[X,AX]_{\mathfrak{m}}=0,$ then 
    \begin{align*}
        0=& \sum\limits_{p>q}\left(\mu_q-\mu_p\right)\left[P_p(X),P_q(X)\right]_{\mathfrak{m}}+\sum\limits_{p>q}a_{pq}\left[X,T_p^q(P_p(X))+T_q^p(P_q(X))\right]_{\mathfrak{m}}\\
        =& \sum\limits_{p>q}\left(1-1\right)\left[P_p(X),P_q(X)\right]_{\mathfrak{m}}+\frac{\epsilon}{2}\left[X,T_i^j(P_i(X))+T_j^i(P_j(X))\right]_{\mathfrak{m}}\\
        =&\frac{\epsilon}{2}\left[X,T_i^j(P_i(X))+T_j^i(P_j(X))\right]_{\mathfrak{m}}.
    \end{align*}
    Hence $$\left[X,T_i^j(P_i(X))+T_j^i(P_j(X))\right]_{\mathfrak{m}}=0.$$
    \end{proof}
    It is essential to emphasize that the formula \eqref{main:formula} eliminates the dependence on the metric operator $A.$ Instead, it solely relies on the projections associated with the isotropy summands and the corresponding equivariant maps between them. However, formula \eqref{main:formula} is not generally a sufficient condition for a vector $X\in\mathfrak{m}$ to be equigeodesic. The next example illustrates this.
    \begin{example}\label{not:sufficient} Consider the Stiefel manifold $V_2\left(\mathbb{R}^4\right)=\textnormal{SO}(4)/\textnormal{SO}(2),$ where $\textnormal{SO}(2)$ is identified with the subgroup $$\left\{\left(\begin{array}{cccc}
        u & -v  & 0 & 0\\
        v & u & 0 & 0\\
        0 & 0 & 1 & 0\\
        0 & 0 & 0 & 1
    \end{array}\right):u^2+v^2=1\right\}\subseteq \textnormal{SO}(4).$$ The Lie algebra $\mathfrak{so}(4)$ of $\textnormal{SO}(4)$ consists of all skew-symmetric square matrices of order 4. Set $$(\cdot,\cdot):=\dfrac{1}{4}B\ \text{and}\ E_{ij}:=\left(\delta_{ik}\delta_{jl}\right)_{(k,l)},\ 1\leq i\neq j\leq 4,$$ where $B$ denotes the Killing form of $\mathfrak{so}(4)$ (which is $\Ad($SO(4))-invariant) and $\delta_{mn}$ is the Kronecker delta. This means that $E_{ij}$ is the $4\times 4$ matrix  with a 1 in the $(i,j)$-entry and zeros everywhere else. Define \begin{align*}
        X_1:=E_{21}-E_{12},\ X_2:=E_{43}-E_{34},\ X_3:=E_{31}-E_{13}\\
        X_4:=E_{42}-E_{24},\ X_5:=E_{32}-E_{23},\ X_6:=E_{41}-E_{14}.
    \end{align*}
        An $(\cdot,\cdot)$-orthogonal reductive decomposition is given by $$\mathfrak{so}(4)=\mathfrak{so}(2)\oplus\mathfrak{m}=\mathfrak{so}(2)\oplus\mathfrak{m}_1\oplus\mathfrak{m}_2\oplus\mathfrak{m}_3,$$ where \begin{align*}&\mathfrak{so}(2)=\textnormal{span}\{X_1\},\\
        &\mathfrak{m}_1=\textnormal{span}\{X_2\},\\ &\mathfrak{m}_2=\textnormal{span}\{X_3,X_5\},\\
        &\mathfrak{m}_3=\textnormal{span}\{X_4,X_6\}.
        \end{align*} The isotropy summands $\mathfrak{m}_2$ and $\mathfrak{m}_3$ are equivalent and the map $$\begin{array}{rrcl}T_2^3:&\mathfrak{m}_2&\longrightarrow&\mathfrak{m}_3\\ & X_3 &\longmapsto & X_4\\ & X_5 &\longmapsto & -X_6\end{array}$$ is an intertwining operator preserving $(\cdot,\cdot).$ For a vector $X=\sum\limits_{k=2}^6x_kX_k\in\mathfrak{m}$ we have \begin{align*}
            & \left[X,P_1(X)\right]_{\mathfrak{m}}=  \left[X,x_2X_2\right]_{\mathfrak{m}}=x_2x_6X_3-x_2x_5X_4+x_2x_4X_5-x_2x_3X_6\\
            \\
            &\left[X,P_2(X)\right]_{\mathfrak{m}}= \left[X,x_3X_3+x_5X_5\right]_{\mathfrak{m}} = -\left(x_3x_6+x_4x_5\right)X_2\\
            \\
            &\left[X,P_3(X)\right]_{\mathfrak{m}}=  \left[X,x_4X_4+x_6X_6\right]_{\mathfrak{m}}=\left(x_3x_6+x_4x_5\right)X_2\\
            \\
            &\left[X,T_2^3(P_2(X))+T_3^2(P_3(X))\right]_{\mathfrak{m}} = \left[X,x_4X_3+x_3X_4-x_6X_5-x_5X_6\right]_{\mathfrak{m}}\\
            \\
            &\hspace{5.525cm}=x_2x_5X_3-x_2x_6X_4-x_2x_3X_5+x_2x_4X_6
        \end{align*} Thus, $X$ satisfies the equations \eqref{main:formula} if and only if $$x_2x_k=x_3x_6+x_4x_5=0,\ k=3,4,5,6.$$ On the other hand, it is easy to see that for each $$h=\left(\begin{array}{cccc}
        u & -v  & 0 & 0\\
        v & u & 0 & 0\\
        0 & 0 & 1 & 0\\
        0 & 0 & 0 & 1
    \end{array}\right)\in\textnormal{SO}(2),$$ $\Ad(h)$ is expressed in the basis $\mathcal{B}=\{X_2,X_3,X_5,X_4,-X_6\}$ as 
    $$\left[\Ad(h)\right]_{\mathcal{B}}=\left(\begin{array}{ccccc}
        1 & 0 & 0 & 0 & 0\\
        0 & u & -v & 0 & 0\\
        0 & v & u & 0 & 0\\
        0 & 0 & 0 & u & -v\\
        0 & 0 & 0 & v & u\\
    \end{array}\right).$$ According to Theorem \ref{main:1}, the set of all metric operators associated with $\textnormal{SO}(4)$-invariant metrics on $V_2\left(\mathbb{R}^4\right)$ is the set of all linear maps $A$ that are written in the basis $\mathcal{B}$ as $$\left[A\right]_{\mathcal{B}}=\left(\begin{array}{ccccc}
        \mu_1 & 0 & 0 & 0 & 0\\
        0 & \mu_2 & 0 & a & b\\
        0 & 0 & \mu_2 & -b & a\\
        0 & a & -b & \mu_3 & 0\\
        0 & b & a & 0 & \mu_3\\
    \end{array}\right),\ \mu_1,\mu_2,\mu_3>0\ \text{and}\  a^2+b^2<\mu_2\mu_3.$$ Therefore, for a vector $X=\sum\limits_{k=2}^6x_kX_k$ the expression $[X,AX]_{\mathfrak{m}}$ is:
    \begin{align*}
        \left[X,AX\right]_{\mathfrak{m}}= &\left[\sum\limits_{k=2}^6x_kX_k,\sum\limits_{k=2}^6x_kAX_k\right]_{\mathfrak{m}}\\
        \\
        =&\left\{(\mu_3-\mu_2)(x_3x_6+x_4x_5)-b(x_3^2+x_4^2+x_5^2+x_6^2)\right\}X_2\\
        &+\left\{(\mu_1-\mu_3)x_2x_6+ax_2x_5+bx_2x_3\right\}X_3\\
        &+\left\{(\mu_2-\mu_1)x_2x_5-ax_2x_6+bx_2x_4\right\}X_4\\
        &+\left\{(\mu_1-\mu_3)x_2x_4-ax_2x_3+bx_2x_5\right\}X_5\\
        &+\left\{(\mu_2-\mu_1)x_2x_3+ax_2x_4+bx_2x_6\right\}X_6.
    \end{align*}
    This implies that $\left[X,AX\right]_{\mathfrak{m}}=0$ for every metric operator $A$, if and only if $$x_2x_k=x_3x_6=x_3^2+x_4^2+x_5^2+x_6^2=0,\ k=3,4,5,6;$$ or, simply, $x_3=x_4=x_5=x_6=0.$ In this case, any nonzero vector $X=x_3X_4+x_4X_4+x_5X_5+x_6X_6,$ with $x_3x_6+x_4x_5=0$ is not an equigeodesic vector, but it satisfies the conditions \eqref{main:formula}.
    \end{example}
    \begin{remark} If an  isotropy summand $\mathfrak{m}_j$ has multiplicity equals to one, then every vector $X\in\mathfrak{m}_j$ is equigeodesic. This is because $AX$ is a scalar multiple of $X$ for any metric operator $A,$ and thus, $[X,AX]=0.$ However, this does not hold in general for an isotropy summand of multiplicity greater than one. As shown in the example above, every $X\in\mathfrak{m}_j,\ j=2,3$ is not equigeodesic.
    \end{remark}
    \begin{remark} In the example \ref{not:sufficient}, the formula \eqref{main:formula} fails to be a sufficient condition for a vector $X\in\mathfrak{m}$ to be equigeodesic because the representation $$\Ad^{\textnormal{SO}(2)}\big{|}_{\mathfrak{m}}:\textnormal{SO}(2)\rightarrow\textnormal {GL}(\mathfrak{m}_2)$$ is not orthogonal, this means that the space of all equivariant endomorphism of this representation is not isomorphic to $\mathbb{R}.$ This does not occur in the case of the Stiefel manifold $V_2\left(\mathbb{R}^n\right)$ when $n\neq 4.$ A complete characterization of equigeodesic vectors for $V_2\left(\mathbb{R}^n\right)$ was done by Statha in \cite{S}.
    \end{remark}
    The following theorem establishes a scenario in which formula \eqref{main:formula} serves as a characterization of equigeodesic vectors. Before presenting it, let us recall that, due to Theorem \ref{irreducible:real:representations}, we have that $$\textnormal{End}(\mathfrak{m}_j)\cong \mathbb{R},\ \mathbb{C},\ \textnormal{or}\ \mathbb{H},\ j=1,...,s;$$ where $\textnormal{End}(\mathfrak{m}_j)$ is the set of all equivariant endomorphisms of the representation $\Ad^H\big{|}_{\mathfrak{m}_j}.$    \begin{theorem}\label{main:3}
        Let $M$ be a homogeneous $G$-space and assume that $\textnormal{End}(\mathfrak{m}_j)\cong \mathbb{R}$ for each isotropy summand $\mathfrak{m}_j$ with multiplicity greater than one. Then $X\in\mathfrak{m}$ is equigeodesic if and only if \begin{equation}\label{main:formula:2}
            \left[X,T_i^j(P_i(X))+T_j^i(P_j(X))\right]_{\mathfrak{m}}=0,\ i,j=1,...,s.
        \end{equation}
    \end{theorem}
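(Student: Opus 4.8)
The implication ``equigeodesic $\Rightarrow$ \eqref{main:formula:2}'' is exactly Theorem \ref{main:2}, so the plan is to prove the converse. Assume $X\in\mathfrak m$ satisfies \eqref{main:formula:2}. Since $X\in\mathfrak m$ we have $X_{\mathfrak m}=X$, and by Corollary \ref{CGN:condition} it suffices to show that $\left[X,AX\right]_{\mathfrak m}=0$ for \emph{every} metric operator $A$ on $\mathfrak m$; this will mean $X$ is a geodesic vector with respect to every $G$-invariant metric, i.e.\ equigeodesic.

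The key structural step — and the only place the hypothesis $\textnormal{End}(\mathfrak m_j)\cong\mathbb R$ is used — is to identify the full set of metric operators with the family appearing in Corollary \ref{instrumental:corollary}. Fix a $(\cdot,\cdot)$-orthonormal basis of $\mathfrak m$ adapted to \eqref{isotypical:decomposition}, built from a compatible family $\{T_p^q\}$. Apply Theorem \ref{main:1}$(1)$: in an isotypical summand $M_i=\mathfrak m_{j_1}\oplus\cdots\oplus\mathfrak m_{j_t}$ with $t>1$, each $\mathfrak m_{j_l}$ is equivalent to the others and hence has multiplicity $>1$, so $\textnormal{End}(\mathfrak m_{j_l})\cong\mathbb R$; by Theorem \ref{irreducible:real:representations} and Corollary \ref{equivariant:maps}, the quantities $\zeta_{j_l}^{j_k}$ in \eqref{matrix:metric:operator:isotypical:summand} are then genuine real numbers, so every block $A^i_{lk}$ is a \emph{real} scalar multiple of the identity, while for $t=1$ the restriction $A|_{M_i}$ is multiplication by a positive scalar. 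Reading these blocks back as maps via $\mathcal B_{j_k}=T_{j_l}^{j_k}(\mathcal B_{j_l})$, one obtains that every metric operator $A$ is of the form
\begin{equation*}
 A(Y)=\mu_pY+\sum_{q\neq p}a_{pq}\,T_p^q(Y),\qquad Y\in\mathfrak m_p,\ p=1,\dots,s,
\end{equation*}
with $\mu_p>0$, $a_{pq}=a_{qp}\in\mathbb R$, and $a_{pq}=0$ whenever $\mathfrak m_p\ncong\mathfrak m_q$; that is, precisely the shape \eqref{metric:operator:epsilon}.

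With this in hand, substitute such an $A$ into the expansion of $[X,AX]_{\mathfrak m}$ carried out in the proof of Theorem \ref{main:2}, which at an intermediate stage reads
\begin{equation*}
 \left[X,AX\right]_{\mathfrak m}=\sum_{p=1}^{s}\mu_p\left[X,P_p(X)\right]_{\mathfrak m}+\sum_{p>q}a_{pq}\left[X,\,T_p^q(P_p(X))+T_q^p(P_q(X))\right]_{\mathfrak m}.
\end{equation*}
The case $i=j=p$ of \eqref{main:formula:2} (with $T_p^p=\textnormal{I}_{\mathfrak m_p}$) gives $\left[X,P_p(X)\right]_{\mathfrak m}=0$, so the first sum vanishes; the case $i=p$, $j=q$ of \eqref{main:formula:2} makes every bracket in the second sum vanish. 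Hence $[X,AX]_{\mathfrak m}=0$ for an arbitrary metric operator $A$, and Corollary \ref{CGN:condition} finishes the proof. I expect the only non-routine point to be the structural paragraph: one must verify that $\textnormal{End}(\mathfrak m_j)\cong\mathbb R$ is exactly the condition that forbids complex- or quaternion-linear off-diagonal blocks, so that the space of all metric operators collapses onto the simple family of Corollary \ref{instrumental:corollary}; everything downstream is the bookkeeping already done for Theorem \ref{main:2}.
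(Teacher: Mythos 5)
Your proposal is correct and follows essentially the same route as the paper: the forward direction is delegated to Theorem \ref{main:2}, and the converse is obtained by using the hypothesis $\textnormal{End}(\mathfrak{m}_j)\cong\mathbb{R}$ to show that every metric operator collapses to the form \eqref{metric:operator:epsilon} of Corollary \ref{instrumental:corollary}, after which the expansion of $[X,AX]_{\mathfrak{m}}$ from the proof of Theorem \ref{main:2} together with \eqref{main:formula:2} gives $[X,AX]_{\mathfrak{m}}=0$ and Corollary \ref{CGN:condition} concludes. The only cosmetic difference is that you phrase the structural step via the block matrices of Theorem \ref{main:1}$(1)$ and Corollary \ref{equivariant:maps}, while the paper argues directly that $T_q^p\circ P_q\circ A\circ i_p\in\textnormal{End}(\mathfrak{m}_p)\cong\mathbb{R}$; these are the same argument.
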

    \begin{proof} We only need to show that if $X\in\mathfrak{m}$ satisfies the conditions: $$\left[X,T_i^j(P_i(X))+T_j^i(P_j(X))\right]_{\mathfrak{m}}=0,\ i,j=1,...,s;$$ then it is indeed an equigeodesic vector. To do this, we will show that any metric operator $A$, associated with a $G$-invariant metric, takes the form of \eqref{auxiliar:0}. According to Theorem \ref{main:1} $(1)$, any metric operator $A$ has the form $$A(Y)=\mu_pY+\sum\limits_{q\neq p}\left[P_q\circ A\circ i_p\right](Y),\ Y\in\mathfrak{m}_p,\ p=1,...,s;$$ where $\mu_1,...,\mu_s>0.$ If $p\neq q,$ by Corollary \ref{j:k:0}, $P_q\circ A\circ i_p=0$ whenever  $\mathfrak{m}_p\ncong\mathfrak{m}_q.$ In such cases, we can set $a_{pq}=a_{qp}=0.$ However, if $\mathfrak{m}_p\cong\mathfrak{m}_q,$ then $P_q\circ A\circ i_p:\mathfrak{m}_p\rightarrow \mathfrak{m}_q$ is an equivariant map between $\Ad^H\big{|}_{\mathfrak{m}_p}$ and $\Ad^H\big{|}_{\mathfrak{m}_q}$ (see Proposition \ref{A:Equivariant}). Therefore, $T_q^p\circ P_q\circ A\circ i_p\in\textnormal{End}(\mathfrak{m}_p)\cong\mathbb{R}.$ This means that there exists $a_{pq}\in\mathbb{R}$ such that $$T_q^p\circ P_q\circ A\circ i_p=a_{pq}\textnormal{I}_{\mathfrak{m}_p},$$ or, equivalently, $$P_q\circ A\circ i_p=a_{pq}T_p^q$$ (since $(T_q^p)^{-1}=T_p^q$). By applying Theorem \ref{main:1} $(1),$ we can conclude that $a_{pq}=a_{qp}.$ Hence, any metric operator $A$ can be written as $$A(Y)=\mu_pY+\sum\limits_{q\neq p}a_{pq}T_p^q(Y),\ Y\in\mathfrak{m}_p,\ p=1,...,s;$$ where $\mu_1,...,\mu_s>0$ and $\{a_{pq}:p,q\in\{1,...,s\},\ p\neq q\}\subseteq\mathbb{R}$ with $a_{pq}=a_{qp}.$ As previously shown in the proof of Theorem \ref{main:2}, for all $X\in\mathfrak{m}$ we have that $$\left[X,AX\right]_{\mathfrak{m}}=\sum\limits_{p=1}^s\mu_p\left[X,P_p(X)\right]_{\mathfrak{m}}+\sum\limits_{p>q}a_{pq}\left[X,T_p^q(P_p(X))+T_q^p(P_q(X))\right]_{\mathfrak{m}}.$$ Assuming that $X\in\mathfrak{m}$ satisfies  $$\left[X,T_i^j(P_i(X))+T_j^i(P_j(X))\right]_{\mathfrak{m}}=0,\ i,j=1,...,s;$$ we obtain $$[X,P_i(X)]_{\mathfrak{m}}=[X,T_i^{i}P_i(X)]_{\mathfrak{m}}=\frac{1}{2}[X,T_i^{i}P_i(X)+T_i^{i}P_i(X)]_{\mathfrak{m}}=0,\ i=1,...,s.$$ This implies that $\left[X,AX\right]_{\mathfrak{m}}=0$ for any metric operator $A.$ Hence, by Corollary \ref{CGN:condition}, we can conclude that $X$ is equigeodesic.
    \end{proof}
    
    \section{Equigeodesics on Riemannian $M$-spaces}\label{applications}
    In this section, we will apply the previous results to investigate equigeodesic vectors on Riemannian $M$-spaces, which are a class of homogeneous spaces introduced by Wang in \cite{W}. For this purpose, we will follow the exposition of the isotropy representation of $M$-spaces as laid out by Arvanitoyeorgos, Wang, and Zhao in \cite{AWZ}.\\  %In this section, we will show how we can use Theorems \ref{main:2} and \ref{main:3} to find equigeodesic vectors on homogeneous spaces with equivalent isotropy summands.
    %\subsection{Real flag manifolds of $G_2$}
    %\subsection{$M$-spaces} 
    
    Consider a generalized flag manifold $G/K$, where $G$ is a compact simple Lie group and $K$ is the centralizer of a torus $S$ in $G.$ Then $K$ is expressed as $K=S \times K_1,$ where $K_1$ is the semisimple part of $K.$ The associated $M$-space is the homogeneous space $G/K_1.$ Let us denote the Lie algebras of $G$ and $K$ by $\mathfrak{g}$ and $\mathfrak{k}$ and their complexifications by $\mathfrak{g}^{\mathbb{C}}$ and $\mathfrak{k}^{\mathbb{C}},$ respectively. Denote by $B$ the Killing form of $\mathfrak{g}$ and set $(\cdot,\cdot):=-B.$ Then $(\cdot,\cdot)$ is an $\Ad(G)$-invariant inner product on $\mathfrak{g}.$ We have a reductive decomposition $\mathfrak{g}=\mathfrak{k}\oplus\mathfrak{m},$ where $\mathfrak{m}$ is the orthogonal complement of $\mathfrak{k}$ in $\mathfrak{g}$ with respect to $(\cdot,\cdot).$ Let $T$ be a maximal torus of $G$ that contains $S,$ $\mathfrak{a}$ its Lie algebra and $\mathfrak{a}^\mathbb{C}$ the complexification of $\mathfrak{a}.$ Then $\mathfrak{a}^\mathbb{C}$ is a Cartan subalgebra of $\mathfrak{g}^\mathbb{C}$ and we have a decomposition of the form
    \begin{equation*}
        \mathfrak{g}^\mathbb{C}=\mathfrak{a}^\mathbb{C}\oplus\left(\sum\limits_{\alpha\in R}\mathfrak{g}_{\alpha}^\mathbb{C}\right),
    \end{equation*}
    where $R$ is a root system of $\mathfrak{g}^\mathbb{C}$ with respect to $\mathfrak{a}^\mathbb{C},$ and each $$\mathfrak{g}_\alpha^\mathbb{C}=\left\{X\in\mathfrak{g}^\mathbb{C}:\left[H,X\right]=\alpha(H)X,\ \text{for all}\ H\in\mathfrak{a}^\mathbb{C}\right\}$$ is the corresponding root space associated with $\alpha.$\\
    
    Since $\mathfrak{a}^\mathbb{C}\subseteq\mathfrak{k}^\mathbb{C},$ then $\mathfrak{a}^{\mathbb{C}}$ is also a Cartan subalgebra of $\mathfrak{k}^\mathbb{C}.$ Consequently, there exists a root system $R_K\subseteq R$ of $\mathfrak{k}^\mathbb{C}$ with respect to $\mathfrak{a}^\mathbb{C}$ such that $$\mathfrak{k}^\mathbb{C}=\mathfrak{a}^\mathbb{C}\oplus\left(\sum\limits_{\alpha\in R_K}\mathfrak{g}_{\alpha}^\mathbb{C}\right).$$
    Fix a set $R^+$ of positive roots and define $$R_K^+:=R^+\cap R_K,\ R_M:=R-R_K,\ \text{and}\ R_M^+:=R^+-R_K^+.$$ Choose a Weyl basis $\{E_\alpha,H_\alpha:\alpha\in R\}$ of $\mathfrak{g}^\mathbb{C}$ such that $H_\alpha=\left[E_\alpha,E_{-\alpha}\right]$ and $B(E_\alpha,E_{-\alpha})=-1$ for all $\alpha\in R.$ Then $$\mathfrak{k}=\left(\sum\limits_{\alpha\in R^+}\mathbb{R}\sqrt{-1}H_\alpha\right)\oplus\left(\sum\limits_{\alpha\in R_K^+}\left(\mathbb{R}A_\alpha\oplus\mathbb{R}S_\alpha\right)\right)$$ and $$\mathfrak{m}=\sum\limits_{\alpha\in R_M^+}\left(\mathbb{R}A_\alpha\oplus\mathbb{R}S_\alpha\right),$$ where $$A_\alpha:=E_{\alpha}-E_{-\alpha}\ \text{and}\  S_\alpha:=\sqrt{-1}\left(E_{\alpha}+E_{-\alpha}\right),\ \alpha\in R.$$ Let $\Pi=\{\alpha_1,...,\alpha_r,\phi_1,...,\phi_k\}$ be a system of simple roots of $R$ such that $\Pi_K:=\{\phi_1,...,\phi_k\}$ is a system of simple roots of $R_K$ and let $\{\Lambda_1,...,\Lambda_r,\tilde{\Lambda}_1,...,\tilde{\Lambda}_{k}\}$ be the set of fundamental weights corresponding to $\Pi.$ Consider the subspace $$\mathfrak{t}=\{X\in\sqrt{-1}\mathfrak{a}:\phi(X)=0,\ \text{for all}\ \phi\in R_K\}\subseteq\mathfrak{a}^\mathbb{C},$$ and the restriction map $$\kappa:\left(\mathfrak{a}^\mathbb{C}\right)^*\rightarrow\mathfrak{t}^*;\ \alpha\mapsto \alpha\big{|}_{\mathfrak{t}}.$$ The elements of $R_\mathfrak{t}:=\kappa(R_M)$ and $R^+_\mathfrak{t}:=\kappa(R_M^+)$ are called $\mathfrak{t}$-roots and positive $\mathfrak{t}$-roots respectively. If $R_\mathfrak{t}^+=\{\xi_1,...,\xi_s\},$ and we set \begin{equation*}\label{m:i}
    \mathfrak{m}_i:=\sum\limits_{\begin{subarray}{c}\alpha\in R^+_M\\\kappa(\alpha)=\xi_i\end{subarray}}\left(\mathbb{R}A_\alpha\oplus\mathbb{R}S_\alpha\right),\ i=1,...,s,
    \end{equation*}
    then each $\mathfrak{m}_i$ is an $\Ad(K)$-invariant irreducible subspace of $\mathfrak{m}$ and $$\mathfrak{m}=\mathfrak{m}_1\oplus\cdots\oplus\mathfrak{m}_s,$$ where $\mathfrak{m}_1,...,\mathfrak{m}_s$ are pairwise $(\cdot,\cdot)$-orthogonal and $\mathfrak{m}_i\ncong\mathfrak{m}_j,\ \text{for}\ i\neq j.$\\

    For the corresponding $M$-space $G/K_1,$ a reductive $(\cdot,\cdot)$-orthogonal decomposition of $\mathfrak{g}$ is given by $$\mathfrak{g}=\mathfrak{k_1}\oplus(\mathfrak{s}\oplus\mathfrak{m}),$$ where $\mathfrak{k}_1$ and $\mathfrak{s}$ are the the Lie algebras of $K_1$ and $S$ respectively. Consequently, $\mathfrak{n}:=\mathfrak{s}\oplus\mathfrak{m}$ is identified with the tangent space $T_o(G/K_1)$ at $o=eK_1.$ Since $K_1\subseteq K,$ then each $\mathfrak{m}_i$ is an $\Ad(K_1)$-invariant (not necessarily irreducible) subspace of $\mathfrak{n}.$  On the other hand, the adjoint representation of $K_1$ on $\mathfrak{s}$ is trivial, given that $K_1\subseteq K$ and $K$ centralizes $S.$ The Lie algebra $\mathfrak{s}$ can be written as
    $$\mathfrak{s}=\mathbb{R}\sqrt{-1}h_{\Lambda_1}\oplus\cdots\oplus \mathbb{R}\sqrt{-1}h_{\Lambda_r},$$ where for each $\alpha\in\left(\mathfrak{a}^\mathbb{C}\right)^*,$ $h_\alpha$ is defined implicitly by the formula $(H,h_\alpha)=\alpha(H).$

    \begin{theorem}[\cite{AWZ}]\label{AWZ:theorem} Let $\mathfrak{m}_i$ be an $\Ad(K)$-invariant irreducible subspace of $\mathfrak{m}$ and suppose that $\mathfrak{m}_i$ is $\Ad(K_1)$-reducible. Then $\mathfrak{m}_i$ is decomposed as $\mathfrak{m}_i=\mathfrak{n}_1^{i}\oplus\mathfrak{n}_2^{i},$ where $\mathfrak{n}_1^{i}$ and $\mathfrak{n}_2^{i}$ are $\Ad(K_1)$-invariant irreducible subspaces.
    \end{theorem}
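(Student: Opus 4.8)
The plan is to pass to complexifications, use the $\mathfrak{t}$-root gradation to split $\mathfrak{m}_i^{\mathbb{C}}$ into two pieces, and then apply Schur's lemma to the central torus $S$ to control how these pieces behave under $K_1$. Concretely, set
$$\mathfrak{m}_i^{+}:=\bigoplus_{\substack{\alpha\in R_M\\\kappa(\alpha)=\xi_i}}\mathfrak{g}_\alpha^{\mathbb{C}},\qquad \mathfrak{m}_i^{-}:=\bigoplus_{\substack{\alpha\in R_M\\\kappa(\alpha)=-\xi_i}}\mathfrak{g}_\alpha^{\mathbb{C}}.$$
Since $\kappa(\alpha)\in R_{\mathfrak{t}}^{+}$ for every $\alpha\in R_M^{+}$, the roots with $\kappa(\alpha)=\xi_i$ are all positive and those with $\kappa(\alpha)=-\xi_i$ are all negative, so $\mathfrak{m}_i^{+}$ is spanned by the $E_\alpha$ with $\alpha\in R_M^{+},\ \kappa(\alpha)=\xi_i$ and $\mathfrak{m}_i^{-}$ by the corresponding $E_{-\alpha}$; hence $\mathfrak{m}_i^{\mathbb{C}}=\mathfrak{m}_i^{+}\oplus\mathfrak{m}_i^{-}$. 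Because $[\mathfrak{g}_\beta^{\mathbb{C}},\mathfrak{g}_\alpha^{\mathbb{C}}]\subseteq\mathfrak{g}_{\alpha+\beta}^{\mathbb{C}}$ and $\kappa(\beta)=0$ for $\beta\in R_K$, each of $\mathfrak{m}_i^{\pm}$ is $\mathfrak{k}^{\mathbb{C}}$-invariant; moreover the conjugation $E_\alpha\mapsto -E_{-\alpha}$ defining the compact real form interchanges $\mathfrak{m}_i^{+}$ and $\mathfrak{m}_i^{-}$, so $\mathfrak{m}_i$ is a genuine real form of $\mathfrak{m}_i^{+}\oplus\mathfrak{m}_i^{-}$.

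Next I would show that $\mathfrak{m}_i^{+}$ and $\mathfrak{m}_i^{-}$ are irreducible $\mathfrak{k}_1^{\mathbb{C}}$-modules. First, they are irreducible as $\mathfrak{k}^{\mathbb{C}}$-modules: $\mathfrak{m}_i$ is a real $\Ad(K)$-irreducible module, so its complexification $\mathfrak{m}_i^{\mathbb{C}}$ is either irreducible or a direct sum of two irreducible $\mathfrak{k}^{\mathbb{C}}$-submodules; since it is exhibited as the sum of the two nonzero submodules $\mathfrak{m}_i^{\pm}$, each of them must be irreducible (this is also precisely the standard $\mathfrak{t}$-root description of the irreducible submodules of $\mathfrak{m}^{\mathbb{C}}$, cf.\ \cite{AWZ}). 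Now $K=S\times K_1$ with $K_1$ semisimple, so $\mathfrak{s}^{\mathbb{C}}$ is a central ideal of $\mathfrak{k}^{\mathbb{C}}$ and $\mathfrak{k}^{\mathbb{C}}=\mathfrak{s}^{\mathbb{C}}\oplus\mathfrak{k}_1^{\mathbb{C}}$. By Schur's lemma $\mathfrak{s}^{\mathbb{C}}$ acts by scalars on each $\mathfrak{k}^{\mathbb{C}}$-irreducible $\mathfrak{m}_i^{\pm}$, so any $\mathfrak{k}_1^{\mathbb{C}}$-submodule of $\mathfrak{m}_i^{\pm}$ is automatically $\mathfrak{s}^{\mathbb{C}}$-invariant, hence $\mathfrak{k}^{\mathbb{C}}$-invariant, hence $0$ or all of $\mathfrak{m}_i^{\pm}$. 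Therefore $\mathfrak{m}_i^{\mathbb{C}}$ is a direct sum of exactly two irreducible $\mathfrak{k}_1^{\mathbb{C}}$-modules.

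It remains to descend to the reals. Let $\mathfrak{m}_i=U_1\oplus\cdots\oplus U_k$ be a decomposition into $\Ad(K_1)$-irreducible real submodules. Complexifying gives $\mathfrak{m}_i^{\mathbb{C}}=\bigoplus_j U_j^{\mathbb{C}}$, and each $U_j^{\mathbb{C}}$ contains at least one irreducible $\mathfrak{k}_1^{\mathbb{C}}$-constituent, so $k\le 2$ by the previous step. On the other hand, the hypothesis that $\mathfrak{m}_i$ is $\Ad(K_1)$-reducible forces $k\ge 2$. Hence $k=2$, and setting $\mathfrak{n}_1^{i}:=U_1$, $\mathfrak{n}_2^{i}:=U_2$ yields the asserted decomposition $\mathfrak{m}_i=\mathfrak{n}_1^{i}\oplus\mathfrak{n}_2^{i}$ into two $\Ad(K_1)$-invariant irreducible subspaces.

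The step I expect to be the crux is the middle one: verifying that the $\mathfrak{t}$-graded pieces $\mathfrak{m}_i^{\pm}$ are $\mathfrak{k}^{\mathbb{C}}$-irreducible and then survive as irreducible $\mathfrak{k}_1^{\mathbb{C}}$-modules. The first point rests either on the general correspondence between real irreducibility and the complex decomposition of the complexification, or on the $\mathfrak{t}$-root machinery of \cite{AWZ}; the second is a clean Schur argument, but one must keep track of the conjugation interchanging $\mathfrak{m}_i^{+}$ and $\mathfrak{m}_i^{-}$, and note that regardless of whether $\mathfrak{m}_i^{+}\cong\mathfrak{m}_i^{-}$ as $\mathfrak{k}_1^{\mathbb{C}}$-modules, the number of complex constituents of $\mathfrak{m}_i^{\mathbb{C}}$ is still exactly $2$, which is all that the reality count in the last paragraph requires.
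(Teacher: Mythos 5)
This theorem is imported from \cite{AWZ} and the paper gives no proof of its own, so there is nothing internal to compare against; judged on its own terms, your argument is correct. Your route --- splitting $\mathfrak{m}_i^{\mathbb{C}}$ into the $\pm\xi_i$ graded pieces, getting their $\mathfrak{k}^{\mathbb{C}}$-irreducibility from the fact that the complexification of a real irreducible module has at most two irreducible constituents, using Schur's lemma to make the central factor $\mathfrak{s}^{\mathbb{C}}$ act by scalars so that $\mathfrak{k}_1^{\mathbb{C}}$-irreducibility follows, and then counting constituents to bound the number of real $\Ad(K_1)$-summands by two --- is essentially the standard argument used in \cite{AWZ}. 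The only point worth making explicit is that you pass freely between $\Ad(K_1)$-invariance and $\mathfrak{k}_1^{\mathbb{C}}$-invariance, which is legitimate because $K_1$, being the semisimple factor of the connected centralizer $K$, is connected.
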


    As a consequence of Theorem \ref{AWZ:theorem}, we have that the decomposition of $\mathfrak{n}$ into $\Ad(K_1)$-irreducible submodules has the form \begin{equation*}
        \mathfrak{n}=\mathbb{R}\sqrt{-1}h_{\Lambda_1}\oplus\cdots\oplus \mathbb{R}\sqrt{-1}h_{\Lambda_r}\oplus(\mathfrak{n}_1^1\oplus\mathfrak{n}_2^1)\oplus\cdots\oplus(\mathfrak{n}_1^{r'}\oplus\mathfrak{n}_2^{r'})\oplus\mathfrak{m}_{r'+1}\oplus\cdots\oplus\mathfrak{m}_s,
    \end{equation*}
    where $\mathbb{R}\sqrt{-1}h_{\Lambda_i}\cong\mathbb{R}\sqrt{-1}h_{\Lambda_j},\ i,j=1,...,r$ and $\mathfrak{n}_1^{i}\cong\mathfrak{n}_2^{i},\ i=1,...,r'.$
    \begin{remark} It is possible that each $\Ad(K)$-irreducible submodule is also $\Ad(K_1)$-irreducible. Consequently, the terms $(\mathfrak{n}_1^i\oplus\mathfrak{n}_2^{i})$ will not appear in the decomposition above. In such situations, we can set $r'=0.$ 
        
    \end{remark}
    \begin{lemma}\label{lemma:4:3} Let $X=X_{\mathfrak{s}}+X_{\mathfrak{m}}\in\mathfrak{n}$  \textnormal{(}$X_{\mathfrak{s}}\in\mathfrak{s},$ $X_{\mathfrak{m}}\in\mathfrak{m}$\textnormal{)} be an equigeodesic vector on the Riemannian $M$-space $G/K_1.$ Then either $X_{\mathfrak{m}}=0,$ or $X_{\mathfrak{s}}=0$ and $X_{\mathfrak{m}}$ is an equigeodesic vector on the generalized flag manifold $G/K.$ 
    \end{lemma}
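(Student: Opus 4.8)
The engine of the argument is Corollary \ref{CGN:condition} applied to $G/K_1$. Since $X\in\mathfrak{n}=T_o(G/K_1)$ we have $X_{\mathfrak{n}}=X$, so $X$ being equigeodesic on $G/K_1$ means exactly that
$$[X,AX]_{\mathfrak{n}}=0\qquad\text{for every metric operator }A\text{ on }\mathfrak{n}.$$
The plan is to feed this identity two well-chosen families of metric operators, exploiting two structural facts recorded above: $\Ad(K_1)$ acts trivially on $\mathfrak{s}$ (because $K_1\subseteq K$ and $K$ centralizes $S$), and each $\mathfrak{m}_i$ is $\Ad(K_1)$-invariant. If $X_{\mathfrak{m}}=0$ there is nothing to prove, so I assume $X_{\mathfrak{m}}\neq 0$ and aim to show first that $X_{\mathfrak{s}}=0$ and then that $X=X_{\mathfrak{m}}$ is equigeodesic on $G/K$.

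For the first step I would argue by contradiction, assuming also $X_{\mathfrak{s}}\neq 0$. Given any symmetric positive-definite $S\colon\mathfrak{s}\to\mathfrak{s}$, let $A_S\colon\mathfrak{n}\to\mathfrak{n}$ equal $S$ on $\mathfrak{s}$ and $\mathrm{Id}$ on $\mathfrak{m}$. Since $\mathfrak{s}\perp\mathfrak{m}$, $\Ad(K_1)$ is trivial on $\mathfrak{s}$, and $A_S$ is scalar on $\mathfrak{m}$, the operator $A_S$ is positive-definite, self-adjoint for $(\cdot,\cdot)$ and commutes with $\Ad(h)\big|_{\mathfrak{n}}$ for all $h\in K_1$; hence by (the converse half of) Proposition \ref{properties:metric:operator} it is a metric operator. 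Using $[\mathfrak{s},\mathfrak{s}]=0$, $[X_{\mathfrak{m}},X_{\mathfrak{m}}]=0$ and $[\mathfrak{k},\mathfrak{m}]\subseteq\mathfrak{m}\subseteq\mathfrak{n}$, the identity $[X,A_SX]_{\mathfrak{n}}=0$ collapses to
$$0=[X_{\mathfrak{s}},X_{\mathfrak{m}}]+[X_{\mathfrak{m}},SX_{\mathfrak{s}}]=\big[X_{\mathfrak{m}},(S-\mathrm{Id})X_{\mathfrak{s}}\big].$$
Letting $S$ run over $\mathrm{Id}+\varepsilon B$ with $B$ symmetric and $\varepsilon>0$ small, and noting that $\{BX_{\mathfrak{s}}:B\text{ symmetric}\}=\mathfrak{s}$ because $X_{\mathfrak{s}}\neq 0$, I obtain $[X_{\mathfrak{m}},H]=0$ for all $H\in\mathfrak{s}$. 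Therefore $X_{\mathfrak{m}}\in\mathfrak{z}_{\mathfrak{g}}(\mathfrak{s})=\mathfrak{k}$ (as $K=Z_G(S)$); combined with $X_{\mathfrak{m}}\in\mathfrak{m}$ and $\mathfrak{m}\cap\mathfrak{k}=\{0\}$ this forces $X_{\mathfrak{m}}=0$, a contradiction. Hence $X_{\mathfrak{s}}=0$ and $X=X_{\mathfrak{m}}\in\mathfrak{m}$.

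For the second step, I would take an arbitrary $G$-invariant metric on $G/K$ with metric operator $A_0$; since the summands $\mathfrak{m}_1,\dots,\mathfrak{m}_s$ are pairwise inequivalent, Theorem \ref{main:1} gives $A_0=\sum_{i=1}^s\mu_iP_i$ with all $\mu_i>0$. Let $A\colon\mathfrak{n}\to\mathfrak{n}$ be $\mathrm{Id}$ on $\mathfrak{s}$ and $A_0$ on $\mathfrak{m}$; exactly as before this is a metric operator on $\mathfrak{n}$. Since $AX=A_0X\in\mathfrak{m}$, equigeodesy of $X$ on $G/K_1$ gives $0=[X,A_0X]_{\mathfrak{n}}$, and projecting onto the summand $\mathfrak{m}$ of $\mathfrak{n}$ yields $[X,A_0X]_{\mathfrak{m}}=0$. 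As $A_0$ was arbitrary, Corollary \ref{CGN:condition} applied to $G/K$ shows $X$ is equigeodesic on $G/K$, finishing the proof.

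I expect the only genuinely delicate point to be the verification that the auxiliary operators $S\oplus\mathrm{Id}_{\mathfrak{m}}$ and $\mathrm{Id}_{\mathfrak{s}}\oplus A_0$ really are metric operators of $G$-invariant metrics on $G/K_1$: this is precisely where the triviality of $\Ad(K_1)\big|_{\mathfrak{s}}$ and the $\Ad(K_1)$-invariance of the $\mathfrak{m}_i$ are used, through the converse half of Proposition \ref{properties:metric:operator}. Everything else reduces to the elementary bracket identities above and the standard fact $\mathfrak{z}_{\mathfrak{g}}(\mathfrak{s})=\mathfrak{k}$.
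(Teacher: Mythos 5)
Your proof is correct, and it reaches both conclusions by a more direct route than the paper. The paper feeds the equigeodesic condition through the general machinery of Section \ref{section:3}: it invokes the necessary condition of Theorem \ref{main:2} for the explicit $\Ad(K_1)$-irreducible decomposition of $\mathfrak{n}$ (with the equivalent one-dimensional summands $\mathbb{R}Z_i$ of $\mathfrak{s}$), manipulates the resulting bracket identities with the coefficients $c_i$ to force $\left[X_{\mathfrak{m}},Z_i\right]=0$, and then, in the case $X_{\mathfrak{s}}=0$, appeals to the characterization of Theorem \ref{main:3} on $G/K$. You instead build the relevant metric operators by hand — $S\oplus\mathrm{Id}_{\mathfrak{m}}$ and $\mathrm{Id}_{\mathfrak{s}}\oplus A_0$ — verify the three conditions of Proposition \ref{properties:metric:operator} directly (which is exactly where the triviality of $\Ad(K_1)\big|_{\mathfrak{s}}$ and the $\Ad(K_1)$-invariance of the $\mathfrak{m}_i$ enter), and read off both conclusions from Corollary \ref{CGN:condition}. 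For the first step the two arguments are essentially the same computation in different clothing: the family $\mathrm{Id}+\varepsilon B$ with $B$ symmetric is precisely the perturbation family of Corollary \ref{instrumental:corollary} restricted to the isotypical block $\mathfrak{s}$, and both proofs finish with the same centralizer argument $\mathfrak{z}_{\mathfrak{g}}(\mathfrak{s})\cap\mathfrak{m}=\mathfrak{k}\cap\mathfrak{m}=\{0\}$. For the second step your argument is genuinely different and arguably cleaner: by observing that every metric operator $A_0=\sum_i\mu_iP_i$ of $G/K$ extends to a metric operator of $G/K_1$, you get $\left[X,A_0X\right]_{\mathfrak{m}}=0$ for \emph{all} invariant metrics on $G/K$ at once, bypassing Theorem \ref{main:3}; the price is that you use Theorem \ref{main:1} to know $A_0$ is diagonal, whereas the paper's route only needs the necessary condition plus the sufficiency statement it has already established. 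Both are valid; yours is more self-contained relative to Section \ref{Preliminaries}, the paper's is more uniform with how Theorem \ref{equigeodesic:M:spaces} is subsequently proved.
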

    \begin{proof} Since $(\cdot,\cdot)$ is $\Ad(K)$-invariant, then it is also $\Ad(K_1)$-invariant. Therefore, we may assume that $\mathfrak{n}_1^{i}$ and $\mathfrak{n}_2^{i}$ are $(\cdot,\cdot)$-orthogonal. Let $\{Z_1,...,Z_r\}$ be an $(\cdot,\cdot)$-orthonormal basis of $\mathfrak{s}.$ Since the adjoint representation of $K_1$ on $\mathfrak{s}$ is trivial, then the subspaces $\mathbb{R}Z_i,\ i=1,...,r$ are  $\Ad(K_1)$-invariant irreducible and pairwise $(\cdot,\cdot)$-orthogonal subspaces of $\mathfrak{n}.$ Let \begin{align}\label{projections}
    \begin{split}
    &P_i:\mathfrak{n}\rightarrow\mathbb{R}Z_i,\ i=1,...,r,\\ &P_{ij}:\mathfrak{n}\rightarrow\mathfrak{n}_j^{i},\ i=1,..,r',\ j=1,2,\ \text{and}\\ &\tilde{P}_{i}:\mathfrak{n}\rightarrow \mathfrak{m}_i\ i=1,...,s
    \end{split}
    \end{align}
    be the orthogonal projections onto $\mathbb{R}Z_i,$  $\mathfrak{n}_j^{i},$ and $\mathfrak{m}_i,$ respectively. For each $i\in\{1,...,r'\},$ let $T_{i1}:\mathfrak{n}_1^i\rightarrow\mathfrak{n}_{2}^{i}$ be an intertwining operator preserving $(\cdot,\cdot)$ and let $T_{i2}:=T_{i1}^{-1}.$ Additionally, define $T_{i}^{j}:\mathbb{R}Z_i\rightarrow \mathbb{R}Z_j;\ cZ_i\mapsto cZ_j,\ i,j=1,...,r$ so that $T_i^{j}$ is also an interwining operator preserving $(\cdot,\cdot).$ If $X=X_{\mathfrak{s}}+X_{\mathfrak{m}}\in\mathfrak{n}$ is an equigeodesic vector, then, by Theorem \ref{main:2}, we have
    \begin{align}\label{main:formula:M:spaces}
    \begin{split}    &\left[X_{\mathfrak{s}}+X_{\mathfrak{m}},T_i^{j}(P_i(X))+T_{j}^{i}(P_j(X))\right]_{\mathfrak{n}}=0,\ i,j=1,...,r,\\
        &\left[X_{\mathfrak{s}}+X_{\mathfrak{m}},P_{ij}(X)\right]_{\mathfrak{n}}=0,\ i=1,...,r',\ j=1,2,\\
        &\left[X_{\mathfrak{s}}+X_{\mathfrak{m}},T_{i1}(P_{i1}(X))+T_{i2}(P_{i2}(X))\right]_{\mathfrak{n}}=0,\ i=1,...,r'\ \text{and}\\
        &[X_{\mathfrak{s}}+X_{\mathfrak{m}},\tilde{P}_i(X)]_{\mathfrak{n}}=0,\ i=r'+1,...,s.
    \end{split}
    \end{align}
    Let $X_{\mathfrak{s}}=\sum\limits_{k=1}^rc_kZ_k,$ then $P_k(X)=c_kZ_k,\ k=1,...,r$ and
    \begin{align*} 0=&\left[X_{\mathfrak{s}}+X_{\mathfrak{m}},T_i^{j}(P_i(X))+T_{j}^{i}(P_j(X))\right]_{\mathfrak{n}}\\
    =& \left[X_{\mathfrak{s}}+X_{\mathfrak{m}},T_i^j(c_iZ_i)+T_j^{i}(c_jZ_j)\right]_{\mathfrak{n}}\\
    =&\left[X_{\mathfrak{s}}+X_{\mathfrak{m}},c_iZ_j+c_jZ_i\right]_{\mathfrak{n}}\\
    =&\left[X_{\mathfrak{s}},c_iZ_j+c_jZ_i\right]_{\mathfrak{n}}+\left[X_{\mathfrak{m}},c_iZ_j+c_jZ_i\right]_{\mathfrak{n}}\\
    =&c_i\left[X_{\mathfrak{m}},Z_j\right]_{\mathfrak{n}}+c_j\left[X_{\mathfrak{m}},Z_i\right]_{\mathfrak{n}},\ i,j=1,...,r,
    \end{align*}
    where the last equality holds because $\mathfrak{s}$ is abelian and $X_{\mathfrak{s}}$ and $c_iZ_j+c_jZ_i$ are in $\mathfrak{s}.$ Observe that for $i=j$ we have that $$c_i\left[X_{\mathfrak{m}},Z_i\right]_{\mathfrak{n}}=0,\ i=1,...,r,$$ which  implies $$c_i^2\left[X_{\mathfrak{m}},Z_i\right]_{\mathfrak{n}}=0,\ i=1,...,r.$$ On the other hand, if $i\neq j,$ then 
    \begin{align*}
        c_i\left[X_{\mathfrak{m}},Z_j\right]_{\mathfrak{n}}+c_j\left[X_{\mathfrak{m}},Z_i\right]_{\mathfrak{n}}=0\Longrightarrow &\ c_i\left\{c_j\left[X_{\mathfrak{m}},Z_j\right]_{\mathfrak{n}}\right\}+c_j^2\left[X_{\mathfrak{m}},Z_i\right]_{\mathfrak{n}}=0\\
        \Longrightarrow &\ 0+c_j^2\left[X_{\mathfrak{m}},Z_i\right]_{\mathfrak{n}}=0\\
        \Longrightarrow &\ c_j^2\left[X_{\mathfrak{m}},Z_i\right]_{\mathfrak{n}}=0.,
    \end{align*}
    so we obtain $c_j^2\left[X_{\mathfrak{m}},Z_i\right]_{\mathfrak{n}}=0$ for all $i,j\in\{1,...,r\}.$ By adding this equalities in $j$ we have $$\left(c_1^2+\cdots+c_r^2\right)\left[X_{\mathfrak{m}},Z_i\right]_{\mathfrak{n}}=0.$$
    If $X_{\mathfrak{s}}\neq 0,$ then $c_1^2+\cdots+c_r^2\neq 0.$ Thus, $\left[X_{\mathfrak{m}},Z_i\right]_{\mathfrak{n}}=0$ and this equality holds for all $i\in\{1,...,r\}.$ Now, observe that $Z_i\in\mathfrak{s}\subseteq\mathfrak{k},$ so $\left[X_{\mathfrak{m}},Z_i\right]\in\left[\mathfrak{m},\mathfrak{k}\right]\subseteq\mathfrak{m},$ which implies that $\left[X_{\mathfrak{m}},Z_i\right]_{\mathfrak{k}_1}=0.$ Hence,
    \begin{align*}
        \left[X_{\mathfrak{m}},Z_i\right]=\left[X_{\mathfrak{m}},Z_i\right]_{\mathfrak{k}_1}+\left[X_{\mathfrak{m}},Z_i\right]_{\mathfrak{n}}=0+0=0,\ i=1,...,r.
    \end{align*} Since $\{Z_1,...,Z_r\}$ is a basis of $\mathfrak{s},$ we conclude that $X_{\mathfrak{m}}$ is in the centralizer of $\mathfrak{s}$ in $\mathfrak{g},$ that is, $X_{\mathfrak{m}}\in\mathfrak{k}.$ But $X_{\mathfrak{m}}$ is also in $\mathfrak{m},$ so $X_{\mathfrak{m}}\in\mathfrak{k}\cap\mathfrak{m}=\{0\},$ i.e., $X_{\mathfrak{m}}=0.$ So far, we have established that if $X\in\mathfrak{n}$ is an equigeodesic vector, then $X_{\mathfrak{s}}=0$ or $X_{\mathfrak{m}}=0.$ Now, let us assume that $X\in\mathfrak{n}$ is equigeodesic and that $X_{\mathfrak{s}}=0.$ Then, by \eqref{main:formula:M:spaces}, we can conclude that $$\left[X_{\mathfrak{m}},\tilde{P}_i(X_{\mathfrak{m}})\right]_{\mathfrak{n}}=0,\ i=r'+1,...,s,$$ and $$\ \left[X_{\mathfrak{m}},P_{ij}(X_{\mathfrak{m}})\right]_{\mathfrak{n}}=0,\ i=1,...,r',\ j=1,2.$$ 
    
    Now, note that $P_{i1}+P_{i2}=\tilde{P}_i,\ i=1,...,r'.$ Thus,  we have $$\left[X_{\mathfrak{m}},\tilde{P}_i(X_{\mathfrak{m}})\right]_{\mathfrak{n}}=0,\ i=1,...,s.$$ Since, $\mathfrak{m}\subseteq \mathfrak{n},$ this implies $$\left[X_{\mathfrak{m}},\tilde{P}_i(X_{\mathfrak{m}})\right]_{\mathfrak{n}}=0,\ i=1,...,s\Longrightarrow \left[X_{\mathfrak{m}},\tilde{P}_i(X_{\mathfrak{m}})\right]_{\mathfrak{m}}=0,\ i=1,...,s.$$ Due to Theorem \ref{main:3}, this indicates that $X_{\mathfrak{m}}$ is equigeodesic on the generalized flag manifold $G/K$ because, within the isotropy representation of $G/K,$ there are not equivalent summands.
    \end{proof}
    \begin{theorem}\label{equigeodesic:M:spaces}
        Let $G/K_1$ be the Riemannian $M$-space corresponding to the generalized flag manifold $G/K,$ let $(\cdot,\cdot)$ be the negative of the Killing form on $\mathfrak{g},$ and let $$\mathfrak{g}=\mathfrak{k}_1\oplus\mathfrak{n}=\mathfrak{k}_1\oplus\mathfrak{s}\oplus\mathfrak{m}$$ be the corresponding $(\cdot,\cdot)$-orthogonal reductive decomposition of $\mathfrak{g}.$\\ 
        
        \begin{itemize}
            \item[$(1)$] If each $\Ad(K)$-irreducible submodule of $\mathfrak{m}$ is also $\Ad(K_1)$-irreducible, then $X\in\mathfrak{n}$ is equigeodesic on $G/K_1$ if and only if either $X\in\mathfrak{s},$ or $X\in\mathfrak{m}$ and $X$ is equigeodesic on $G/K.$\\

            \item[$(2)$] If $\dim\mathfrak{m}_j>2$ for each $\Ad(K)$-irreducible submodule $\mathfrak{m}_j$ which is also $\Ad(K_1)$-irreducible, then $X\in\mathfrak{n}$ is equigeodesic on $G/K_1$ if and only if either $X\in\mathfrak{s},$ or $X\in\mathfrak{m}$ and $X$ is equigeodesic on $G/K$ and either $X_{\mathfrak{m}_i}=0$ or  $\left[X,\mathfrak{m}_i\right]_{\mathfrak{n}}=0,$ for each $\Ad(K)$-irreducible submodule $\mathfrak{m}_i$ with $\dim\mathfrak{m}_i=2.$
        \end{itemize}
    \end{theorem}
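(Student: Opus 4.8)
The plan is to read the ``only if'' implications off Lemma~\ref{lemma:4:3} and to prove the ``if'' implications by verifying the criterion of Corollary~\ref{CGN:condition}, namely that $X$ is equigeodesic on $G/K_1$ as soon as $[X,AX]_{\mathfrak{n}}=0$ for every metric operator $A$ of $G/K_1$; here the block description of metric operators from Theorem~\ref{main:1} (together with the reduction carried out in the proof of Theorem~\ref{main:2}) is the main tool.

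For the forward direction I would simply quote Lemma~\ref{lemma:4:3}: an equigeodesic vector $X=X_{\mathfrak{s}}+X_{\mathfrak{m}}$ on $G/K_1$ satisfies $X_{\mathfrak{m}}=0$, i.e.\ $X\in\mathfrak{s}$, or $X_{\mathfrak{s}}=0$, i.e.\ $X\in\mathfrak{m}$ and $X$ is equigeodesic on $G/K$ --- which is exactly the forward implication of $(1)$. For $(2)$ I still have to recover, in the second alternative, the condition on the summands with $\dim\mathfrak{m}_i=2$. Such an $\mathfrak{m}_i$ cannot be $\Ad(K_1)$--irreducible under the hypothesis of $(2)$, so by Theorem~\ref{AWZ:theorem} it splits as $\mathfrak{m}_i=\mathfrak{n}_1^i\oplus\mathfrak{n}_2^i$ into two $\Ad(K_1)$--trivial lines; feeding the projections $P_{i1},P_{i2}$ and the intertwiner $T_{i1}$ of \eqref{projections} into Theorem~\ref{main:2} applied to $G/K_1$ yields
\[
[X,\mathfrak{n}_1^i]_{\mathfrak{n}}=[X,\mathfrak{n}_2^i]_{\mathfrak{n}}=\bigl[X,\,T_{i1}(P_{i1}X)+T_{i2}(P_{i2}X)\bigr]_{\mathfrak{n}}=0,
\]
and combining these identities exactly as the $\mathfrak{s}$--component is treated in the proof of Lemma~\ref{lemma:4:3} (weight the first two by the coordinates of $X_{\mathfrak{m}_i}$, add, then use the third) forces $X_{\mathfrak{m}_i}=0$ or $[X,\mathfrak{m}_i]_{\mathfrak{n}}=0$.

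For the converse, suppose first $X\in\mathfrak{s}$. Under the hypothesis of $(1)$ no $\Ad(K)$--irreducible summand of $\mathfrak{m}$ contains a trivial $\Ad(K_1)$--submodule (each is $\Ad(K_1)$--irreducible of dimension $>2$), so $\mathfrak{s}$ is precisely the isotypical summand of the trivial $\Ad(K_1)$--representation inside $\mathfrak{n}$; Theorem~\ref{main:1} then gives $A(\mathfrak{s})\subseteq\mathfrak{s}$, and $\mathfrak{s}$ being abelian yields $[X,AX]=0$. In the setting of $(2)$ I would run the same argument noting that $A(\mathfrak{s})$ now lies in $\mathfrak{s}$ together with the two--dimensional $\Ad(K_1)$--trivial summands, whose relevant brackets with $X$ are controlled by the two--dimensional condition, so again $[X,AX]_{\mathfrak{n}}=0$. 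Now let $X\in\mathfrak{m}$ be equigeodesic on $G/K$ (and, in case $(2)$, satisfy the two--dimensional condition). Writing $A$ in block form with respect to the $\Ad(K_1)$--decomposition of $\mathfrak{n}$, each diagonal block on an $\Ad(K_1)$--irreducible $\mathfrak{m}_i$ is a self--adjoint equivariant endomorphism, hence a positive multiple $\mu_i$ of the identity regardless of whether $\textnormal{End}(\mathfrak{m}_i)$ is $\mathbb{R}$, $\mathbb{C}$ or $\mathbb{H}$; repeating the computation in the proof of Theorem~\ref{main:2} gives
\[
[X,AX]_{\mathfrak{n}}=\sum_i\mu_i\,[X,\widetilde{P}_i(X)]_{\mathfrak{n}}+\sum_{i>j}a_{ij}\,\bigl[X,\,T_i^j(\widetilde{P}_iX)+T_j^i(\widetilde{P}_jX)\bigr]_{\mathfrak{n}},
\]
the second sum ranging only over pairs with $\mathfrak{m}_i\cong\mathfrak{m}_j$ as $\Ad(K_1)$--modules.

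The terms $[X,\widetilde{P}_i(X)]_{\mathfrak{n}}$ vanish: their $\mathfrak{m}$--part is zero because $X$ is equigeodesic on $G/K$, where every invariant metric is diagonal, and their $\mathfrak{s}$--part is zero because $[\mathfrak{m}_k,\mathfrak{m}_i]_{\mathfrak{s}}=0$ unless $k=i$ by the $\mathfrak{t}$--root grading (a nonzero $\mathfrak{s}$--component would force the $\mathfrak{t}$--weight $0$, hence $\xi_k=\xi_i$). The step I expect to be the main obstacle is the vanishing of the mixed brackets: here $\mathfrak{m}_i\cong\mathfrak{m}_j$ over $K_1$ while $\xi_i\neq\xi_j$, so I would choose the intertwiners $T_i^j$ compatibly with the vectors $A_\alpha,S_\alpha$, expand $T_i^j(\widetilde{P}_iX)+T_j^i(\widetilde{P}_jX)$ and $X$ in the $\mathfrak{t}$--root grading, and use $\mathfrak{t}$--weight matching together with the relations $[X,\widetilde{P}_k(X)]_{\mathfrak{m}}=0$ and their $\Ad(S)$--rotated versions (still equigeodesic on $G/K$) to kill the $\mathfrak{n}$--component. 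Once this is established, $[X,AX]_{\mathfrak{n}}=0$ for every metric operator $A$, and Corollary~\ref{CGN:condition} completes the proof.
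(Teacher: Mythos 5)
Your overall architecture coincides with the paper's: the ``only if'' parts are read off Lemma \ref{lemma:4:3}, the extra condition on the two-dimensional summands in $(2)$ is extracted from the equations of Theorem \ref{main:2} by exactly the weighting-and-adding manipulation you describe (the paper's $x_{\gamma_i}^2+y_{\gamma_i}^2$ computation), and the ``if'' parts are verified through the block description of metric operators. One comment on the ``if'' direction of $(1)$: the ``main obstacle'' you anticipate --- mixed brackets between distinct $\mathfrak{m}_i\cong\mathfrak{m}_j$ as $\Ad(K_1)$-modules with $\xi_i\neq\xi_j$ --- does not arise in the paper's argument. Following \cite{AWZ}, the only equivalences among the $\Ad(K_1)$-irreducible summands of $\mathfrak{n}$ are the mutually equivalent trivial lines and the pairs $\mathfrak{n}_1^i\cong\mathfrak{n}_2^i$ inside a single $\mathfrak{m}_i$; under the hypothesis of $(1)$ the $\mathfrak{m}_i$ therefore remain pairwise inequivalent over $K_1$, your second sum is empty apart from the $\mathfrak{s}$-block, and the vague ``$\Ad(S)$-rotated'' argument is not needed. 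The paper instead invokes Theorem \ref{main:3} (its hypothesis holds because every summand of multiplicity greater than one is one-dimensional) and kills the $\mathfrak{s}$-components of $[X,\tilde{P}_i(X)]$ by the same root computation you sketch.

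The genuine gap is your treatment of $X\in\mathfrak{s}$ in the ``if'' direction of $(2)$. You correctly observe that $A(\mathfrak{s})$ is no longer contained in $\mathfrak{s}$: when $\dim\mathfrak{m}_i=2$ the lines $\mathbb{R}A_{\gamma_i}$ and $\mathbb{R}S_{\gamma_i}$ are trivial one-dimensional $\Ad(K_1)$-modules, hence sit in the same isotypical component as $\mathfrak{s}$, and a metric operator may send $Z_j\mapsto Z_j+\epsilon A_{\gamma_i}$. But the assertion that the resulting brackets are ``controlled by the two-dimensional condition'' does not hold: for $X\in\mathfrak{s}$ that condition is vacuous ($X_{\mathfrak{m}_i}=0$ automatically) and says nothing about $[X,\mathfrak{m}_i]_{\mathfrak{n}}$. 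What must actually vanish is $[X,A_{\gamma_i}]_{\mathfrak{n}}=\gamma_i(\cdot)\,S_{\gamma_i}$ and its $S_{\gamma_i}$-analogue, and since $\gamma_i\in R_M^+$ does not vanish on $\mathfrak{t}$, these are nonzero for generic $X\in\mathfrak{s}$. So this step requires a genuinely new argument or an additional restriction on the admissible metric operators; be aware that the paper itself compresses it into ``proceed as in the reverse implication of $(1)$,'' which hides exactly this difficulty, because in case $(1)$ the trivial isotypical component is all of $\mathfrak{s}$ and the relevant brackets die by commutativity of $\mathfrak{s}$. You should either supply the missing argument here or flag that the claim for $X\in\mathfrak{s}$ in case $(2)$ does not follow from the criteria you have assembled.
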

    \begin{proof} We will use the notations introduced in the proof of Lemma \ref{lemma:4:3}. Let us prove $(1).$ If $X\in\mathfrak{n}$ is equigeodesic on $G/K_1,$ then, by Lemma \ref{lemma:4:3}, $X\in\mathfrak{s},$ or $X\in\mathfrak{m}$  and $X$ is equigeodesic on $G/K.$ For the reverse implication, observe that since each $\Ad(K)$-irreducible submodule of $\mathfrak{m}$ is also $\Ad(K_1)$-irreducible, then the only isotropy summands of $\mathfrak{n}$ with multiplicity greater than one are $$\mathbb{R}Z_i,\ i=1,...,r,$$ each of them having dimension one. Consequently, $\textnormal{End}(\mathbb{R}Z_i)\cong \mathbb{R},\ i=1,...,r,$ and hypothesis of Theorem \ref{main:3} are satisfied. This means that a vector $X=X_{\mathfrak{s}}+X_{\mathfrak{m}}\in\mathfrak{n}$ is equigeodesic on $G/K_1$ if and only if
    \begin{align*}
    \begin{split}    &\left[X_{\mathfrak{s}}+X_{\mathfrak{m}},T_i^{j}(P_i(X))+T_{j}^{i}(P_j(X))\right]_{\mathfrak{n}}=0,\ i,j=1,...,r,\\
    &[X_{\mathfrak{s}}+X_{\mathfrak{m}},\tilde{P}_i(X)]_{\mathfrak{n}}=0,\ i=1,...,s.
    \end{split}
    \end{align*} 
    If $X\in\mathfrak{s}$ then $X_{\mathfrak{m}}=\tilde{P}_i(X)=0,\ i=1,...,s.$ Thus, $X$ is equigeodesic if and only if $$\left[X,T_i^{j}(P_i(X))+T_{j}^{i}(P_j(X))\right]_{\mathfrak{n}}=0,\ i,j=1,...,r$$ which holds true because $\left[X,T_i^{j}(P_i(X))+T_{j}^{i}(P_j(X))\right]\in\left[\mathfrak{s},\mathfrak{s}\right]=\{0\},\ i,j=1,...,r.$ On the other hand, if $X\in\mathfrak{m}$ then $X_{\mathfrak{s}}=P_i(X)=0,\ i=1,...,r,$ and it is equigeodesic on $G/K_1$ if and only if $$\left[X,\tilde{P}_i(X)\right]_{\mathfrak{n}}=0,\ i=1,...,s.$$ Suppose that $X\in\mathfrak{m}$ and that $X$ is equigeodesic on $G/K,$ then $$\left[X,\tilde{P}_i(X)\right]_{\mathfrak{m}}=0,\ i=1,...,s.$$ Since $$\left[X,\tilde{P}_i(X)\right]_{\mathfrak{n}}=\left[X,\tilde{P}_i(X)\right]_{\mathfrak{s}}+\left[X,\tilde{P}_i(X)\right]_{\mathfrak{m}},$$ then we only have to prove that $$\left[X,\tilde{P}_i(X)\right]_{\mathfrak{s}}=0,\ i=1,...,s.$$ Given that $$\tilde{P}_i(X)\in\mathfrak{m}_i=\bigoplus\limits_{\begin{subarray}{c}\alpha\in R^+_M\\\kappa(\alpha)=\xi_i\end{subarray}}\left(\mathbb{R}A_\alpha\oplus\mathbb{R}S_\alpha\right),$$ then there exists a family $\{x_\alpha,\ y_\alpha:\alpha\in R_M^+\ \text{and}\  \kappa(\alpha)=\xi_i\}\subseteq\mathbb{R}$ such that $$\tilde{P}_i(X)=\sum\limits_{\begin{subarray}{c}\alpha\in R^+_M\\\kappa(\alpha)=\xi_i\end{subarray}}\left(x_\alpha A_\alpha+y_\alpha S_\alpha\right),\ i=1,...,s.$$ Therefore, we have
        \begin{align}\label{P:i:P:j}
        \begin{split}    \left[\tilde{P}_i(X),\tilde{P}_j(X)\right] = &\left[\sum\limits_{\begin{subarray}{c}\alpha\in R^+_M\\\kappa(\alpha)=\xi_i\end{subarray}}\left(x_\alpha A_\alpha+y_\alpha S_\alpha\right),\sum\limits_{\begin{subarray}{c}\beta\in R^+_M\\\kappa(\beta)=\xi_j\end{subarray}}\left(x_\beta A_\beta+y_\beta S_\beta\right)\right]\\
            \\
            =&\sum\limits_{\begin{subarray}{c}\alpha\in R^+_M\\\kappa(\alpha)=\xi_i\end{subarray}}\sum\limits_{\begin{subarray}{c}\beta\in R^+_M\\\kappa(\beta)=\xi_j\end{subarray}}\left(x_\alpha x_\beta\left[A_\alpha,A_\beta\right]+x_\alpha y_\beta\left[A_\alpha,S_\beta\right]\right)\\
            \\
            &+\sum\limits_{\begin{subarray}{c}\alpha\in R^+_M\\\kappa(\alpha)=\xi_i\end{subarray}}\sum\limits_{\begin{subarray}{c}\beta\in R^+_M\\\kappa(\beta)=\xi_j\end{subarray}}\left(x_\beta y_\alpha\left[A_\alpha,S_\beta\right]+y_\alpha y_\beta\left[S_\alpha,S_\beta\right]\right).
        \end{split}
        \end{align}
    The elements $A_\alpha,\ S_\alpha,\ \sqrt{-1}H_\alpha$ satisfy the Lie bracket relations:
    
    $$\begin{array}{ll}
        \left[\sqrt{-1}H_{\alpha},A_\beta\right]=\beta(H_\alpha)S_\beta, & \left[A_\alpha,A_\beta\right]=m_{\alpha,\beta}A_{\alpha+\beta}+m_{-\alpha,\beta}A_{\alpha-\beta},\ \alpha\neq\beta, \\
        \\
        \left[\sqrt{-1}H_{\alpha},S_\beta\right]=-\beta(H_\alpha)A_\beta, & \left[S_\alpha,S_\beta\right]=-m_{\alpha,\beta}A_{\alpha+\beta}-m_{\alpha,-\beta}A_{\alpha-\beta},\ \alpha\neq\beta,\\
        \\
        \left[A_\alpha,S_\alpha\right]=2\sqrt{-1}H_\alpha, & \left[A_\alpha,S_\beta\right]=m_{\alpha,\beta}S_{\alpha+\beta}+m_{\alpha,-\beta}S_{\alpha-\beta},\ \alpha\neq\beta,
        \end{array}$$
        
        for some real constants $m_{\alpha,\beta},\ \alpha,\beta\in R$ such that  $m_{\alpha,\beta}=0$ if $\alpha+\beta$ is not a root, $-m_{\alpha,\beta}=m_{-\alpha,-\beta},$ and $m_{\beta,\alpha}=-m_{\alpha,\beta}.$ Since $\mathfrak{s}$ is spanned by elements of the form $\sqrt{-1}H_\gamma$ ($\gamma\in R),$ then, according to \eqref{P:i:P:j}, $$\left[\tilde{P}_j(X),\tilde{P}_i(X)\right]_{\mathfrak{s}}=0$$ unless $$\{\alpha\in R_M^+:\kappa(\alpha)=\xi_i\}\cap\{\beta\in R_M:\kappa(\beta)=\xi_j\}\neq\emptyset,$$ that is, $\left[\tilde{P}_j(X),\tilde{P}_i(X)\right]_{\mathfrak{s}}= 0$ unless $i=j.$ Obviously, $\left[\tilde{P}_i(X),\tilde{P}_i(X)\right]_{\mathfrak{s}}= 0,$ hence
        \begin{align*}
            \left[X,\tilde{P}_i(X)\right]_{\mathfrak{s}}=\left[\sum\limits_{j=1}^s\tilde{P}_j(X),\tilde{P}_i(X)\right]_{\mathfrak{s}}=\sum\limits_{j=1}^s\left[\tilde{P}_j(X),\tilde{P}_i(X)\right]_{\mathfrak{s}}=0.
        \end{align*}
        This completes the proof of $(1).$\\
        
        Let us prove $(2)$. If $\mathfrak{m}_i$ is a two-dimensional $\Ad(K)$-irreducible submodule of $\mathfrak{m},$ then $\mathfrak{m}_i$ is $\Ad(K_1)$-reducible and we can set $$\mathfrak{n}_1^{i}=\mathbb{R}A_{\gamma_i}\ \text{and}\ \mathfrak{n}_2^{i}=\mathbb{R}S_{\gamma_i},$$ for some $\gamma_i\in R_M^+$ such that $\kappa(\gamma_i)=\xi_i.$ Moreover, $$T_{i1}:A_{\gamma_i}\mapsto S_{\gamma_i},\ \text{and}\ T_{i2}:S_{\gamma_i}\mapsto A_{\gamma_i}$$ are intertwining operators preserving $(\cdot,\cdot)$ such that $\left(T_{i1}\right)^{-1}=T_{i2}$ (see \cite[Lemma 2 and Remark 2]{AWZ}).\\
        
        Let $X\in\mathfrak{n}$ be an equigeodesic vector on $G/K_1$. By Lemma \ref{lemma:4:3}, we already know that either $X\in\mathfrak{s},$ or $X\in\mathfrak{m}$ and $X$ is equigeodesic on  $G/K.$ Suppose that $X\in\mathfrak{m}$ and $X$ is equigeodesic on $G/K.$ Let us write %$$X=\sum\limits_{i=1}^{r'}\left(x_{\gamma_i}A_{\gamma_i}+y_{\gamma_i}S_{\gamma_i}\right)+\sum\limits_{i=r'+1}^s\sum\limits_{\begin{subarray}{c}\alpha\in R^+_M\\\kappa(\alpha)=\xi_i\end{subarray}}\left(x_\alpha A_\alpha+y_\alpha S_\alpha\right),$$ so that 
        $$P_{i1}(X)=x_{\gamma_i} A_{\gamma_i},\ P_{i2}(X)=y_{\gamma_i} S_{\gamma_i},\ i=1,...,r'.$$ 
        %and $$\tilde{P}_i(X)=\sum\limits_{\begin{subarray}{c}\alpha\in R^+_M\\\kappa(\alpha)=\xi_i\end{subarray}}\left(x_\alpha A_\alpha+y_\alpha S_\alpha\right),\ i=r'+1,...,s$$ 
        (see \eqref{projections}). 
        The equations \eqref{main:formula} for $X$ can be rewritten as follows:
        \begin{align*}
            &\left[X,x_{\gamma_i}A_{\gamma_i}\right]_{\mathfrak{n}}=\left[X,y_{\gamma_i}S_{\gamma_i}\right]_{\mathfrak{n}}=0,\ i=1,...,r',\\
            &\left[X,x_{\gamma_i}S_{\gamma_i}+y_{\gamma_i}A_{\gamma_i}\right]_{\mathfrak{n}}=0,\ i=1,...,r'.\\
            &\left[X,\tilde{P}_i(X)\right]_{\mathfrak{n}}=0,\ i=r'+1,...,s.
        \end{align*}
        In particular, we have
        \begin{align*}
            &x_{\gamma_i}\left[X,A_{\gamma_i}\right]_{\mathfrak{n}}=y_{\gamma_i}\left[X,S_{\gamma_i}\right]_{\mathfrak{n}}=0,\\
            &x_{\gamma_i}\left[X,S_{\gamma_i}\right]_{\mathfrak{n}}+y_{\gamma_i}\left[X,A_{\gamma_i}\right]_{\mathfrak{n}}=0,\ i=1,...,r'.
        \end{align*}
        But
        \begin{align*}
            x_{\gamma_i}\left[X,A_{\gamma_i}\right]_{\mathfrak{n}}=0\Longrightarrow\ & x_{\gamma_i}^2\left[X,A_{\gamma_i}\right]_{\mathfrak{n}}=0,\\
            y_{\gamma_i}\left[X,S_{\gamma_i}\right]_{\mathfrak{n}}=0\Longrightarrow\ & y_{\gamma_i}^2\left[X,S_{\gamma_i}\right]_{\mathfrak{n}}=0,
        \end{align*}
        and
        \begin{align*}
            x_{\gamma_i}\left[X,S_{\gamma_i}\right]_{\mathfrak{n}}+y_{\gamma_i}\left[X,A_{\gamma_i}\right]_{\mathfrak{n}}=0\Longrightarrow\ &x_{\gamma_i}^2\left[X,S_{\gamma_i}\right]_{\mathfrak{n}}+y_{\gamma_i}x_{\gamma_i}\left[X,A_{\gamma_i}\right]_{\mathfrak{n}}=0\\
            \Longrightarrow\ &x_{\gamma_i}^2\left[X,S_{\gamma_i}\right]_{\mathfrak{n}}=0,\\
            x_{\gamma_i}\left[X,S_{\gamma_i}\right]_{\mathfrak{n}}+y_{\gamma_i}\left[X,A_{\gamma_i}\right]_{\mathfrak{n}}=0\Longrightarrow\ &x_{\gamma_i}y_{\gamma_i}\left[X,S_{\gamma_i}\right]_{\mathfrak{n}}+y_{\gamma_i}^2\left[X,A_{\gamma_i}\right]_{\mathfrak{n}}=0\\
            \Longrightarrow\ &y_{\gamma_i}^2\left[X,A_{\gamma_i}\right]_{\mathfrak{n}}=0,\\
        \end{align*}
        hence
        \begin{align*}
            &\left(x_{\gamma_i}^2+y_{\gamma_i}^2\right)\left[X,A_{\gamma_i}\right]_{\mathfrak{n}}=x_{\gamma_i}^2\left[X,A_{\gamma_i}\right]_{\mathfrak{n}}+y_{\gamma_i}^2\left[X,A_{\gamma_i}\right]_{\mathfrak{n}}=0\\
            &\left(x_{\gamma_i}^2+y_{\gamma_i}^2\right)\left[X,S_{\gamma_i}\right]_{\mathfrak{n}}=x_{\gamma_i}^2\left[X,S_{\gamma_i}\right]_{\mathfrak{n}}+y_{\gamma_i}^2\left[X,S_{\gamma_i}\right]_{\mathfrak{n}}=0.\\
        \end{align*}
        From the above, we conclude that either $x_{\gamma_i}^2+y_{\gamma_i}^2=0,$ in which case $X_{\mathfrak{m}_i}=0,$ or $\left[X,A_{\gamma_i}\right]_{\mathfrak{n}}=\left[X,S_{\gamma_i}\right]_{\mathfrak{n}}=0,$ in which case $\left[X,\mathfrak{m}_i\right]_{\mathfrak{n}}=0.$ Summarizing, we proved that if $X\in\mathfrak{n}$ is equigeodesic on $G/K_1$ then either $X\in\mathfrak{s},$ or $X\in\mathfrak{m},$ $X$ is equigeodesic on $G/K$ and either $X_{\mathfrak{m}_i}=0$ or $\left[X,\mathfrak{m}_i\right]_{\mathfrak{n}}=0,$ for each $\Ad(K)$-irreducible submodule $\mathfrak{m}_i$ of dimension two.
        
        For the reverse implication, we can proceed as in the proof of the reverse implication of $(1).$ This completes the proof of $(2).$
        \end{proof}
        \begin{remark} Theorem \ref{equigeodesic:M:spaces} provides a characterization of equigeodesic vectors within certain class of $M$-spaces. It is important to note that this characterization depends on determining whether a vector is equigeodesic on the corresponding generalized flag manifold. For a comprehensive understanding of equigeodesic vectors on generalized flag manifolds, we refer to \cite{CGN,S2,WZ,X}.
        \end{remark}

\section*{Acknowledgements}
Brian Grajales is supported by grant 2023/04083-0 (São Paulo Research Foundation FAPESP). Lino Grama is partially supported by FAPESP grant 2018/13481-0.

\bibliographystyle{apa}

\end{document}